%% file: main.tex
\documentclass[a4paper,english,11pt]{article}
\input{preamble}

\title{Routing in Line Networks with Handling Times}
\author[1]{Gabriel Deza\thanks{\href{mailto:gabrieldeza@mail.tau.ac.il}{gabrieldeza@mail.tau.ac.il}}}
\author[1]{Michal Tzur}
\author[1]{Tal Raviv}
\affil[1]{School of Industrial and Intelligent Systems Engineering\protect\newline Tel Aviv University, Tel Aviv, Israel}
\date{October 1, 2026}
\begin{document}
\maketitle
\begin{abstract}
\input{sections/abstract}
\end{abstract}
\noindent\textbf{Keywords:} Urban parcel delivery, network flow, handling times, non-linear programming.

\input{sections/01-introduction}
\input{sections/02-literature-review}
\input{sections/03-problem-description}
\input{sections/04-primal-dual-certificates}
\input{sections/05-convex-frontier-bound}
\input{sections/06-computational-results}
\input{sections/07-conclusion}

\section*{Acknowledgments}
This research was supported by an Israel Science Foundation (ISF) grant no. 1460/20.

\bibliographystyle{plainnat}
\bibliography{cit}

\clearpage
\appendix
\numberwithin{equation}{section}
\numberwithin{figure}{section}
\numberwithin{table}{section}
\input{appendices}
\end{document}

%% file: preamble.tex
\usepackage[T1]{fontenc}
\usepackage[utf8]{inputenc}
\usepackage[english]{babel}
\usepackage{lmodern}
\usepackage{amsmath,amssymb,amsfonts,amsthm,mathtools,bm}
\usepackage[a4paper,left=0.85in,right=0.85in,top=0.75in,bottom=0.85in]{geometry}
\usepackage{setspace}
\usepackage{authblk}
\usepackage{microtype}
\usepackage{algorithm,algpseudocode}
\usepackage{graphicx,float,adjustbox}
\usepackage{booktabs,array,multirow,enumitem}
\usepackage{caption,subcaption}
\usepackage{xcolor}
\usepackage{tikz,pgfplots,csvsimple,siunitx,csquotes}
\usetikzlibrary{arrows.meta,positioning,calc,automata,shapes.geometric}
\pgfplotsset{compat=1.18}
\usepackage{natbib}
\bibpunct[, ]{(}{)}{,}{a}{}{,}

\newtheorem{theorem}{Theorem}
\newtheorem{proposition}{Proposition}
\theoremstyle{remark}
\newtheorem{remark}{Remark}
\usepackage[hyphens]{url}
\usepackage[colorlinks=true,linkcolor=blue!45!black,citecolor=green!35!black,urlcolor=blue!55!black]{hyperref}
\hypersetup{pdftitle={Routing in Line Networks with Handling Times},pdfauthor={Gabriel Deza, Michal Tzur, Tal Raviv}}
\newcommand{\ptnimg}[1]{%
  \adjustbox{max width=\linewidth, max height=6cm, center}{%
    \includegraphics{#1}%
  }%
}

\newcommand{\Xset}{\mathcal{X}}

\definecolor{ArcOrig}{RGB}{228,26,28}      %
\definecolor{ArcNoDelta}{RGB}{55,126,184}    %
\definecolor{socp}{RGB}{77,175,74}    %
\definecolor{under}{RGB}{152,78,163} %
\colorlet{colorArcOrig}{ArcOrig}
\colorlet{colorArcNoDelta}{ArcNoDelta}
\colorlet{colorsocp}{socp}
\colorlet{colorunder}{under}

\tikzset{
  lbOrig/.style={dash pattern=on 7pt off 3pt},                    %
  lbNoDelta/.style={dash pattern=on 4pt off 2pt},                    %
  lbBase/.style={dash pattern=on 2pt off 2pt},                    %
  lbSOCP/.style={dash pattern=on 6pt off 2pt on 1pt off 2pt},     %
  lbUnder/.style={dash pattern=on 6pt off 2pt on 1pt off 2pt on 1pt off 2pt}, %
}

\newcommand{\CSVGraphPanel}[3]{%
  \def\S{1.5} \def\pad{0.35} \def\head{0.4}%
  \begin{scope}
    \draw[black] ({-\pad},{-\pad}) rectangle ({\S+\pad},{\S+\pad});
    \fill[gray!35] ({-\pad},{\S+\pad}) rectangle ({\S+\pad},{\S+\pad+\head});
    \draw[black] ({-\pad},{\S+\pad}) rectangle ({\S+\pad},{\S+\pad+\head});
    \node[font=\bfseries] at ({\S/2},{\S+\pad+0.18}) {#3};
    \csvreader[head to column names]{#1}{}{%
      \pgfmathsetmacro{\X}{(\x/60.0)*\S}%
      \pgfmathsetmacro{\Y}{(\y/60.0)*\S}%
      \coordinate (n\id) at (\X,\Y);}
    \csvreader[head to column names]{#2}{}{%
      \draw[thick] (n\source) -- (n\target);}
    \csvreader[head to column names]{#1}{}{%
      \fill (n\id) circle[radius=2pt];
      \node[font=\scriptsize, xshift=5pt, yshift=5pt] at (n\id) {\id};
    }
  \end{scope}%
}
\newif\ifcsvplotflipy

\pgfkeys{
  /csvplot/.is family, /csvplot,
  width/.initial=6,        %
  xmax/.initial=60,        %
  ymax/.initial=60,        %
  xticks/.initial=5,       %
  yticks/.initial=5,       %
  flipy/.is if=csvplotflipy, flipy/.default=true
}
\newcommand{\PlotCSVGraphWithAxes}[4][]{%
  \pgfkeys{/csvplot, #1}%
  \pgfmathsetmacro{\W}{\pgfkeysvalueof{/csvplot/width}}
  \pgfmathsetmacro{\XMAX}{\pgfkeysvalueof{/csvplot/xmax}}
  \pgfmathsetmacro{\YMAX}{\pgfkeysvalueof{/csvplot/ymax}}
  \pgfmathsetmacro{\NX}{\pgfkeysvalueof{/csvplot/xticks}}
  \pgfmathsetmacro{\NY}{\pgfkeysvalueof{/csvplot/yticks}}
  \pgfmathsetmacro{\H}{\W * (\YMAX/\XMAX)}
  \pgfmathsetmacro{\sx}{\W/\XMAX}
  \pgfmathsetmacro{\sy}{\H/\YMAX}

  \begin{scope}
    \draw[black] (0,0) rectangle (\W,\H);

    \draw[black,-] (0,0) -- (\W,0) node[below] {};
    \draw[black,-] (0,0) -- (0,\H) node[left]  {};

    \pgfmathsetmacro{\dx}{\XMAX/\NX}
    \foreach \i in {0,...,\NX}{
      \pgfmathsetmacro{\xdata}{\i*\dx}
      \pgfmathsetmacro{\xplot}{\xdata*\sx}
      \draw[black] (\xplot,0) -- ++(0,-0.08);
      \node[below] at (\xplot,-0.08) {\pgfmathprintnumber{\xdata}};
    }

    \pgfmathsetmacro{\dy}{\YMAX/\NY}
    \foreach \j in {0,...,\NY}{
      \pgfmathsetmacro{\ydata}{\j*\dy}
      \pgfmathsetmacro{\yplotraw}{\ydata*\sy}
      \pgfmathsetmacro{\yplot}{\ifcsvplotflipy \H-\yplotraw \else \yplotraw \fi}
      \draw[black] (0,\yplot) -- ++(-0.08,0);
      \node[left] at (-0.08,\yplot) {\pgfmathprintnumber{\ydata}};
    }

    \ifx&#4&\else
      \def\head{0.4}
      \fill[gray!35] (0,\H) rectangle (\W,\H+\head);
      \draw[black] (0,\H) rectangle (\W,\H+\head);
      \node[font=\bfseries] at (\W/2,\H+\head/2) {#4};
    \fi

    \csvreader[head to column names]{#2}{}{%
      \pgfmathsetmacro{\X}{\x * \sx}%
      \pgfmathsetmacro{\Yraw}{\y * \sy}%
      \pgfmathsetmacro{\Y}{\ifcsvplotflipy \H - \Yraw \else \Yraw \fi}%
      \coordinate (n\id) at (\X,\Y);
    }%

    \csvreader[head to column names]{#3}{}{%
      \draw[thick] (n\source) -- (n\target);
    }%

    \csvreader[head to column names]{#2}{}{%
      \fill (n\id) circle[radius=2.5pt];
      \node[xshift=7pt, yshift=7pt] at (n\id) {\id};
    }%
  \end{scope}
}

\pgfdeclareplotmark{mystar}{
  \node[
    star, star point ratio=2.55,
    minimum size=7pt, inner sep=0pt,
    draw=black, line width=0.6pt,
    fill=black
  ] {};
}

\pgfdeclareplotmark{mystarblue}{
  \node[
    star, star point ratio=2.55,
    minimum size=7pt, inner sep=0pt,
    draw=black, line width=0.6pt,
    fill=black
  ] {};
}

%% file: sections/abstract.tex
Motivated by hyperconnected urban parcel networks for same-day and time-sensitive delivery, we study the fixed-line routing problem that arises when evaluating a candidate set of recurring inter-hub vehicle services. Vehicles follow predefined cyclic sequences of service points, or lines, and parcels may transfer between lines. Such transfers expand path choice but require loading and unloading operations that make vehicles dwell at service points. Because handled volumes are routing-dependent, dwell times become endogenous, linking routing decisions to line cycle times, effective service frequencies, and transfer waiting times.

We introduce the Routing in Line Networks with Handling Times (RLN-HT) problem and formulate it as a bilinear generalized capacity-flow assignment model. We characterize optimal dwell times and the resulting frequencies as functions of handling workloads, enabling a reformulation solely in terms of commodity flows. We exploit its convex substructure to derive two second-order conic relaxations that preserve the original feasible region and therefore yield feasible incumbent solutions along with valid lower bounds. We also develop a residual-frontier framework that characterizes the tradeoff between the convex relaxation objective and the remaining nonconvex residual, yielding a general mechanism for constructing lower-bound certificates. Computational experiments show that ignoring handling times can yield poor or infeasible routing, whereas accounting for endogenous dwell times can improve average parcel travel times by up to 26.5\%. In instances with up to 257 nodes and 25 lines, the relaxations solve quickly, yield near-optimal routing, and obtain small optimality gaps, whereas off-the-shelf spatial branch-and-bound struggles to produce useful bounds.

%% file: sections/01-introduction.tex
\section{Introduction}
\label{sec:Introduction}
Integrated logistics service providers have traditionally organized parcel and freight movements through multi-echelon hierarchical hub networks. Parcels are typically routed upward through successive echelons of consolidation hubs and then downward through successive echelons of distribution hubs before reaching the recipient. Transportation between adjacent echelons is typically performed by multi-trip vehicle routes with backhauls, such as trucks moving parcels between a regional gateway hub and local hubs, or couriers serving customers from a local hub for first- and last-mile pickup and delivery. Fig.~\ref{fig:intro_hierarchy} illustrates this structure for a two-echelon system, where the filled and hollow squares denote a parcel's origin and destination, respectively, and the highlighted paths shows the route taken.

These hierarchical networks achieve consolidation at scale by limiting path choice and routing parcels upward and downward through the hierarchy, so many flows share long intermediate legs. However, this vertical routing can be too restrictive for time-sensitive intracity flows, where sender and recipient are in the same city or service region and routing through a higher-level hub adds unnecessary diversion. A natural alternative is to allow more lateral movement among hubs within the same echelon. For instance, SF Express concluded that transshipment through regional gateway hubs would not achieve the company’s six-hour delivery goal and instead operate a same-day network among local hubs \citep{huang2026sfexpress}. In this paper, we propose a framework for high-volume urban parcel delivery services that leverage line networks to address challenges companies like SF face, as well as the practical considerations of handling times. More broadly, this shift to lateral movements among hubs is aligned with Physical Internet and hyperconnected city-logistics proposals, in which parcels move through meshed inter-hub networks rather than a purely hierarchical hub-and-spoke structure \citep{montreuil2018urbanparcel}.

\input{graphics/intro_network_concepts.tex}

In networks with lateral movements, each hub may simultaneously generate, receive, and relay parcels, creating a many-to-many operating environment in which the appropriate vehicle-routing structure is less obvious. An extreme is to operate a fully flexible pickup-and-delivery system, where vehicles are dynamically assigned and routed as requests arise, as illustrated in Fig~\ref{fig:intro_flexible}. While flexible, this approach shifts complexity to real-time dispatch and can produce volatile utilization, unstable schedules, and unreliable handoffs because operations follow demand too closely. A more structured alternative is to operate repeatable primary routes, forming what is often called a \emph{line network} \citep{ido_tal}. In such a network, vehicles repeatedly operate \emph{lines}, each defined as a sequence of hubs where parcels may be loaded, unloaded, or transferred throughout the day. Temporary storage at hubs allows parcels to move through the network over multiple legs. This structure improves operational stability by fixing vehicle movements in advance, building route familiarity for drivers, and reducing the need for direct routes between every pair of hubs. Fig~\ref{fig:intro_lines} illustrates this line network structure.

Designing such a network requires selecting these vehicle lines, and often their service frequencies, making the problem closely related to line-planning in public transit and liner shipping. As discussed in Section~\ref{sec:lit_review}, jointly optimizing vehicle lines and demand routes is difficult. Most approaches, therefore, evaluate many candidate line networks, where each fixed line network requires solving a \emph{parcel routing} or \emph{assignment subproblem}, the focus of this work.

For parcel delivery, a key challenge compared with many standard assignment and routing problems in public transit and liner shipping is that \emph{handling times} at hubs cannot be treated as negligible. In public transit, boarding and alighting times are often treated as fixed or absorbed into schedules. In liner shipping, port times are largely governed by terminal and berth operations and are often handled outside the cargo-assignment model. By contrast, the lower-echelon urban hubs that motivate our setting are smaller and less optimized for loading, unloading, and sorting than higher-echelon consolidation hubs, especially in dense urban environments. As a result, vehicle dwell times are not simply positive constant slacks for handling, but functions of the total handled flow per hub visit.

Importantly, vehicle dwell times depend on the routing solution through the number of parcels loaded and unloaded at each hub, while the routing solution itself depends on dwell times through parcel travel times. Longer dwell times provide more handling capacity, but they also slow the line, delay parcels already on board, and increase waiting times for parcels that need to board or transfer. While one could fix dwell times a priori, such a heuristic will likely yield suboptimal, potentially infeasible parcel routing, since appropriate dwell values depend on the routing-induced handling volumes. Here, infeasibility means that the routing exceeds the available handling or effective vehicle capacity over the planning period. A routing subproblem should therefore jointly optimize dwell times and parcel routes, as illustrated in the following example.

To demonstrate these relationships, consider the network in Figure~\ref{fig:toy_network}. It contains two \(A \rightarrow B \rightarrow A\) lines, shown as a solid red line and a dashed blue line, with base cycle times of \(2\) and \(3\) time units, respectively. There are \(\lambda=140\) parcels traveling from \(A\) to \(B\) over a period of length \(T=60\), which can be split between the two lines. Each line has one vehicle, capacity is nonbinding, and loading and unloading times are equal, with \(\alpha=\beta=0.1\), respectively. Let \(s_{\text{red}}\) denote the fraction routed on the red line, so that \(1-s_{\text{red}}\) is routed on the blue line. Let \(\Delta_i^l\) denote the dwell time of line \(l\) at hub \(i\) per visit. The effective cycle times are therefore \(2+\Delta_A^{\mathrm{red}}+\Delta_B^{\mathrm{red}}\) for the red line and \(3+\Delta_A^{\mathrm{blue}}+\Delta_B^{\mathrm{blue}}\) for the blue line. In this illustrative example, dwell times are computed from the temporal-capacity relation used later in the model. Over the period, the total handling workload assigned to a line must equal its dwell time per visit multiplied by the number of visits.

A parcel routed on the red line from $A$ to $B$ has three travel-time components. It first waits to board at hub \(A\), with expected waiting time \(\frac{2+\Delta_A^{\mathrm{red}}+\Delta_B^{\mathrm{red}}}{2}\). It then spends one time unit traveling to hub \(B\). Finally, it incurs unloading time at \(B\), represented conservatively by \(\Delta_B^{\mathrm{red}}\). Thus, the cost of routing a parcel on the red line depends on \(\Delta_A^{\mathrm{red}}\) and \(\Delta_B^{\mathrm{red}}\), but these dwell times are themselves determined by the routing decision \(s_{\text{red}}\). In this case, the red and blue lines carry \(140s_{\text{red}}\) and \(140(1-s_{\text{red}})\) parcels, respectively.

\begin{figure}[h]
    \centering
    \begin{subfigure}[b]{0.4\textwidth}
        \centering
        \raisebox{16mm}[0pt][0pt]{%
              \begin{tikzpicture}[>=Stealth, node distance=2.5cm, thick]
                \node[circle, draw=black, thick, minimum size=0.5cm] (A) {\large $A$};
                \node[circle, draw=black, thick, minimum size=0.5cm, right of=A] (B) {\large $B$};
                \draw[->, red, line width=1.4pt] (A) to[bend left=0] node[pos=0.5, above=0pt] {$1$} (B);
                \draw[->, red, line width=1.4pt] (B) to[bend left=0] node[pos=0.5, below=0pt] {} (A);
                \draw[->, dashed,blue, line width=1.4pt] (A) to[bend left=40] node[pos=0.5, above, blue] {$1.5$} (B);
                \draw[->, dashed,blue, line width=1.4pt] (B) to[bend left=-40] node[pos=0.5, below, blue] {} (A);
              \end{tikzpicture}%
         }
        \caption{Two-line network.}
        \label{fig:toy_network}
    \end{subfigure}
    \hfill
    \begin{subfigure}[b]{0.55\textwidth}
        \centering
        \begin{tikzpicture}
          \begin{axis}[
            width=\linewidth,
            height=0.5\linewidth,
            xlabel={$s_{\text{red}}$},
            ylabel={aTT},
            grid=both,
            grid style={dashed, opacity=0.35},
            legend cell align=left,
            legend style={font=\scriptsize, at={(1,1)}, anchor=north east, fill=white, draw=black!20},
            unbounded coords=discard,
            ymin=0,
            ymax=6,
            ytick={0,1,2,3,4,5,6},
            minor y tick num=1,
            every axis plot/.append style={mark=none, line width=1.2pt},
          ]
            \addplot[black, thick]
              table[col sep=comma, x=s, y=actual_travel_time]{graphics/toy_example_CSVs/main_series.csv};
            \addlegendentry{$\alpha=\beta=0.1$}
            \addplot[black, thick, dashed]
              table[col sep=comma, x=s, y=expected_travel_time]{graphics/toy_example_CSVs/main_series.csv};
            \addlegendentry{$\alpha=\beta=0$}
            \addplot[only marks,mark=mystar] coordinates {(1,2)};
            \node[above, text=black] at (axis cs:1,2) {$s^{*}_{\text{red}} = 1$};
            \addplot[only marks, mark=mystarblue] coordinates {(0.6659623,3.1196918)};
            \node[below, text=black] at (axis cs:0.6659623,2.1196918) {$s^{*}_{\text{red}} = 0.67$};
          \end{axis}
        \end{tikzpicture}
        \caption{aTT as a function of $s_{\text{red}}$.}
        \label{fig:opt_policy}
    \end{subfigure}

    \caption{Toy two-line network and average travel time under different routing splits.}
    \label{fig:toy_combined}
\end{figure}

Figure~\ref{fig:opt_policy} shows the average travel time (aTT) as a function of \(s_{\text{red}}\) when handling is absent \((\alpha=\beta=0)\) and when handling is present \((\alpha=\beta=0.1)\), shown by the dashed and solid lines, respectively. If the routing policy ignores handling, it solves the problem with \(\alpha=\beta=0\) and sends all demand on the red line, since the red line has the shorter base cycle time. When this same routing is evaluated with handling present, the red line becomes overloaded. Additional loading and unloading increase \(\Delta_A^{\mathrm{red}}\) and \(\Delta_B^{\mathrm{red}}\), lengthen the effective cycle time, and reduce frequency. For the parameter values in the figure, the handling-aware objective is minimized by splitting demand, with \(s_{\text{red}}^{*}=0.67\), avoiding excessive dwell times on the seemingly shortest path.

As a result, our work focuses on the parcel-routing subproblem that arises after a candidate line network has been fixed. We develop an \emph{oracle} for this subproblem using a fluid approximation that evaluates the mean delivery time under system-optimal routing while accounting for handling-induced dwell times. The oracle returns high-quality solutions quickly on large instances and certifies their quality with small optimality gaps. To this end, our work makes the following main contributions:
\begin{itemize}
    \item We motivate and formalize line networks in which vehicles repeatedly traverse predefined cyclic routes as a middle-echelon operating concept for high-volume urban parcel delivery, sitting between hierarchical hub routing and fully flexible pickup-and-delivery while allowing lateral parcel movement among local hubs.
    \item We introduce the \emph{Routing in Line Networks with Handling Times} (RLN-HT) problem as a fixed-line parcel-routing oracle that captures handling-induced dwell times, effective service frequencies, and vehicle capacity.
    \item We formulate RLN-HT as a bilinear generalized capacity-and-flow assignment problem and use optimality conditions to obtain a pure multicommodity flow reformulation that is nonconvex and nonseparable, but exposes exploitable convex structure.
    \item We derive two second-order conic relaxations that preserve the routing feasible region, return feasible parcel routings, and provide fast lower-bound certificates.
    \item We develop a residual-frontier framework for convex-plus-residual routing problems and use it to obtain substantially stronger lower-bound certificates for RLN-HT.
    \item We show computationally that ignoring handling times can produce poor or infeasible routing solutions, while the proposed oracle scales to large instances, returns high-quality routings quickly, and certifies small optimality gaps.
\end{itemize}

The remainder of this paper is organized as follows. Section~\ref{sec:lit_review} reviews the related literature. Section~\ref{sec:methods} introduces RLN-HT and its pure-flow reformulation. Sections~\ref{sec:primal_dual_certificates} and~\ref{sec:convex_frontier_bound} develop primal and dual certificates. Section~\ref{sec:num_res} presents computational results, and Section~\ref{sec:conclusion} concludes.

%% file: graphics/intro_network_concepts.tex
\begin{figure}[t]
\centering
\begingroup
\definecolor{introGHFill}{RGB}{219,91,91}
\definecolor{introGHDraw}{RGB}{143,29,29}
\definecolor{introLHFill}{RGB}{142,207,136}
\definecolor{introLHDraw}{RGB}{35,106,58}
\definecolor{introGray}{RGB}{186,194,188}
\definecolor{introBlue}{RGB}{68,116,185}
\definecolor{introOrange}{RGB}{211,130,43}
\definecolor{introPurple}{RGB}{126,87,194}
\definecolor{introTeal}{RGB}{45,139,87}
\definecolor{introParcelFill}{RGB}{232,190,112}
\definecolor{introParcelDraw}{RGB}{145,96,39}
\definecolor{introParcelPath}{RGB}{122,36,28}

\tikzset{
  introPanel/.style={x=0.34cm,y=0.34cm,>=Stealth},
  introLH/.style={
    circle,
    draw=introLHDraw,
    fill=introLHFill,
    line width=0.45pt,
    minimum size=4.2pt,
    inner sep=0pt
  },
  introGH/.style={
    rectangle,
    draw=introGHDraw,
    fill=introGHFill,
    line width=0.55pt,
    minimum size=6.5pt,
    inner sep=0.8pt,
    font=\scriptsize\bfseries
  },
  introRoute/.style={line width=0.9pt, rounded corners=1pt},
  introRouteEnd/.style={
    -{Stealth[length=3.0pt,width=3.4pt]},
    line width=0.9pt,
    rounded corners=1pt,
    dashed,
    dash pattern=on 2.8pt off 1.6pt,
    shorten >=2.2pt
  },
  introFlex/.style={-{Stealth[length=4.2pt,width=4.8pt]}, densely dashed, line width=0.95pt},
  introDispatch/.style={
    -{Stealth[length=4.2pt,width=4.8pt]},
    densely dashed,
    line width=0.95pt,
    shorten >=2.2pt,
    shorten <=7.0pt
  },
  introDelivery/.style={
    -{Stealth[length=3.6pt,width=4.0pt]},
    line width=0.9pt,
    rounded corners=1.1pt,
    shorten >=2.2pt,
    shorten <=2.2pt
  },
  introMetro/.style={
    line width=1.55pt,
    line cap=round,
    line join=round,
    rounded corners=3.5pt
  },
  introLine/.style={
    line width=1.65pt,
    line cap=round,
    line join=round
  },
  introLineStation/.style={
    circle,
    draw=introLHDraw,
    fill=introLHFill,
    line width=0.45pt,
    minimum size=5.8pt,
    inner sep=0pt
  },
  introTruck/.pic={
    \draw[pic actions, rounded corners=0.5pt] (-3.1,-1.05) rectangle (0.8,1.05);
    \draw[pic actions, rounded corners=0.5pt] (0.8,-0.85) -- (2.55,-0.85) -- (3.05,-0.15) -- (2.15,1.05) -- (0.8,1.05) -- cycle;
    \draw[pic actions] (1.55,0.15) -- (2.35,0.15) -- (1.95,0.72) -- (1.25,0.72) -- cycle;
    \draw[pic actions, fill=white, line width=0.55pt] (-2.05,-1.15) circle (0.32);
    \draw[pic actions, fill=white, line width=0.55pt] (1.75,-1.15) circle (0.32);
  },
  introTruckPurpleCargo/.pic={
    \draw[pic actions, rounded corners=0.5pt] (-3.1,-1.05) rectangle (0.8,1.05);
    \draw[pic actions, rounded corners=0.5pt] (0.8,-0.85) -- (2.55,-0.85) -- (3.05,-0.15) -- (2.15,1.05) -- (0.8,1.05) -- cycle;
    \draw[pic actions] (1.55,0.15) -- (2.35,0.15) -- (1.95,0.72) -- (1.25,0.72) -- cycle;
    \filldraw[draw=introPurple!85!black, fill=introPurple!28, line width=0.35pt] (-2.35,-0.42) rectangle (-1.63,0.30);
    \filldraw[draw=introPurple!85!black, fill=introPurple!38, line width=0.35pt] (-1.63,-0.42) rectangle (-0.91,0.30);
    \filldraw[draw=introPurple!85!black, fill=introPurple!48, line width=0.35pt] (-1.99,0.30) rectangle (-1.27,1.02);
    \draw[pic actions, fill=white, line width=0.55pt] (-2.05,-1.15) circle (0.32);
    \draw[pic actions, fill=white, line width=0.55pt] (1.75,-1.15) circle (0.32);
  }
}

\newcommand{\IntroNetworkCoords}{%
  \coordinate (GH) at (5.000000,7.050000);
  \coordinate (LH1) at (1.040413,2.980979);
  \coordinate (LH2) at (5.519259,1.799496);
  \coordinate (LH3) at (1.737150,0.601582);
  \coordinate (LH4) at (0.806377,1.667238);
  \coordinate (LH5) at (7.535336,3.526217);
  \coordinate (LH6) at (9.248581,4.719448);
  \coordinate (LH7) at (0.849977,5.165978);
  \coordinate (LH8) at (8.905736,2.076669);
  \coordinate (LH9) at (7.294776,5.260988);
  \coordinate (LH10) at (6.306590,3.507102);
  \coordinate (LH11) at (2.986671,1.813660);
  \coordinate (LH12) at (4.306270,3.287407);
}

\newcommand{\IntroLocalHubs}{%
  \foreach \n in {1,...,12}{\node[introLH] at (LH\n) {};}
}

\newcommand{\IntroGateway}{%
  \node[introGH] at (GH) {};
}

\newcommand{\IntroBackgroundEdges}{%
  \foreach \a/\b in {LH1/LH4,LH1/LH7,LH1/LH11,LH1/LH12,LH2/LH3,LH2/LH5,LH2/LH8,LH2/LH10,LH2/LH11,LH2/LH12,LH3/LH4,LH3/LH11,LH4/LH7,LH4/LH11,LH5/LH6,LH5/LH8,LH5/LH9,LH5/LH10,LH6/LH8,LH6/LH9,LH7/LH12,LH9/LH10,LH9/LH12,LH10/LH12,LH11/LH12}{
    \draw[introGray!55, line width=0.35pt] (\a) -- (\b);
  }
}

\newcommand{\DrawDirectedSegment}[3]{%
  \draw[introRoute,#3] (#1) -- (#2);
  \draw[-{Stealth[length=4.2pt,width=4.8pt]}, line width=0.9pt,#3]
    ($(#1)!0.46!(#2)$) -- ($(#1)!0.58!(#2)$);
}

\newcommand{\DrawDirectedEndSegment}[3]{%
  \draw[introRouteEnd,#3] (#1) -- (#2);
}

\newcommand{\DrawRoute}[2]{%
  \foreach \a/\b in {#2}{\DrawDirectedSegment{\a}{\b}{#1}}
}

\newcommand{\DrawRouteEnd}[2]{%
  \foreach \a/\b in {#2}{\DrawDirectedEndSegment{\a}{\b}{#1}}
}

\newcommand{\IntroLineSegment}[4]{%
  \draw[introLine,#1] ($(#2)!#4!90:(#3)$) -- ($(#3)!#4!-90:(#2)$);
}

\newcommand{\IntroTruckOnSegment}[6]{%
  \path ($(#2)!#4!90:(#3)$) -- ($(#3)!#4!-90:(#2)$)
    pic[pos=#5,sloped,yshift=#6,scale=0.16,draw=#1,fill=#1!12] {introTruck};
}

\newcommand{\IntroLineStations}{%
  \foreach \s in {A,B,C,D,E,F,G,H,I,J,K,L}{\node[introLineStation] at (\s) {};}
}

\newcommand{\IntroParcelStackShift}[3]{%
  \begin{scope}[shift={($(#1)+(#2,#3)$)}]
    \filldraw[draw=introParcelDraw, fill=introParcelFill, line width=0.25pt] (0,0) rectangle (0.34,0.34);
    \filldraw[draw=introParcelDraw, fill=introParcelFill!80, line width=0.25pt] (0.34,0) rectangle (0.68,0.34);
    \filldraw[draw=introParcelDraw, fill=introParcelFill!65, line width=0.25pt] (0.17,0.34) rectangle (0.51,0.68);
  \end{scope}
}

\newcommand{\IntroParcelStack}[1]{%
  \IntroParcelStackShift{#1}{-0.78}{0.22}
}

\newcommand{\IntroParcelBoxShift}[4]{%
  \begin{scope}[shift={($(#1)+(#2,#3)$)}]
    \filldraw[draw=#4!80!black, fill=#4!42, line width=0.25pt] (0,0) rectangle (0.30,0.30);
  \end{scope}
}

\newcommand{\IntroEmptyParcelBoxShift}[4]{%
  \begin{scope}[shift={($(#1)+(#2,#3)$)}]
    \draw[draw=#4!85!black, fill=white, dashed, dash pattern=on 0.8pt off 0.5pt, line width=0.28pt] (0,0) rectangle (0.30,0.30);
  \end{scope}
}

\newcommand{\IntroLargeParcelBoxShift}[4]{%
  \begin{scope}[shift={($(#1)+(#2,#3)$)}]
    \filldraw[draw=#4!80!black, fill=#4!42, line width=0.32pt] (0,0) rectangle (0.42,0.42);
  \end{scope}
}

\newcommand{\IntroLargeEmptyParcelBoxShift}[4]{%
  \begin{scope}[shift={($(#1)+(#2,#3)$)}]
    \draw[draw=#4!85!black, fill=white, dashed, dash pattern=on 1.0pt off 0.6pt, line width=0.34pt] (0,0) rectangle (0.42,0.42);
  \end{scope}
}

\newcommand{\IntroEmptyParcelStackShift}[3]{%
  \begin{scope}[shift={($(#1)+(#2,#3)$)}]
    \draw[draw=introParcelDraw, fill=white, dashed, dash pattern=on 0.8pt off 0.5pt, line width=0.28pt] (0,0) rectangle (0.34,0.34);
    \draw[draw=introParcelDraw, fill=white, dashed, dash pattern=on 0.8pt off 0.5pt, line width=0.28pt] (0.34,0) rectangle (0.68,0.34);
    \draw[draw=introParcelDraw, fill=white, dashed, dash pattern=on 0.8pt off 0.5pt, line width=0.28pt] (0.17,0.34) rectangle (0.51,0.68);
  \end{scope}
}

\newcommand{\IntroRequestMark}[1]{%
  \node[font=\fontsize{7.7pt}{7.7pt}\selectfont\bfseries, text=introGHDraw, inner sep=0pt] at ($(#1)+(0,0.78)$) {!};
}

\begin{subfigure}[t]{0.315\linewidth}
\centering
\begin{tikzpicture}[introPanel]
  \IntroNetworkCoords
  \draw[draw=introParcelPath, opacity=0.18, line width=3.2pt, line cap=round] (LH5) -- (GH);
  \draw[draw=introParcelPath, opacity=0.18, line width=3.2pt, line cap=round] (GH) -- (LH10);
  \draw[draw=introBlue, opacity=0.18, line width=3.2pt, line cap=round] (LH12) -- (GH);
  \draw[draw=introBlue, opacity=0.18, line width=3.2pt, line cap=round] (GH) -- (LH7);
  \draw[draw=introBlue, opacity=0.18, line width=3.2pt, line cap=round] (LH7) -- (LH1);
  \draw[draw=introBlue, opacity=0.18, line width=3.2pt, line cap=round] (LH1) -- (LH4);
  \DrawRouteEnd{introBlue}{GH/LH7,LH7/LH1,LH1/LH4,LH4/LH3}
  \DrawRouteEnd{introOrange}{GH/LH9,LH9/LH6,LH6/LH8,LH8/LH5}
  \DrawRouteEnd{introPurple}{GH/LH10,LH10/LH2,LH2/LH11,LH11/LH12}
  \draw[introRouteEnd,introBlue,shorten >=4.8pt] (LH3) -- (GH);
  \draw[introRouteEnd,introOrange,shorten >=4.8pt] (LH5) -- (GH);
  \draw[introRouteEnd,introPurple,shorten >=4.8pt] (LH12) -- (GH);
  \draw[introGray!70, densely dotted, line width=0.5pt] (LH5) -- (LH10);
  \path (GH) -- (LH7) pic[pos=0.52,sloped,yshift=4.5pt,scale=0.27,xscale=-1,draw=introBlue,fill=introBlue!15] {introTruck};
  \path (LH9) -- (LH6) pic[pos=0.58,sloped,yshift=4.5pt,scale=0.27,draw=introOrange,fill=introOrange!15] {introTruck};
  \path (LH2) -- (LH11) pic[pos=0.47,sloped,yshift=4.5pt,scale=0.27,xscale=-1,draw=introPurple,fill=introPurple!15] {introTruck};
  \IntroGateway
  \IntroLocalHubs
  \IntroLargeParcelBoxShift{LH5}{-0.18}{-0.82}{introParcelPath}
  \IntroLargeEmptyParcelBoxShift{LH10}{-0.82}{-0.20}{introParcelPath}
  \IntroLargeParcelBoxShift{LH12}{-0.78}{-0.18}{introBlue}
  \IntroLargeEmptyParcelBoxShift{LH4}{-0.78}{-0.18}{introBlue}
  \path[use as bounding box] (0.0,0.1) rectangle (10.0,7.7);
\end{tikzpicture}
\caption{Hierarchical tours}
\label{fig:intro_hierarchy}
\end{subfigure}
\hfill
\begin{subfigure}[t]{0.315\linewidth}
\centering
\begin{tikzpicture}[introPanel]
  \IntroNetworkCoords
  \coordinate (truckBlueFlex) at (2.25,5.20);
  \coordinate (truckOrangeFlex) at (9.15,4.90);
  \coordinate (truckPurpleFlex) at (4.30,0.60);
  \draw[introDispatch,introBlue] (truckBlueFlex) -- (LH12);
  \draw[introDispatch,introOrange] (truckOrangeFlex) -- (LH5);
  \draw[introDispatch,introPurple] (truckPurpleFlex) -- (LH11);
  \draw[introDelivery,introBlue] (LH12) -- (LH7);
  \draw[introDelivery,introBlue] (LH7) -- (LH1);
  \draw[introDelivery,introBlue] (LH1) -- (LH4);
  \draw[introDelivery,introOrange] (LH5) -- (LH8);
  \draw[introDelivery,introOrange] (LH8) -- (LH2);
  \draw[introDelivery,introPurple] (LH11) -- (LH3);
  \path (truckBlueFlex) -- (LH12) pic[pos=0,sloped,yshift=4.5pt,scale=0.27,draw=introBlue,fill=introBlue!15] {introTruck};
  \path (truckOrangeFlex) -- (LH5) pic[pos=0,sloped,yshift=4.5pt,scale=0.27,xscale=-1,draw=introOrange,fill=introOrange!15] {introTruck};
  \path (truckPurpleFlex) -- (LH11) pic[pos=0,sloped,yshift=4.5pt,scale=0.31,xscale=-1,draw=introPurple,fill=introPurple!15] {introTruckPurpleCargo};
  \IntroLocalHubs
  \IntroParcelBoxShift{LH12}{0.28}{0.20}{introGHFill}
  \IntroParcelBoxShift{LH12}{0.58}{0.20}{introGHFill}
  \IntroParcelBoxShift{LH12}{0.43}{0.50}{introBlue}
  \IntroEmptyParcelBoxShift{LH7}{0.26}{0.22}{introGHFill}
  \IntroEmptyParcelBoxShift{LH1}{0.25}{0.20}{introBlue}
  \IntroEmptyParcelBoxShift{LH4}{0.24}{0.18}{introGHFill}
  \IntroParcelBoxShift{LH5}{-0.88}{0.38}{introPurple}
  \IntroParcelBoxShift{LH5}{-0.58}{0.38}{introOrange}
  \IntroEmptyParcelBoxShift{LH8}{0.23}{0.22}{introPurple}
  \IntroEmptyParcelBoxShift{LH2}{0.18}{0.18}{introOrange}
  \IntroParcelBoxShift{LH11}{-0.74}{0.26}{introPurple}
  \IntroEmptyParcelBoxShift{LH11}{-1.08}{0.26}{introPurple}
  \IntroEmptyParcelBoxShift{LH3}{-0.74}{0.24}{introPurple}
  \IntroRequestMark{LH12}
  \IntroRequestMark{LH5}
  \IntroRequestMark{LH11}
  \path[use as bounding box] (0.0,0.1) rectangle (10.0,7.7);
\end{tikzpicture}
\caption{Flexible PDP}
\label{fig:intro_flexible}
\end{subfigure}
\hfill
\begin{subfigure}[t]{0.315\linewidth}
\centering
\begin{tikzpicture}[x=0.42cm,y=0.42cm]
  \coordinate (A) at (0.9,5.1);
  \coordinate (B) at (1.2,3.4);
  \coordinate (C) at (2.4,1.7);
  \coordinate (D) at (3.9,2.2);
  \coordinate (E) at (4.8,3.4);
  \coordinate (F) at (5.8,4.5);
  \coordinate (G) at (7.1,4.4);
  \coordinate (H) at (8.0,3.1);
  \coordinate (I) at (8.8,5.0);
  \coordinate (J) at (9.8,3.8);
  \coordinate (K) at (9.4,1.8);
  \coordinate (L) at (6.6,1.7);

  \IntroLineSegment{introBlue}{A}{B}{0pt}
  \IntroLineSegment{introBlue}{B}{C}{0pt}
  \IntroLineSegment{introBlue}{C}{D}{-0.9pt}
  \IntroLineSegment{introBlue}{D}{E}{-1.7pt}
  \IntroLineSegment{introBlue}{E}{F}{-1.7pt}
  \IntroLineSegment{introBlue}{F}{A}{0pt}

  \IntroLineSegment{introPurple}{D}{E}{0pt}
  \IntroLineSegment{introPurple}{E}{F}{0pt}
  \IntroLineSegment{introPurple}{F}{G}{-0.9pt}
  \IntroLineSegment{introPurple}{G}{H}{-1.7pt}
  \IntroLineSegment{introPurple}{H}{L}{-0.9pt}
  \IntroLineSegment{introPurple}{L}{D}{-0.9pt}

  \IntroLineSegment{introTeal}{C}{D}{0.9pt}
  \IntroLineSegment{introTeal}{D}{E}{1.7pt}
  \IntroLineSegment{introTeal}{E}{F}{1.7pt}
  \IntroLineSegment{introTeal}{F}{G}{0.9pt}
  \IntroLineSegment{introTeal}{G}{H}{0pt}
  \IntroLineSegment{introTeal}{H}{L}{0.9pt}
  \IntroLineSegment{introTeal}{L}{C}{0pt}

  \IntroLineSegment{introOrange}{G}{I}{0pt}
  \IntroLineSegment{introOrange}{I}{J}{0pt}
  \IntroLineSegment{introOrange}{J}{K}{0pt}
  \IntroLineSegment{introOrange}{K}{H}{0pt}
  \IntroLineSegment{introOrange}{G}{H}{1.7pt}

  \IntroTruckOnSegment{introBlue}{A}{F}{0pt}{0.42}{4.0pt}
  \IntroTruckOnSegment{introPurple}{H}{L}{-0.9pt}{0.54}{4.0pt}
  \IntroTruckOnSegment{introTeal}{C}{L}{0pt}{0.60}{-4.0pt}
  \IntroTruckOnSegment{introOrange}{I}{J}{0pt}{0.48}{4.0pt}

  \IntroLineStations
  \path[use as bounding box] (0.1,0.7) rectangle (10.4,5.7);
\end{tikzpicture}
\caption{Repeatable lines}
\label{fig:intro_lines}
\end{subfigure}
\caption{Three middle-echelon operating structures for urban parcel networks.}
\endgroup
\end{figure}

%% file: sections/02-literature-review.tex
\section{Literature Review}
\label{sec:lit_review}
While our motivation lies in parcel delivery, the core problem is routing some underlying \emph{demand} through a fixed line network, a setting studied primarily in public transit and liner shipping. These literatures provide the closest modeling foundation, but their assumptions differ from our setting. We therefore review line-network routing and assignment models, focusing on where their treatment of demand routing, service frequency, capacity, and dwell or handling times differs from ours.

In a line network, vehicles repeatedly traverse predefined sequences of hubs, called lines. The line-planning problem decides which lines to operate and at what frequencies as to minimize travel times. While possible to jointly optimize lines and the underlying routed demand, the computational difficulty led to compromises on how the demand routing was modelled. For instance, \citep{LP_MT_Schobel} were the first to introduce an IP formulation for self-optimizing passengers, although without transfer times, and the resulting problem was NP-Hard. Even under centralized planning, scalable column-generation approaches simplify routing by limiting transfers \citep{borndorfer2012direct} or allowing at most one transfer \citep{bertsimas2021data}.

As a result, much of the literature repeatedly evaluates candidate line networks through a passenger-routing subproblem, often viewed as a simplified passenger assignment problem (PAP) within the broader transit assignment problem (TAP) literature. These include GRASP and local-search frameworks \citep{mauttone2009multi}, exact branch-and-bound or column-generation approaches \citep{vermeir2021exact}, and evolutionary, genetic, or tabu-search heuristics \citep{fan2010metaheuristic,nayeem2014transit,yao2014transit}. We now turn to how these fixed-line TAPs are modeled, with particular attention to endogenous handling-induced dwell times.

\paragraph{Travel-time components}
PAPs and TAPs differ in how they represent travel time. The simplest formulations consider only in-vehicle travel times and omit transfer costs \citep{borndorfer2007column}. When transfers are included, and no timetable is available at the line-planning stage, they are often represented by fixed penalties per transfer \citep{mumford2013new}. Frequency-based models instead approximate the expected waiting time for line $l$ over a planning period of duration $T$ by $\frac{T}{2f_l}$, where $f_l$ is the service frequency (see, e.g., \citep{bull2015optimization,sadrani2022optimization}). For fixed frequencies, these waiting and transfer costs are exogenous.

\paragraph{Routing behavior and control}
The routing model also depends on who controls the demand. In public transit, passengers are often assumed to choose individually optimal paths. With fixed travel times and no capacity effects, this reduces to shortest-path problems on the induced line network (see, e.g., \cite{mumford2013new, vermeir2021exact}). When costs depend on aggregate flows, the assignment is typically modeled as a user-equilibrium or fixed-point problem (see, e.g., \cite{Shmidt_Book_Integrated_Routing, goerigk2017line}). By contrast, under centralized control, demand routing is system optimal and is commonly modeled using multi-commodity flow formulations (see, e.g., \cite{laporte2007integrated, nachtigall2008simultaneous, friedrich2017integrating}).

\paragraph{Dwell Times}
In public transit and liner shipping, standard assignment models often have reasonable grounds for treating dwell or handling times as fixed or exogenous. In bus and rail networks, passenger boarding and alighting are relatively short, partially parallelized through multiple doors or cars, and small compared with line travel times, which makes it natural to omit dwell times or absorb them into fixed schedule buffers. In liner shipping, cargo handling can be sequential and time-consuming, but it is often not the binding determinant of port-call duration. Berth times also reflect terminal and berth scheduling, equipment availability, crew changes, refueling, schedule buffers, and synchronization with cargo arrivals, while voyages last days or weeks. As a result, port times are often treated outside the cargo-assignment model rather than as a direct consequence of routed cargo flows. Nonetheless, both literatures contain work on flow-induced dwell or handling times in line networks. We group this work into three streams below.

\subparagraph{Transit assignment with flow-dependent dwell times.}
One stream consists of transit assignment models with loading- and unloading-dependent dwell times \citep{babazadeh2005algorithm,larrain2008public,sun2013transit}. These papers study fixed transit line networks or corridors and determine equilibrium passenger assignments by extending strategy- or hyperpath-based assignment models with flow-dependent in-vehicle travel times. However, these models differ from our setting in several important ways. They seek user equilibria rather than centralized routing solutions; vehicle capacity constraints are generally omitted or treated indirectly; and, most importantly, service frequencies are treated as fixed even when dwell times depend on passenger flows. Moreover, once dwell times depend on boarding and alighting flows, the assignment cost mapping is generally nonseparable and asymmetric. Passengers boarding at a stop delay passengers already on board, while passengers already on board do not create the same dwell-time externality for those boarding. Hence, the mapping is not generally integrable, and the problem does not admit the standard Beckmann potential used in separable traffic assignment. The natural formulation is instead a variational inequality problem with an asymmetric cost mapping. As a result, method-of-successive-averages procedures are fixed-point heuristics as noted in \citet{zhang2017algorithm}.

A closer stream relaxes the fixed-frequency assumption by linking dwell times to the \emph{effective frequency} provided by a fixed fleet. Building on the concept of effective frequency in \citet{decea1993transit}, \citet{bellei2000transit} study transit assignment with variable frequencies and congestion effects; \citet{lam2002capacity} propose a capacity-restraint assignment model with elastic line frequency; and \citet{leurent2014traffic} provide a macroscopic simulation framework that captures interactions between flow-induced dwell times and line frequencies. This stream addresses an important limitation of the fixed-frequency models, but otherwise shares many of the same limitations: the models remain user-equilibrium or simulation-based, demand is represented through strategies, hyperpaths, or aggregate loading rules rather than freely routed multi-commodity flows, and vehicle capacity is often omitted or represented indirectly. Their solution methods also rely on MSA or related fixed-point iterations. A further limitation is computational: the numerical studies are typically restricted to small corridors or stylized networks, making it difficult to assess whether the proposed procedures scale to routing subproblems of the size needed inside line-planning algorithms.

\subparagraph{Rail scheduling, control, and frequency-setting models with dwell times.}
A second stream endogenizes dwell times in rail scheduling, passenger-flow control, and frequency-setting models without solving a routing problem over a line network. For example, \citet{szeto2014transit} consider dwell times that depend on onboard passenger loads. \citet{canca2018railway} consider load/unload dwell times for a railway rapid transit frequency problem. \citet{yuan2022joint} jointly optimize train timetables and passenger-flow control on a single metro direction, where dwell time is a linear function of boarding and alighting flows. However, several differences limit the applicability of these models to our setting. The resulting formulations often contain many classes of timetable, control, assignment, and passenger-flow variables resulting in large-scale MILP/MINLP formulations, and the numerical results are typically limited to small network case studies of a handful of hubs, even when heuristic methods are used. More importantly, demand is often not routed freely across a line network, and the models do not close the loop between flow-dependent dwell times to frequencies. As a result, transfer times and in-vehicle dwell costs are either simplified or omitted from the routing objective.

\subparagraph{Freight liner-shipping models.}
Centralized freight applications are closer to our setting because demand is routed by an operator rather than by self-optimizing users. In liner shipping, \citet{wang2014liner} model port time as a function of the number of containers handled and study network design with delivery deadlines. Instead of elastic frequency, their model fixes a weekly service frequency, and absorbs additional port time through buffer time or additional ships. However they do not allow cargo transshipment because connection times at transshipment ports would substantially complicate the model which is already a large MINLP that is solved by a column-generation heuristic for a 12-port Asia--Europe network. \citet{ozcan2020cargo} study cargo allocation and vessel scheduling on a fixed liner-shipping network with flow-dependent port-stay lengths and transshipment synchronization. However, services and desired frequencies are fixed, so flow-dependent port stays are absorbed through vessel schedules and shipment tardiness rather than feeding back into effective service frequency, transfer waiting times, or transport capacity.
\subparagraph{Parcel delivery models.}
In urban parcel delivery, \cite{ido_tal} model dynamic parcel routing over a line network by assigning each arriving parcel to a shortest path in a time-expanded network and updating edge capacities for future arrivals. Their dwell times are fixed and exogenous. \citet{eizen4903075dynamic} study dynamic parcel routing in a hyper-connected network with endogenous dwell times that depend on loaded and unloaded parcels, but use heuristic real-time routing and simulation over fixed vehicle routes, which take a significant amount of time to simulate.

In summary, handling-induced dwell times have been studied in both user-equilibrium passenger assignment models and centralized freight or parcel-routing settings. Some papers capture parts of the feedback between dwell times, cycle times, and effective frequency, but none provides the routing oracle needed here. Existing approaches typically rely on equilibrium, simulation, MSA, or heuristic methods; impose restrictive assumptions on routing behavior; omit or indirectly represent capacity; simplify transfer and in-vehicle dwell costs; or test only small stylized networks. In contrast, our focus is a centralized routing subproblem for a fixed line plan that captures the interaction between handling times, travel times, effective frequencies, transfer times, and capacities, while producing high-quality primal solutions and strong optimality certificates on larger instances.

%% file: sections/03-problem-description.tex
\section{Problem Description and Modelling}
\label{sec:methods}
We now describe the RLN-HT problem, with model parameters summarized in Table~\ref{table:problem_parameters}. We use boldface letters for vectors, generalized inequalities are interpreted element-wise, and $\mathbf{1}$ denotes the all-ones vector.

\begin{table}[htbp]
\centering
\renewcommand{\arraystretch}{0.7} %
\caption{Summary of model parameters.\label{table:problem_parameters}}
\begin{tabular}{ll}
\toprule
\textbf{Symbol} & \textbf{Description} \\
\midrule
$\mathcal{N}$ & set of hubs in an underlying PTN \\
$\mathcal{A}$ & set of arcs in an underlying PTN \\
$c_{ij}$ & travel time on arc $(i,j) \in \mathcal{A}$ \\
$\mathcal{L}$ & set of lines, each as a simply cycle of hubs in the PTN \\
$\mathcal{N}(l)$ & set of hubs visited by line $l$ \\
$\mathcal{A}(l)$ & set of consecutive hub pairs along line $l$ \\
$\mathcal{L}(i)$ & set of lines visiting hub $i$ \\
$a_{l}$ & number of vehicles assigned to line $l$ \\
$C_{l}$ & total driving time of line $l$, $\sum_{ij \in \mathcal{A}(l)} c_ij$ \\
$\mathcal{D}$ & set of origin–destination pairs \\
$w_{k}$ & demand per period for OD pair $k$ \\
$\text{orig}(k)$ & origin of OD-pair $k$ \\
$\text{dest}(k)$ & destination of OD-pair $k$ \\
$T$ & length of a period (time units) \\
$\alpha$ & time to load one unit of demand at a hub \\
$\beta$ & time to unload one unit of demand at a hub \\
$Q$ & vehicle capacity (units of demand) \\
$f_{l}$ & frequency of line $l$ \\
\bottomrule
\end{tabular}
\end{table}

The line network is defined over a physical transport network (PTN) represented by a directed graph \(G = (\mathcal{N}, \mathcal{A})\), where each arc \((i,j) \in \mathcal{A}\) represents a direct connection between two hubs with travel time \(c_{ij}\). The set of lines is denoted by \(\mathcal{L}\); each line \(l \in \mathcal{L}\) has \(a_l\) vehicles equally spaced along a fixed simple cycle of hubs, and services are repeated indefinitely. For notational convenience, let \(\mathcal{N}(l)\) denote the set of hubs visited by line \(l\), \(\mathcal{A}(l)\) the set of pairs of consecutive hubs along line \(l\), \(\mathcal{L}(i)\) the set of lines that visit hubs \(i\) and \(C_{l}\) the summation of the travel time across a line.

The line network routes parcel demand between origin-destination (OD) hub pairs. Let $\mathcal{D}$ denote the set of OD pairs, and let $w_k$ be the demand for pair $k$ per period of length $T$. For each $k \in \mathcal{D}$, we write $\mathrm{orig}(k)$ and $\mathrm{dest}(k)$ for its origin and destination, respectively. Demand is routed to its destination along lines, possibly via multi-leg journeys with transfers at intermediate hubs. Loading and unloading one parcel at a hub take $\alpha$ and $\beta$ time units, respectively, and each vehicle has a capacity of $Q$.

The objective is to minimize the average travel time (equivalently, total travel time) of the parcels, which comprises the time spent waiting at a hub to board a vehicle, the in-vehicle time while the vehicle is moving, and the in-vehicle time while the vehicle dwells to load and unload parcels. We devise a planning-level oracle for evaluating a given fixed line network. Note that this is not a real-time dispatching policy. A fully operational model would require a time-expanded, online, integer routing-and-timetabling formulation, which is already intractable for modest instances and ill-suited for embedding within line-planning frameworks. We therefore use a fluid approximation that captures the feedback loop between parcel flows, handling-induced dwell times, effective frequencies, and capacity.  As a result, our model makes the following assumptions: (1) the demand rate for each origin-destination pair is constant; (2) parcels are infinitely divisible (modeled as continuous flow); (3) the expected waiting time of parcels to be boarded is half the cycle time of the line divided by the number of vehicles assigned to it; (4) the system operates indefinitely under the same conditions and hence we optimize its steady-state performance.

\subsection{Change \& Go Network}
\label{ssec:CGN}
To model the line-specific travel, boarding, and unloading decisions, we use a modified version of a \emph{change-and-go network} (CGN) from the line-planning literature, denoted $G_{\text{CGN}} = (\mathcal{N}_{\text{CGN}}, \mathcal{A}_{\text{CGN}})$. Its node set $\mathcal{N}_{\text{CGN}}$ consists of an input/output node $IO_i$ for each physical hub $i \in \mathcal{N}$, representing the point where demand enters, leaves, or transfers at hub $i$, and a line node $V_i^l$ for each line $l \in \mathcal{L}$ and hub $i \in \mathcal{N}(l)$, representing being on board a vehicle of line $l$ at hub $i$. The arc set $\mathcal{A}_{\text{CGN}}$ contains three types of arcs:
\begin{enumerate}
    \item \emph{Travel arcs} $V_i^l \rightarrow V_j^l$ for $(i,j) \in \mathcal{A}(l)$,
    representing in-vehicle travel along arc $(i,j)$ on line $l$.
    \item \emph{Boarding arcs} $IO_i \rightarrow V_i^l$ for each $i \in \mathcal{N}(l)$ and $l$, representing boarding line $l$ at hub $i$.
    \item \emph{Unloading arcs} $V_i^l \rightarrow IO_i$ for each $i \in \mathcal{N}(l)$ and $l$, representing getting off line $l$ at hub $i$.
\end{enumerate}
In the CGN, boarding arcs represent the time a parcel waits before getting on a line at its origin, or at an intermediate hub while awaiting a connecting line. The cost of a boarding arc comprises the expected time until a vehicle of that line arrives and the dwell time until it departs. Loading occurs during this dwell period, so a separate loading arc would be redundant, as any parcel that uses a boarding arc necessarily boards the corresponding line before departure. We therefore capture boarding and loading through boarding arcs, while unloading is represented separately when the parcel leaves a vehicle. Note that because each line's cycle time is determined endogenously, the cost of a boarding arc is itself a function of the routing decisions rather than a fixed parameter. This is an inherent source of nonlinearity in our model.

An illustrative example is shown in Figure~\ref{fig:cgn-illustration}. On the left, a two-line network is depicted, consisting of the red thin line $3 \rightarrow 4 \rightarrow 2 \rightarrow 3$ and the green thicker line $1 \rightarrow 4 \rightarrow 1$. The corresponding CGN is shown in the middle, where each line’s copy is enclosed in a square and connected to the IO nodes via boarding and unloading arcs. On the right, a path for a parcel with origin $1$ to destination $2$ is highlighted, corresponding to boarding the green line at node $1$, traveling from $1$ to $4$, transferring at node $4$ to the red line, traveling from $4$ to $2$, and lastly alighting at node $2$.

\begin{figure}[htbp]
\centering
\begin{minipage}[c]{0.32\textwidth}
\centering
\begin{tikzpicture}[>=Latex, node distance=2cm, every node/.style={circle, draw, minimum size=0.5cm}, scale=1]
\node[fill=white] (io1) at (-1, -1) {$1$};
\node[fill=white] (io2) at (1, -1) {$2$};
\node[fill=white] (io3) at (-1, 1) {$3$};
\node[fill=white] (io4) at (1, 1) {$4$};
\tikzset{
  greenEdge/.style={->, draw=green, line width=1.6pt},
  redEdge/.style={->, draw=red,   line width=0.5pt}
}

\draw[greenEdge] (io1) to[bend left=20] (io4);
\draw[greenEdge] (io4) to[bend left=20] (io1);
\draw[redEdge] (io3) -- (io4);
\draw[redEdge] (io4) -- (io2);
\draw[redEdge] (io2) -- (io3);

\end{tikzpicture}
\end{minipage}
\label{fig:cgn_ptn}
\hfill
\begin{minipage}[c]{0.32\textwidth}
\centering
\begin{tikzpicture}[scale=0.7,>=Latex, node distance=2cm,
  arcred/.style={->, line width=0.5pt, red},
  arcgreen/.style={->, line width=1.6pt, green},
  arcdashed/.style={->, thick, dashed},
  arcdotted/.style={->, thick, dotted}
]

\node[fill=red, draw=black, shape= circle,inner sep=1pt, outer sep=0pt] (v31) at (-2,6.5) {$V_{3}^{1}$};
\node[fill=red, draw=black, shape= circle,inner sep=1pt, outer sep=0pt] (v41) at (0,6.5) {$V_{4}^{1}$};
\node[fill=red, draw=black, shape= circle,inner sep=1pt, outer sep=0pt] (v21) at (0,4.5) {$V_{2}^{1}$};
\node[fill=green, draw=black, shape= circle,inner sep=1pt, outer sep=0pt] (v12) at (-2,1) {$V_{1}^{2}$};
\node[fill=green, draw=black, shape= circle,inner sep=1pt, outer sep=0pt] (v42) at (0,3) {$V_{4}^{2}$};

\node[fill=white, draw=black, shape= circle,inner sep=1pt, outer sep=0pt] (io3) at (-4,5.25) {$IO_{3}$};
\node[fill=white, draw=black, shape= circle,inner sep=1pt, outer sep=0pt] (io4) at (2,5.25) {$IO_{4}$};
\node[fill=white, draw=black, shape= circle,inner sep=1pt, outer sep=0pt] (io1) at (-4,2) {$IO_{1}$};
\node[fill=white, draw=black, shape= circle,inner sep=1pt, outer sep=0pt] (io2) at (2,2) {$IO_{2}$};

\draw[arcgreen] (v12) to[bend left=20] (v42);
\draw[arcgreen] (v42) to[bend left=20] (v12);
\draw[arcred] (v31) -- (v41);
\draw[arcred] (v41) -- (v21);
\draw[arcred] (v21) -- (v31);

\draw[arcdashed] (io1) to[bend left=10] (v12);
\draw[arcdashed] (io4) to[bend left=10] (v42);
\draw[arcdashed] (io3) to[bend left=10] (v31);
\draw[arcdashed] (io4) to[bend left=10] (v41);
\draw[arcdashed] (io2) to[bend left=10] (v21);

\draw[arcdotted] (v12) to[bend left=10] (io1);
\draw[arcdotted] (v42) to[bend left=10] (io4);
\draw[arcdotted] (v31) to[bend left=10] (io3);
\draw[arcdotted] (v41) to[bend left=10] (io4);
\draw[arcdotted] (v21) to[bend left=10] (io2);

\draw[blue, thick, rounded corners] ($(v31) + (-0.6, 0.6)$) rectangle ($(v21) + (0.6, -0.6)$);
\draw[blue, thick, rounded corners] ($(v12) + (-0.6, -0.6)$) rectangle ($(v42) + (0.6, 0.6)$);

\end{tikzpicture}
\end{minipage}
\hfill
\begin{minipage}[c]{0.32\textwidth}
\centering
\begin{tikzpicture}[scale=0.7,>=Latex, node distance=2cm,
  arcred/.style={->, line width=0.5pt, red},
  arcgreen/.style={->, line width=1.6pt, green},
  arcdashed/.style={->, thick, dashed},
  arcdotted/.style={->, thick, dotted}
]

  \node[fill=red, draw=black, shape= circle,opacity=0.2,inner sep=1pt, outer sep=0pt] (v31) at (-2,6.5) {$V_{3}^{1}$};
  \node[fill=red, draw=black, shape= circle,inner sep=1pt, outer sep=0pt] (v41) at (0,6.5) {$V_{4}^{1}$};
  \node[fill=red, draw=black, shape= circle,inner sep=1pt, outer sep=0pt] (v21) at (0,4.5) {$V_{2}^{1}$};
  \node[fill=green, draw=black, shape= circle,inner sep=1pt, outer sep=0pt] (v12) at (-2,1) {$V_{1}^{2}$};
  \node[fill=green, draw=black, shape= circle,inner sep=1pt, outer sep=0pt] (v42) at (0,3) {$V_{4}^{2}$};

  \node[fill=white, draw=black, shape= circle,opacity=0.2,inner sep=1pt, outer sep=0pt] (io3) at (-4,5.25) {$IO_{3}$};
  \node[fill=white, draw=black, shape= circle,inner sep=1pt, outer sep=0pt] (io4) at (2,5.25) {$IO_{4}$};
  \node[fill=white, draw=black, shape= circle,inner sep=1pt, outer sep=0pt] (io1) at (-4,2) {$IO_{1}$};
  \node[fill=white, draw=black, shape= circle,inner sep=1pt, outer sep=0pt] (io2) at (2,2) {$IO_{2}$};

  \draw[arcgreen] (v12) to[bend left=20] (v42);
  \draw[arcgreen,opacity=0.2] (v42) to[bend left=20] (v12);
  \draw[arcred,opacity=0.2] (v31) -- (v41);
  \draw[arcred] (v41) -- (v21);
  \draw[arcred,opacity=0.2] (v21) -- (v31);

  \draw[arcdashed] (io1) to[bend left=10] (v12);
  \draw[arcdashed,opacity=0.2] (io4) to[bend left=10] (v42);
  \draw[arcdashed,opacity=0.2] (io3) to[bend left=10] (v31);
  \draw[arcdashed] (io4) to[bend left=10] (v41);
  \draw[arcdashed,opacity=0.2] (io2) to[bend left=10] (v21);

  \draw[arcdotted,opacity=0.2] (v12) to[bend left=10] (io1);
  \draw[arcdotted] (v42) to[bend left=10] (io4);
  \draw[arcdotted,opacity=0.2] (v31) to[bend left=10] (io3);
  \draw[arcdotted,opacity=0.2] (v41) to[bend left=10] (io4);
  \draw[arcdotted,opacity=0.2] (v21) to[bend left=10] (io2);

  \draw[blue, thick, rounded corners] ($(v31) + (-0.6, 0.6)$) rectangle ($(v21) + (0.6, -0.6)$);
  \draw[blue, thick, rounded corners] ($(v12) + (-0.6, -0.6)$) rectangle ($(v42) + (0.6, 0.6)$);

\end{tikzpicture}
\end{minipage}
\par\vspace{1.2ex}
\hspace*{0.34\textwidth}%
\begin{minipage}{0.64\textwidth}
\centering
\begin{tikzpicture}
\node[draw, rounded corners, fill=white, font=\tiny, inner sep=3pt] {
  \begin{tabular}{@{}c@{\hspace{2pt}}l@{\hspace{7pt}}c@{\hspace{2pt}}l@{\hspace{7pt}}c@{\hspace{2pt}}l@{\hspace{7pt}}c@{\hspace{2pt}}l@{}}
    \tikz[baseline=-0.5ex]{\draw[-{Latex[length=2.0mm,width=1.5mm]}, line width=0.45pt, red] (0,0) -- (0.42,0);} & Travel on line 1 &
    \tikz[baseline=-0.5ex]{\draw[-{Latex[length=2.0mm,width=1.5mm]}, line width=1.2pt, green] (0,0) -- (0.42,0);} & Travel on line 2 &
    \tikz[baseline=-0.5ex]{\draw[-{Latex[length=2.0mm,width=1.5mm]}, line width=0.6pt, dashed] (0,0) -- (0.42,0);} & Boarding &
    \tikz[baseline=-0.5ex]{\draw[-{Latex[length=2.0mm,width=1.5mm]}, line width=0.6pt, dotted] (0,0) -- (0.42,0);} & Unload
  \end{tabular}
};
\end{tikzpicture}
\end{minipage}
\caption{Two-line network (left), CGN (middle), and highlighted path from $IO_{1}$ to $IO_{2}$ (right).}
\label{fig:cgn-illustration}
\end{figure}

\subsection{Generalized Capacity-and-Flow Assignment Formulation}
\label{ssec:gen_CFA}
We formulate RLN-HT as a generalized capacity-and-flow assignment (CFA) model. Define the following flow variables:
\begin{description}
    \item[$d_{ij}^{kl}$] the quantity of demand on travel arc $V_{i}^{l} \rightarrow V_{j}^{l}$ for commodity $k \in \mathcal{D}$ during the period.
    \item [$t_{i}^{kl}$] the quantity of demand on boarding arc $IO_{i} \rightarrow V_{i}^{l}$ for commodity $k \in \mathcal{D}$ during the period.
    \item [$u_{i}^{kl}$] the quantity of demand on unloading arc $V_{i}^{l} \rightarrow IO_{i}$ for commodity $k \in \mathcal{D}$ during the period.
\end{description}
We refer to the entirety of the flow variables by $\boldsymbol{x} \coloneq (\boldsymbol{d},\boldsymbol{t},\boldsymbol{u})$. In addition, let $\Delta_i^l$ denote the dwell time of a vehicle of line $l$ at hub $i$, that is, the time each vehicle of line $l$ spends at hub $i$ per visit to load and unload parcels. Our goal is to determine the routes for all commodities that minimize total travel time. We define travel time on a travel arc $(V_i^l, V_j^l)$ as $c_{ij} + \Delta_j^l$, representing the time from leaving hub $i$ until leaving hub $j$. We assign unloading arc $(V_i^l,IO_i)$ a time cost of $\Delta_i^l$, where $\Delta_i^l$ is the dwell time of a vehicle of line $l$ at hub $i$. Since the model does not track the within-dwell loading and unloading order, this cost corresponds to the conservative case in which the parcel is unloaded last. Lastly, the $a_l$ vehicles assigned to line $l$ collectively complete
\[
f_l \coloneqq \frac{T}{C_l + \sum_{i \in \mathcal{N}(l)} \Delta_i^l}\, a_l
\]
cycles during a period $T$, where the denominator is the cycle time of line $l$, including all dwell times. The travel time of boarding arcs $(IO_{i}, V_{i}^{l})$ is set to the average expected waiting time $\frac{T}{2 f_l}$, which simplifies to $\frac{C_l + \sum_{i \in \mathcal{N}(l)} \Delta_i^l}{2 a_l}$.  For a hub $i$ on line $l$, let $i_{-}$ and $i_{+}$ denote the preceding and subsequent hubs to $i$ along line $l$, respectively. To minimize the total travel time of demand, we solve the program $(\mathbf{RD}_{x,\Delta})$, where the subscripts $x$ and $\Delta$ represent the optimization over flow and dwell time variables, respectively.

\begin{subequations}\label{model:RDa_delta}
\begin{align}
(\mathbf{RD}_{x,\Delta})
\, \,
\min_{d,t,u,\Delta}
& \sum_{l \in \mathcal{L}} \Bigg[
     \sum_{(i,j) \in \mathcal{A}(l)}
     \underbrace{\bigl(c_{ij} + \Delta_{j}^{l}\bigr)}_{\text{in-vehicle times}}
     \sum_{k \in \mathcal{D}} d_{ij}^{kl}
     +
     \sum_{i \in \mathcal{N}(l)} &
     \underbrace{\frac{C_{l} + \sum_{j \in \mathcal{N}(l)} \Delta_{j}^{l}}{2a_l}}_{\text{off-vehicle times}}
     \sum_{k \in \mathcal{D}} t_{i}^{kl}
  \Bigg]
\label{eq:RD_form_1_obj}
\\[0.8ex]
\text{s.t.}\quad
& \sum_{l \in \mathcal{L}(i)} t_{i}^{kl} - u_{i}^{kl} =
\begin{cases}
w_{k},   & i = \mathrm{orig}(k),\\
-w_{k},  & i = \mathrm{dest}(k),\\
0,       & \text{otherwise},
\end{cases}
& \forall i \in \mathcal{N},\, k \in \mathcal{D} \label{eq:RD_form_1_const_1}
\\[0.6ex]
& d_{i\,i_{+}}^{kl} + u_i^{kl} - d_{i_{-}\,i}^{kl} - t_i^{kl} = 0,
& \forall l \in \mathcal{L},\, i \in \mathcal{N}(l),\, k \in \mathcal{D} \label{eq:RD_form_1_const_2}
\\[0.6ex]
& \sum_{k \in \mathcal{D}} d_{ij}^{kl}
  \le Q a_l
  \tfrac{T}{C_{l} + \sum_{s \in \mathcal{N}(l)} \Delta_{s}^{l}},
& \forall l \in \mathcal{L},\, (i,j) \in \mathcal{A}(l), \label{eq:RD_form_1_const_3}
\\[0.6ex]
& \sum_{k \in \mathcal{D}} \bigl(\alpha t_{i}^{kl} + \beta u_{i}^{kl}\bigr)
  \le \Delta_{i}^{l} a_l
  \tfrac{T}{C_{l} + \sum_{j \in \mathcal{N}(l)} \Delta_{j}^{l}}
& \forall l \in \mathcal{L},\, i \in \mathcal{N}(l), \label{eq:RD_form_1_const_4}
\\[0.6ex]
& \boldsymbol{d},\, \boldsymbol{t},\, \boldsymbol{u},\, \boldsymbol{\Delta} \ge 0. \label{eq:RD_form_1_const_5}
\end{align}
\end{subequations}

The objective \eqref{eq:RD_form_1_obj} minimizes total travel time across the three arc types. The first term captures in-vehicle travel from \(i\) to \(j\) on line \(l\), with driving time \(c_{ij}\). Demand remaining on board at \(j\) and demand transferring at \(j\) both incur dwell time \(\Delta_j^l\), so these terms are aggregated. The second term captures expected waiting time for a vehicle, including any dwell time after boarding before departure. Constraints \eqref{eq:RD_form_1_const_1} and \eqref{eq:RD_form_1_const_2} enforce flow conservation at IO nodes and line nodes in $G_{\text{CGN}}$, respectively. Constraint \eqref{eq:RD_form_1_const_3} imposes vehicle capacity: the left-hand side is the total number of parcels that travel from $i$ to $j$ along line $l$ during a period, and the right-hand side is the effective capacity provided by line $l$, given by the vehicle capacity $Q$ times the line frequency $f_l$. Similarly, constraint \eqref{eq:RD_form_1_const_4} enforces a temporal capacity relation at each hub $i \in \mathcal{N}(l)$ for line $l$. The left-hand side is the total loading and unloading time required in the period at hub $i$, as implied by the flow variables, while the right-hand side is the total dwell time available there, given by the dwell time per visit $\Delta_i^l$ multiplied by the frequency, $f_l$. Lastly, constraints \eqref{eq:RD_form_1_const_5} enforce non-negativity on flow and dwell time variables.

Formulation $(\mathbf{RD}_{x,\Delta})$ is equivalent to a bilinear program and thus a special case of a nonconvex quadratically constrained quadratic program. The nonconvexity arises from products of flow and dwell-time variables in the objective and in the capacity-type constraints \eqref{eq:RD_form_1_const_3}--\eqref{eq:RD_form_1_const_4}. This formulation is closely related to classical \emph{capacity and flow assignment} (CFA) models and, more broadly, to multicommodity capacitated network design problems. Such formulations jointly optimize routing and capacity decisions by allowing additional flow on network arcs at the expense of capacity installation or activation costs that appear in the objective. See~\cite{gendron1999multicommodity} for an overview of problems and formulations. Many exact approaches exploit a master--subproblem structure by decomposing the problem into a master problem over capacity or design variables and a flow subproblem that can be solved efficiently as a multicommodity flow problem. See~\cite{costa2005survey} for a survey of such decomposition methods.

In the RLN-HT problem, dwell times play an analogous role to capacity in CFA because they determine the service frequency of a line $f_l$ through its cycle length. As the number of visits to each hub per period, $f_{l}$, increases, the effective capacity of a line increases and the expected transfer times shorten. A similar decomposition perspective is therefore attractive, since $(\mathbf{RD}_{x,\Delta})$ reduces to a linear multicommodity flow (MCF) problem for fixed dwell times $\Delta$. However, due to the presence of nonconvex bilinear terms, this decomposition does not come with the classical global optimality guarantees enjoyed by generalized Benders approaches in regular CFA settings (e.g.,~\cite{mahey2001capacity}).

\subsection{Recovering a Pure MCF Formulation}
\label{ssec:recover_MCF}
To recover a pure multicommodity flow formulation, we first observe that there exists an optimal solution to $(\mathbf{RD}_{x,\Delta})$ where constraint~\eqref{eq:RD_form_1_const_4} must hold at equality. Intuitively, this constraint links the handling workload at hub $i$ to the dwell time available there. If this constraint were slack, dwell time could be reduced
while preserving feasibility and without increasing the objective.
In particular, hubs with no handling can be assigned zero
dwell time without worsening the objective. Proposition~\ref{prop:1} formalizes this observation. All proofs appear in Appendix~\ref{app:proofs}.

\begin{proposition}
\label{prop:1}
There exists an optimal solution to
$(\mathbf{RD}_{x,\Delta})$ in which
constraint~\eqref{eq:RD_form_1_const_4} is tight for every
line $l\in\mathcal{L}$ and hub $i\in\mathcal{N}(l)$.
\end{proposition}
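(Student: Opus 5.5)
The plan is to start from an arbitrary optimal solution $(\boldsymbol{x},\boldsymbol{\Delta})$ of $(\mathbf{RD}_{x,\Delta})$, keep the flows $\boldsymbol{x}$ fixed, and replace the dwell times line by line with a new vector $\boldsymbol{\Delta}^*$ that makes \eqref{eq:RD_form_1_const_4} tight. The steps are: show $\boldsymbol{\Delta}^*$ is componentwise no larger than $\boldsymbol{\Delta}$; deduce that feasibility is preserved and the objective does not increase; conclude that $(\boldsymbol{x},\boldsymbol{\Delta}^*)$ is optimal. The argument decouples across lines. Once $\boldsymbol{x}$ is fixed, the dwell times of line $l$ enter only the terms of line $l$ in the objective and the constraints \eqref{eq:RD_form_1_const_3}--\eqref{eq:RD_form_1_const_4} for line $l$. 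So I fix one line $l$ and write $C=C_l$, $a=a_l$, $S=\sum_{i\in\mathcal{N}(l)}\Delta_i^l$. Let the workloads be $h_i=\sum_k(\alpha t_i^{kl}+\beta u_i^{kl})\ge 0$, with total $H=\sum_i h_i$. I assume $C>0$, which is natural since lines have positive driving time.

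\textbf{Step 1 (lower bound on the total dwell).} Constraint \eqref{eq:RD_form_1_const_4} can be rewritten as
\[
h_i\,(C+S)\le \Delta_i^l\, aT \qquad \forall i\in\mathcal{N}(l).
\]
Summing over $i$ gives $H(C+S)\le S\,aT$, i.e.\ $S(aT-H)\ge HC$.
If $H=0$, I simply set $\Delta_i^{l*}=0$ for all $i$.
If $H>0$, then $HC>0$ forces $aT>H$, and hence $S\ge S^*:=HC/(aT-H)$.

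\textbf{Step 2 (construction of $\boldsymbol{\Delta}^*$).} For $H>0$, define
\[
\Delta_i^{l*}=\frac{h_i\,(C+S^*)}{aT}.
\]
Summing and using the definition of $S^*$, one checks that $\sum_i\Delta_i^{l*}=H(C+S^*)/(aT)=S^*$. Hence \eqref{eq:RD_form_1_const_4} holds with equality for every $i$, since $\Delta_i^{l*}aT/(C+S^*)=h_i$.

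\textbf{Step 3 (componentwise comparison).} From the rewritten constraint and $S\ge S^*$,
\[
\Delta_i^l\ \ge\ \frac{h_i(C+S)}{aT}\ \ge\ \frac{h_i(C+S^*)}{aT}=\Delta_i^{l*}\ \ge 0 .
\]
So $\boldsymbol{\Delta}^*\le\boldsymbol{\Delta}$ and, in particular, $S^*\le S$.

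\textbf{Step 4 (feasibility and objective).} Capacity \eqref{eq:RD_form_1_const_3} is preserved, because its right-hand side $QaT/(C+S^*)$ is at least $QaT/(C+S)$. Constraints \eqref{eq:RD_form_1_const_1}--\eqref{eq:RD_form_1_const_2} are untouched, since the flows are unchanged, and nonnegativity holds. In the objective, every coefficient, namely $c_{ij}+\Delta_j^l$ and $(C+\sum_j\Delta_j^l)/(2a)$, is nondecreasing in $\boldsymbol{\Delta}$ and multiplies nonnegative flows. Hence the objective does not increase, so $(\boldsymbol{x},\boldsymbol{\Delta}^*)$ is optimal with \eqref{eq:RD_form_1_const_4} tight on line $l$. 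Applying this to every line yields the claim.

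The step I expect to need the most care is Step 1, handling the degenerate regimes. One must justify that an optimal solution exists at all, or else phrase the result as ``from any optimal solution''. One must also rule out $H\ge aT$ when $H>0$; this follows from feasibility and $C_l>0$, and I would state $C_l>0$ explicitly as an assumption. The $H=0$ case also needs to be treated separately, where all dwell times on the line are set to zero.
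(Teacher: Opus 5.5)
Your proof is correct, but it takes a different route from the paper.

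\textbf{The paper's route.} The paper argues extremally. It fixes the optimal flow $\mathbf{x}^*$ and picks, among dwell vectors keeping $(\mathbf{x}^*,\boldsymbol{\Delta})$ feasible and optimal, one of minimum total dwell. It then shows that if \eqref{eq:RD_form_1_const_4} were slack at some $(i,l)$, lowering $\Delta_i^l$ by a small $\epsilon$ would keep every constraint satisfied without raising the objective, contradicting minimality. This works because the cycle-time denominator shrinks, and at $(i,l)$ itself the slack absorbs the change by continuity.

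\textbf{Your route.} You construct the tight vector explicitly, $\Delta_i^{l*}=h_i(C_l+S^*)/(Ta_l)$ with $S^*=C_lH/(Ta_l-H)$. You then prove the componentwise domination $\boldsymbol{\Delta}^*\le\boldsymbol{\Delta}$ for every feasible $\boldsymbol{\Delta}$. This is essentially the argument the paper defers to the proof of Proposition~\ref{prop:2}, namely the chain $\Delta_i^l\ge h_i^l/f_l\ge\Delta_i^{*l}$.

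\textbf{What each buys.} Your version gives Proposition~\ref{prop:1} together with the closed form \eqref{eq:opt_delta} and its optimality in one stroke. It also makes the needed assumption $C_l>0$ and the degenerate case $H=0$ explicit, which the paper leaves implicit. It requires nothing beyond the existence of some optimal solution, which both arguments presuppose.

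The paper's version is shorter and uses only monotonicity of the objective and constraint right-hand sides in $\boldsymbol{\Delta}$. It would therefore survive model variants where the tight system has no closed form. However, it tacitly assumes that a minimum-total-dwell optimal $\boldsymbol{\Delta}$ exists. That is true: for fixed $\mathbf{x}^*$, after multiplying through by $C_l+\sum_j\Delta_j^l>0$, the feasibility and optimality conditions are linear in $\boldsymbol{\Delta}$. Minimizing $\sum\Delta$ over this nonempty polyhedron in the nonnegative orthant therefore attains its minimum. The paper does not state this justification.
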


Given Proposition~\ref{prop:1}, constraint~\eqref{eq:RD_form_1_const_4} can be taken to hold at equality at optimality. For a fixed flow vector, define the handling workload at hub $i$ on line $l$ as $h_i^l(\mathbf{x}) := \sum_{k\in\mathcal{D}}(\alpha t_i^{kl}+\beta u_i^{kl})$. We get the following proposition:

\begin{proposition}
\label{prop:2}
Fix a nonnegative flow $\mathbf{x}=(\boldsymbol{d},\boldsymbol{t},\boldsymbol{u})$ satisfying the flow-conservation constraints \eqref{eq:RD_form_1_const_1}--\eqref{eq:RD_form_1_const_2}.  There exists a dwell-time vector $\boldsymbol{\Delta}$ such that $(\mathbf{x},\boldsymbol{\Delta})$ is feasible for $(\mathbf{RD}_{x,\Delta})$ if and only if
\begin{align}
T a_l
&>\sum_{i\in\mathcal{N}(l)} h_i^l(\mathbf{x}),
&& \forall\, l\in\mathcal{L}, \label{eq:new_const_prop}
\\
\sum_{k\in\mathcal{D}} d_{ij}^{kl}
&\le
\frac{Q}{C_l}
\Biggl(
Ta_l
-
\sum_{i\in\mathcal{N}(l)} h_i^l(\mathbf{x})
\Biggr),
&& \forall\, (i,j)\in\mathcal{A}(l),\ \forall\, l\in\mathcal{L}, \label{eq:new_cap_const_prop}
\end{align}
When these conditions hold, a dwell-time vector minimizing
total travel time for this fixed flow is given componentwise by
\begin{equation}
\Delta_i^{*l}
=
\frac{
C_l h_i^l(\mathbf{x})
}{
Ta_l
-
\sum_{j\in\mathcal{N}(l)} h_j^l(\mathbf{x})
},
\qquad \forall\, i\in\mathcal{N}(l),\ \forall\, l\in\mathcal{L}.
\label{eq:opt_delta}
\end{equation}
\end{proposition}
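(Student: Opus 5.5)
Because the lines are decoupled once the flow $\mathbf{x}$ is fixed, I will argue one line $l$ at a time. Write $h_i := h_i^l(\mathbf{x})$, $H := \sum_{i\in\mathcal{N}(l)} h_i$, $S := \sum_{i\in\mathcal{N}(l)}\Delta_i^l$, and $D_{ij} := \sum_k d_{ij}^{kl}$. Multiplying through by the positive cycle time $C_l+S$ (assuming $C_l>0$), constraints \eqref{eq:RD_form_1_const_3}--\eqref{eq:RD_form_1_const_4} become
\[
D_{ij}(C_l+S)\le QTa_l \quad\text{and}\quad h_i(C_l+S)\le Ta_l\,\Delta_i^l .
\]

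\emph{Necessity.} Suppose some $\boldsymbol{\Delta}\ge 0$ is feasible. Summing the temporal constraint over $i\in\mathcal{N}(l)$ gives $H(C_l+S)\le Ta_l S$, i.e.\ $HC_l\le (Ta_l-H)S$. If $H>0$, the left side is positive, which forces $Ta_l>H$. If $H=0$, then $Ta_l>0=H$ holds trivially since $T,a_l>0$. Either way \eqref{eq:new_const_prop} holds, and we obtain $S\ge HC_l/(Ta_l-H)$. Hence $C_l+S\ge C_l+\frac{HC_l}{Ta_l-H}=\frac{C_lTa_l}{Ta_l-H}$. Substituting into the capacity constraint gives $D_{ij}\le QTa_l/(C_l+S)\le \frac{Q}{C_l}(Ta_l-H)$, which is \eqref{eq:new_cap_const_prop}.

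\emph{Sufficiency and optimality.} Given the two conditions, define $\Delta^{*}$ by \eqref{eq:opt_delta}. Then $S^*=HC_l/(Ta_l-H)$ and $C_l+S^*=C_lTa_l/(Ta_l-H)$. With these values, $\Delta_i^{*}Ta_l/(C_l+S^*)=h_i$, so \eqref{eq:RD_form_1_const_4} holds with equality. Likewise $QTa_l/(C_l+S^*)=Q(Ta_l-H)/C_l$, so \eqref{eq:RD_form_1_const_3} is exactly \eqref{eq:new_cap_const_prop}.

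For optimality, the objective with $\mathbf{x}$ fixed is nondecreasing in each $\Delta_j^l$, since every coefficient is a nonnegative flow sum. So it suffices to show $\Delta^*\le\Delta$ componentwise for any feasible $\Delta$. From the per-hub constraint, $\Delta_i^l\ge h_i(C_l+S)/(Ta_l)$, and from the necessity part, $C_l+S\ge C_lTa_l/(Ta_l-H)$. Chaining the two gives $\Delta_i^l\ge h_iC_l/(Ta_l-H)=\Delta_i^{*l}$. Since the objective is monotone in $\Delta$, $\Delta^*$ minimizes total travel time for this flow, which is consistent with Proposition~\ref{prop:1}.

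The main obstacle is this componentwise domination. The per-hub bound on $\Delta_i^l$ depends on $S$, which is itself unknown, so the bound cannot be read off directly. The step that makes it work is first deriving the aggregate lower bound on $S$ from the summed constraint and then substituting it back into each hub's constraint. I also have to check the degenerate case $H=0$ separately, where $\Delta^*=0$.
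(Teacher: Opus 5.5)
Your proof is correct and follows essentially the same route as the paper. Like the paper, you sum the handling constraints to get the line-level bound $S \ge C_l H/(Ta_l-H)$ (equivalently $f_l \le (Ta_l-H)/C_l$), substitute it into the capacity constraint, check that $\Delta^*$ makes the handling constraints tight, and chain $\Delta_i^l \ge h_i/f_l \ge \Delta_i^{*l}$ with monotonicity of the objective. The only cosmetic differences are that you clear denominators and treat $H=0$ as a separate case, whereas the paper gets strictness uniformly from $f_l S = Ta_l - C_l f_l < Ta_l$, and that you verify $\Delta^*$ directly rather than solving the equality system for it.
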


By Proposition~\ref{prop:1}, the handling constraints can be taken to hold at equality in an optimal solution. If the fixed flow satisfies flow conservation, solving this equality system for the dwell times gives the closed-form expression~\eqref{eq:opt_delta}, provided the denominator is positive as required by~\eqref{eq:new_const_prop}. Substituting these dwell times into the vehicle-capacity constraints yields the linear feasibility conditions~\eqref{eq:new_cap_const_prop}. Suppressing the dependence of $h_i^l(\mathbf{x})$ on $\mathbf{x}$, when constraint \eqref{eq:RD_form_1_const_4} is tight, the handling workload must satisfy $h_i^l = f_l \Delta_i^l$, i.e., the total handling time over the period equals the dwell time per visit multiplied by the number of visits. Equation~\eqref{eq:opt_delta} allows us to represent the frequency as
\[
f_l \;=\; \frac{1}{C_l}\Bigl(T a_l - \sum_{i \in \mathcal{N}(l)} h_i^l\Bigr).
\]
Expressing $f_l$ in terms of the flow-induced workloads $h_i^l$ rather than the dwell times $\Delta_i^l$ provides a useful interpretation: the more handling operations are performed along line $l$, the lower its frequency. Moreover, the linear constraint \eqref{eq:new_const_prop} reduces to the requirement that the bracketed term above be strictly positive, that is, $\sum_{i \in \mathcal{N}(l)} h_i^l \;<\; T a_l$. If this inequality fails, no nonnegative dwell-time schedule can accommodate the required handling within a period of length $T$, and the line cannot be operated feasibly. Condition~\eqref{eq:new_cap_const_prop} ensures that the routing respects the \emph{effective} physical capacity implied by the dwell-time--induced service frequency.

Expression~\eqref{eq:opt_delta} for the optimal dwell times depends only on the flow variables $(\boldsymbol{d}, \boldsymbol{t}, \boldsymbol{u})$ and can therefore be used to obtain a reformulation purely in terms of flows. Specifically, we can substitute the expression for $\Delta_i^l$ into $(\mathbf{RD}_{x,\Delta})$ and replace constraints~\eqref{eq:RD_form_1_const_3} and \eqref{eq:RD_form_1_const_4} with the linear constraints~\eqref{eq:new_const_prop} and~\eqref{eq:new_cap_const_prop}, which ensures that the flows admit a feasible dwell-time schedule. After some algebraic manipulation of the objective function, this produces a pure flow formulation, denoted $(\mathbf{RD}_x)$, produced below. In formulation $(\mathbf{RD}_{x})$, the strict inequality in~\eqref{eq:new_const_prop} is enforced with a small $\epsilon>0$ in~\eqref{eq:RD_form_3_const_3}.

\begin{subequations}\label{model:RDa}
\begin{align}
(\mathbf{RD}_{x}) \,
\min_{d,t,u}\; &
\sum_{l \in \mathcal{L}} C_l
\frac{\displaystyle\sum_{i \in \mathcal{N}(l)}
\Big(\sum_{k \in \mathcal{D}} \alpha t_{i}^{kl} + \beta u_{i}^{kl}\Big)
\Big( \tfrac{T}{2(\alpha + \beta)} + \sum_{k \in \mathcal{D}}d_{i_{-}i}^{kl} \Big) }
{T a_{l} -
\displaystyle\sum_{k \in \mathcal{D}}\sum_{i \in \mathcal{N}(l)} (\alpha t_i^{kl} + \beta u_{i}^{kl})}
&&+ \sum_{l \in \mathcal{L}}\sum_{ij \in \mathcal{A}(l)} \sum_{k \in \mathcal{D}} c_{ij} d_{ij}^{kl}
 \label{eq:RD_form_3_obj}
\\[0.8em]
\text{s.t.}\quad
&
\sum_{l \in \mathcal{L}(i)} t_{i}^{kl} - u_{i}^{kl} =
\begin{cases}
w_{k} & \text{if } i = \mathrm{orig}(k), \\
-w_{k} & \text{if } i = \mathrm{dest}(k), \\
0 & \text{otherwise,}
\end{cases}
&& \quad \forall i \in \mathcal{N},\; k \in \mathcal{D}
\label{eq:RD_form_3_const_1}
\\
& d_{i i_{+}}^{kl} + u_i^{kl} - d_{i_{-}i}^{kl} - t_i^{kl} = 0
&& \quad \forall i \in \mathcal{N}(l),\; k \in \mathcal{D},\; l \in \mathcal{L}
\label{eq:RD_form_3_const_2}
\\
& \sum_{i \in \mathcal{N}(l)} \sum_{k \in \mathcal{D}}
\left(\alpha t_{i}^{kl} + \beta u_{i}^{kl}\right)
\le a_{l} T - \epsilon
&& \quad \forall l \in \mathcal{L}
\label{eq:RD_form_3_const_3}
\\
& \sum_{k \in \mathcal{D}} d_{ij}^{kl}
\le \frac{Q}{C_{l}}\Big(T a_{l} -
\sum_{k \in \mathcal{D}}\sum_{r \in \mathcal{N}(l)} \alpha t_{r}^{kl} + \beta u_{r}^{kl}\Big)
&& \quad \forall (i,j) \in \mathcal{A}(l),\; l \in \mathcal{L}
\label{eq:RD_form_3_const_4}
\\
& \boldsymbol{d},\; \boldsymbol{t},\; \boldsymbol{u} \ge 0
&&
\label{eq:RD_form_3_const_5}
\end{align}
\end{subequations}

Formulation $(\mathbf{RD}_{x})$ belongs to the class of non-convex, non-separable multi-commodity flow (MCF) problems with generalized capacity constraints. Its nonlinearity arises solely from the objective function \eqref{eq:RD_form_3_obj}, while all constraints are linear. Specifically, \eqref{eq:RD_form_3_const_1}–\eqref{eq:RD_form_3_const_2} are flow-conservation constraints, and \eqref{eq:RD_form_3_const_3}–\eqref{eq:RD_form_3_const_4} are capacity-type constraints. The objective \eqref{eq:RD_form_3_obj} is a linear term plus nonconvex fractional quadratic terms, driven by indefinite bilinear interactions between $\boldsymbol{t}/\boldsymbol{u}$ and $\boldsymbol{d}$.

Nonlinear MCF problems have been studied extensively and are often classified along three dimensions: convexity, separability, and integrality. Convexity refers to whether the arc cost functions are convex; separability indicates whether the cost on an arc $a$ depends only on the flow on $a$ as opposed to flows on other arcs; and integrality distinguishes between discrete and continuous flow variables. The simplest case, convex and separable MCF, in either integer or continuous form, has been well studied (see Chapters~8--10 of \citet{bertsekas1998network}), and can be handled efficiently by network flow and convex optimization methods. In contrast, nonconvex MCF problems inherit the full difficulties of general nonconvex optimization, and these difficulties are further compounded when separability is lost \citep{hochbaum2007complexity}. Even the nonconvex but separable case is highly challenging, as many standard structural properties of optimal solutions break down \citep{mahey2017multicommodity}.

A standard approach for global optimization of nonconvex problems is spatial branch-and-bound \citep{tawarmalani2013convexification}, which solves convex relaxations, often based on McCormick envelopes, at nodes of a search tree. The strength of these relaxations depends heavily on variable bounds, which branching gradually tightens. As shown in Section~\ref{sec:num_res}, off-the-shelf spatial branch-and-bound struggles to produce strong dual bounds for our formulations, even with generous computation time.

%% file: sections/04-primal-dual-certificates.tex
\section{Obtaining Strong Primal and Dual Certificates}
\label{sec:primal_dual_certificates}
Although $(\mathbf{RD}_{x})$ is difficult for generic global optimization, its structure enables fast primal solutions and strong dual bounds. Subsection~\ref{ssec:structure} formalizes this structure, and Subsection~\ref{ssec:convex_relaxations} derives two convex relaxations that yield feasible solutions and non-trivial optimality certificates. Section~\ref{sec:convex_frontier_bound} develops a residual-frontier framework for similar problems, yielding even stronger lower bounds.

\subsection{Structure of \texorpdfstring{$(\mathbf{RD}_{x})$}{(RDx)}}
\label{ssec:structure}

We first isolate the feasible region. Since all constraints in $(\mathbf{RD}_{x})$ are linear, this region is polyhedral, defined as $\Xset \coloneqq \{\boldsymbol{x}=(\boldsymbol{d},\boldsymbol{t},\boldsymbol{u}) : \text{\eqref{eq:RD_form_3_const_1}--\eqref{eq:RD_form_3_const_5} hold}\}$. Therefore, the nonlinearity in $(\mathbf{RD}_{x})$ lies in the objective, which we write as
$F(\boldsymbol{x}) =\phi(\boldsymbol{x})+r(\boldsymbol{x}) + f_{\mathrm{lin}}(\boldsymbol{x})$, where,
\[ \phi(\boldsymbol{x}) = \sum_{l\in\mathcal{L}} C_l \frac{\frac{T}{2(\alpha+\beta)} \sum\limits_{ij\in\mathcal{A}(l)} \sum\limits_{k\in\mathcal{D}} (\alpha t_i^{kl} + \beta u_i^{kl})} {T a_l - \sum\limits_{i\in\mathcal{N}(l)} \sum\limits_{k\in\mathcal{D}} (\alpha t_i^{kl} + \beta u_i^{kl})},
\quad
r(\boldsymbol{x}) = \sum_{l\in\mathcal{L}} C_l \frac{\sum\limits_{ij\in\mathcal{A}(l)} \Big[\big(\sum\limits_{k \in \mathcal{D}} (\alpha t_j^{kl} + \beta u_j^{kl})\big) \big(\sum\limits_{k \in \mathcal{D}} d_{ij}^{kl}\big)\Big]} {T a_l - \sum\limits_{i\in\mathcal{A}(l)} \sum\limits_{k \in \mathcal{D}} (\alpha t_i^{kl} + \beta u_i^{kl})} \quad \]

and $f_{\mathrm{lin}}(\boldsymbol{x}) = \sum_{l\in\mathcal{L}} \sum_{ij\in\mathcal{A}(l)} \sum_{k\in\mathcal{D}} c_{ij} d_{ij}^{kl}$. This decomposition corresponds to the three time-cost components in the original objective of $(\mathbf{RD}_{x,\Delta})$: $f_{\mathrm{lin}}(\boldsymbol{x})$ is the fixed driving-time cost, $\phi(\boldsymbol{x})$ is the transfer-time cost, and $r(\boldsymbol{x})$ is the dwell-time cost experienced by parcels after boarding a vehicle. This last term includes both the time penalty incurred by parcels that unload at a hub and the in-vehicle delay incurred by parcels that remain on board while the vehicle dwells to load and unload other parcels.

For arc $(i,j) \in \mathcal{A}(l)$ for line $l \in\mathcal{L}$, define the aggregate driving flow $X_{ij}^{l} = \sum_{k\in\mathcal{D}} d_{ij}^{kl}$, and let $\boldsymbol{X}_l$ collect these values over the arcs of line $l$. Then, both non-linear functions can be written more elegantly as follows, with bold indicating vectors and $\mathbf{h}^{T}$ indicating the transpose of the vector $\mathbf{h}$

\[
\phi(\boldsymbol{x}) = \sum_{l\in\mathcal{L}} C_l \frac{ \frac{T}{2(\alpha+\beta)}\boldsymbol{h}_{l}^{\top} \boldsymbol{1}}{Ta_l - \boldsymbol{h}_{l}^{\top} \boldsymbol{1}},
\quad r(\boldsymbol{x}) = \sum_{l\in\mathcal{L}} C_l \frac{ \boldsymbol{h}_{l}^{\top}\boldsymbol{X}_{l}}{Ta_l - \boldsymbol{h}_{l}^{\top} \boldsymbol{1}}. \quad \]

In this form, several structural features become apparent. Both $\phi(\boldsymbol{x})$ and $r(\boldsymbol{x})$ are expressed as sums of line-wise separable fractional terms, and for each line, the corresponding terms in $\phi(\boldsymbol{x})$ and $r(\boldsymbol{x})$ share the same affine denominator $Ta_{l} - \boldsymbol{h}_{l}^{\top} \boldsymbol{1}$. Moreover, on each line, the numerator is a dot product between the vector of handling times $\boldsymbol{h}$ at hubs along the line and another vector: in $\phi(\boldsymbol{x})$, this second vector is simply the all-ones vector $\boldsymbol{1}$ and a constant $\frac{T}{2(\alpha+\beta)}$, whereas in $r(\boldsymbol{x})$ it is the vector of aggregate driving flows along the line $\boldsymbol{X}_l$. Finally, since constraint~\eqref{eq:RD_form_3_const_3} enforces that the denominator $Ta_{l} - \boldsymbol{h}_{l}^{\top} \boldsymbol{1}$ is strictly positive for every line $l$, each fraction in $\phi(\boldsymbol{x})$ is convex on the feasible domain, and thus $\phi(\boldsymbol{x})$ is convex over the feasible region.

The cost function $\phi(\boldsymbol{x})$ has a structure similar to Kleinrock's classical queueing-based congestion function $\frac{x}{c-x}$ in \citet{kleinrock1964communicationnets}, which arises from an $M/M/1$ model with Poisson arrivals and exponential service. In that setting, $\frac{x}{c-x}$ is proportional to the total delay accumulated per unit time by traffic of rate $x$ served by an arc of capacity $c$. In our case, an analogous congestion term is defined not for an individual arc but over an entire line as $\frac{H_l}{T a_l - H_l}$, where $H_l = \boldsymbol{h}_l^{\top}\boldsymbol{1}$. Here, the capacity of line $l$ is the total available minutes per period to drive or handle, namely $T a_l$, and the total time dedicated to handling demand along the line is $H_l$.
Defining the convex portion of the objective as $g(\boldsymbol{x}) \coloneqq f_{\mathrm{lin}}(\boldsymbol{x}) + \phi(\boldsymbol{x})$, we can express $(\mathbf{RD}_{x})$ compactly as minimizing the sum of a convex function $g(\boldsymbol{x})$ and a nonconvex function $r(\boldsymbol{x})$:
\begin{equation}
(\mathbf{RD}_{x}) \qquad
F^* = \min_{\boldsymbol{x} \in \Xset}
\left\{ g(\boldsymbol{x}) + r(\boldsymbol{x}) \right\}.
\label{eq:RD_x_nice}
\end{equation}

\subsection{Convex Relaxations}
\label{ssec:convex_relaxations}
Writing $(\mathbf{RD}_{x})$ as in~\eqref{eq:RD_x_nice} identifies the natural convex-relaxation strategy, namely replacing the nonconvex term $r(\boldsymbol{x})$ with a convex lower approximation while leaving the feasible region unchanged. Specifically, suppose that $\underline{r}$ is a convex function satisfying $\underline{r}(\boldsymbol{x}) \le r(\boldsymbol{x}) \forall \boldsymbol{x}\in\Xset$. This gives the convex relaxation
\[
LB^* =
\min_{\boldsymbol{x} \in \Xset}
\left\{ g(\boldsymbol{x}) + \underline{r}(\boldsymbol{x}) \right\}.
\]
Since $\underline{r}$ underestimates $r$ on the same feasible region, $LB^*$ is a valid lower bound on the optimal value of $(\mathbf{RD}_{x})$, i.e., $LB^*\le F^*$. At the same time, any optimizer $\boldsymbol{x}_{LB}$ of the relaxation is feasible for the original problem, so evaluating the original objective at this point gives a valid upper bound, $F^*\le F(\boldsymbol{x}_{LB})$. Thus, obtaining strong upper and lower bounds amounts to finding a strong convex underestimator $\underline{r}$.

The simplest choice is $\underline{r}(\boldsymbol{x})=0$, since $r(\boldsymbol{x})\ge 0$ on $\Xset$. This drops the dwell-time interaction term while retaining the convex transfer term $\phi(\boldsymbol{x})$ and the linear driving cost. We denote the resulting relaxation by $(\mathbf{RD}_{x}^{\mathrm{transfer}})$, where $LB_{\mathrm{tr}}^*=\min_{\boldsymbol{x}\in\Xset}\{g(\boldsymbol{x})\}$. The name reflects the fact that the only nonlinear term retained in the objective is the transfer-time term $\phi(\boldsymbol{x})$. Conveniently, this convex program can be written as a second-order conic program (SOCP). Define $y_l=Ta_l-\boldsymbol{h}_l^\top\boldsymbol{1}$. The line-$l$ fraction in $\phi(\boldsymbol{x})$ can then be written as

\[
C_l\frac{\frac{T}{2(\alpha+\beta)}\boldsymbol{h}_l^\top\boldsymbol{1}}{Ta_l-\boldsymbol{h}_l^\top\boldsymbol{1}}
=
\frac{T}{2(\alpha+\beta)}\left(Ta_l\frac{C_l}{y_l}-C_l\right).
\]
Introducing an epigraph variable $z_l\ge C_l/y_l$ gives the rotated-SOC representable constraint $z_l y_l\ge C_l$ with $z_l\ge0$; the positivity of $y_l$ follows from $\boldsymbol{x}\in\Xset$. This yields the compact SOCP
\begin{equation}
\label{model:RDa_socp}
\begin{aligned}
(\mathbf{RD}_{x}^{\mathrm{transfer}})\qquad
LB_{\mathrm{tr}}^*
=\min_{\boldsymbol{x},\boldsymbol{y},\boldsymbol{z}}\quad
& f_{\mathrm{lin}}(\boldsymbol{x})
+ \frac{T}{2(\alpha+\beta)}
\sum_{l\in\mathcal{L}}\left(Ta_l z_l-C_l\right) \\
\mathrm{s.t.}\quad
& \boldsymbol{x}\in\Xset,\quad
y_l=Ta_l-\boldsymbol{h}_l^\top\boldsymbol{1},\quad
z_l y_l\ge C_l,\quad z_l\ge0,
\qquad \forall l\in\mathcal{L}.
\end{aligned}
\end{equation}
The only constraints added to the feasible flow region are the definitions of $y_l$ and the rotated-SOC epigraph constraints for $C_l/y_l$. This relaxation captures how dwell times $\boldsymbol{\Delta}$ affect each line's \emph{cycle length} and, consequently, expected transfer times. However, it does not account for unloading times incurred when parcels alight from a line, nor for the time that on-board parcels spend waiting at hubs while the vehicle loads and unloads other parcels. In the original bilinear formulation $(\mathbf{RD}_{x,\Delta})$, these effects are captured by the term $d_{ij}^{kl}\Delta_j^l$, which corresponds to $r(\boldsymbol{x})$ in the pure flow formulation.

Instead of dropping the entire non-convex term $r(\boldsymbol{x})$, we can get a strictly stronger lower bound by replacing it with a strictly positive convex under-estimator over the feasible region. Constructing such an under-estimator is, however, not immediate as we need an effective lower bound on the bilinear form $\boldsymbol{h}_l^\top\boldsymbol{X}_l$ for each line $l$. Proposition~\ref{prop:x_lb} provides a structural lower bound on the driving flow that we leverage to construct such an under-estimator.

\begin{proposition}
\label{prop:x_lb}
Let $\mathbf{x}^\star = (\boldsymbol{d}^\star,\boldsymbol{t}^\star,\boldsymbol{u}^\star)$ be any optimal solution to $(\mathbf{RD}_{x})$. For every line $l \in \mathcal{L}$, consecutive hubs $(i,j) \in \mathcal{A}(l)$, and commodity $k \in \mathcal{D}$, we have $d_{ij}^{kl\,\star} \;\ge\; u_{j}^{kl\,\star}$.
\end{proposition}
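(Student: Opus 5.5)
The plan is to argue by contradiction: a violation of the inequality forces a wasteful ``board-then-immediately-alight'' cycle at hub $j$, and removing that cycle strictly improves the objective.

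\textbf{Step 1 (reduce to a cycle).} Fix $l$, $(i,j)\in\mathcal{A}(l)$ and $k$, so that $i=j_{-}$. Flow conservation \eqref{eq:RD_form_3_const_2} at the line node $V_j^l$ reads
\[
d_{j j_{+}}^{kl}+u_j^{kl}=d_{ij}^{kl}+t_j^{kl}.
\]
Hence $d_{ij}^{kl\star}<u_j^{kl\star}$ is equivalent to $t_j^{kl\star}>d_{jj_{+}}^{kl\star}$. Interpreting this, the amount $\varepsilon:=u_j^{kl\star}-d_{ij}^{kl\star}>0$ of commodity $k$ alighting at $j$ cannot have arrived on the travel arc from $i$. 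It must therefore have entered $V_j^l$ through the boarding arc, so it traverses the two-arc cycle $IO_j\to V_j^l\to IO_j$. Note that $\varepsilon\le t_j^{kl\star}$ by the identity above, since $d_{jj_+}^{kl\star}\ge 0$.

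\textbf{Step 2 (feasibility of the cancelled cycle).} Define $\tilde{\mathbf{x}}$ from $\mathbf{x}^\star$ by decreasing $t_j^{kl}$ and $u_j^{kl}$ by $\varepsilon$ and leaving everything else unchanged. Both variables stay nonnegative, since $\varepsilon\le t_j^{kl\star}$ and $\varepsilon\le u_j^{kl\star}$. The conservation constraints are preserved:
\begin{itemize}
\item at $IO_j$ in \eqref{eq:RD_form_3_const_1}, the term $t-u$ is unchanged;
\item at $V_j^l$ in \eqref{eq:RD_form_3_const_2}, the changes in $u$ and $t$ cancel.
\end{itemize}
The workload $h_j^l$ drops by $(\alpha+\beta)\varepsilon$, and no other workload changes. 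Consequently the left-hand side of \eqref{eq:RD_form_3_const_3} decreases. The right-hand side of \eqref{eq:RD_form_3_const_4} increases while the $d$'s are unchanged. Hence $\tilde{\mathbf{x}}\in\Xset$.

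\textbf{Step 3 (strict improvement).} I will compare $F=f_{\mathrm{lin}}+\phi+r$ term by term, using the line-wise form with $H_l=\boldsymbol{h}_l^\top\boldsymbol{1}$.
\begin{itemize}
\item $f_{\mathrm{lin}}$ is unchanged, since no $d$ changes.
\item In $r$, the line-$l$ numerator $\boldsymbol{h}_l^\top\boldsymbol{X}_l$ is nonnegative and does not increase. The denominator $Ta_l-H_l$ is positive and strictly increases. So $r$ does not increase.
\item The line-$l$ term of $\phi$ equals $C_l\frac{T}{2(\alpha+\beta)}\frac{H_l}{Ta_l-H_l}$. This is strictly increasing in $H_l$ on $[0,Ta_l)$, and $H_l$ strictly decreases because $\alpha+\beta>0$, which is implicit in the formulation's division by $\alpha+\beta$. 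So this term strictly decreases, and the other lines' terms are unchanged.
\end{itemize}
Thus $F(\tilde{\mathbf{x}})<F(\mathbf{x}^\star)$, contradicting the optimality of $\mathbf{x}^\star$.

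The main obstacle is Step 1, namely correctly identifying the violation with a boarding-alighting cycle through conservation at $V_j^l$ and checking that $\varepsilon\le t_j^{kl\star}$. Once that is done, the monotonicity argument in Step 3 is routine. The only subtlety there is making sure strictness comes from $\phi$, and not from $r$, which may be zero.
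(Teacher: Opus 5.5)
Your proof is correct and takes essentially the same route as the paper. The paper first cancels $\min\{t_j^{kl\star},u_j^{kl\star}\}$ to show that boarding and alighting never occur together at the same $(j,k,l)$, then reads off $d_{ij}^{kl\star}\ge u_j^{kl\star}$ from conservation at $V_j^l$; you instead cancel exactly the violating amount $u_j^{kl\star}-d_{ij}^{kl\star}$, with the same feasibility and monotonicity checks, and your remark that strictness must come from $\phi$ (since $r$ may vanish) is a slightly more careful version of the paper's claim that the objective is strictly increasing in $t$ and $u$.
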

Proposition~\ref{prop:x_lb} has a simple interpretation. The flow unloading from line $l$ at hub $j$ cannot exceed the flow entering $j$ on that line. Otherwise, some parcels would board and immediately unload at $j$, and removing this redundant movement would preserve feasibility and weakly improve the objective. Therefore, a valid convex under-estimator for the numerator is derived as follows, where we define $U_{j}^{l} \coloneq \sum_{k \in \mathcal{D}}u_{j}^{kl}$ and $T_{j}^{l} \coloneq \sum_{k \in \mathcal{D}}t_{j}^{kl}$:
\[
\sum_{ij \in \mathcal{A}(l)} h_{j}^{l} X_{ij}^{l} = \sum_{ij \in \mathcal{A}(l)} (\alpha T_{j}^{l} + \beta U_{j}^{l}) X_{ij}^{l} \geq  \sum_{ij \in \mathcal{A}(l)} (\alpha T_{j}^{l} + \beta U_{j}^{l}) U_{j}^{l} \geq \sum_{ij \in \mathcal{A}(l)} \beta (U_{j}^{l})^{2}
\]
where the second step arises from Proposition~\ref{prop:x_lb} where $X_{ij}^{l} \geq U_{j}^{l}$. Hence, a valid under-estimator for fraction $l$ of $r(\boldsymbol{x})$, denoted $r_{l}(\boldsymbol{x})$, is $\underline{r}_l(\boldsymbol{x})$
\[
r_l(\boldsymbol{x})
\;\ge\;
\underline{r}_l(\boldsymbol{x})
= C_l \beta \sum_{i\in\mathcal{N}(l)} \frac{\bigl(U_i^l\bigr)^2}{y_l}.
\]

Each term $(U_i^l)^2/y_l$ is quadratic-over-linear and is therefore convex for $y_l>0$. Consequently, $\underline{r}_l(\boldsymbol{x})$, and hence $\underline{r}(\boldsymbol{x}) := \sum_l \underline{r}_l(\boldsymbol{x})$, can be represented using rotated SOC constraints. Introducing auxiliary variables $s_i^l$, we obtain the SOCP that we denote $(\mathbf{RD}_{x}^{\mathrm{under}})$

\begin{equation}
  \label{model:RDa_under}
  \begin{aligned}
  (\mathbf{RD}_{x}^{\mathrm{under}})\qquad
  LB_{\mathrm{under}}^*
  =\min_{\boldsymbol{x},\boldsymbol{y},\boldsymbol{z}, \boldsymbol{s}}\quad
  & f_{\mathrm{lin}}(\boldsymbol{x})
  + \frac{T}{2(\alpha+\beta)}
  \sum_{l\in\mathcal{L}}\left(Ta_l z_l-C_l\right) + \sum_{l\in\mathcal{L}}\sum_{i\in\mathcal{N}(l)} 2 C_l \beta\, s_i^l \\
  \mathrm{s.t.}\quad
  & \boldsymbol{x}\in\Xset,\quad
  y_l=Ta_l-\boldsymbol{h}_l^\top\boldsymbol{1},\quad
  z_l y_l\ge C_l,\quad z_l\ge0,
  \qquad \forall l\in\mathcal{L},\\
  & \bigl(U_i^l\bigr)^2 \le 2y_l s_i^l,\quad s_i^l\ge0,
  \qquad \forall l\in\mathcal{L},\ i\in\mathcal{N}(l).
  \end{aligned}
  \end{equation}

Moreover, $LB_{\mathrm{under}}^* > LB_{\mathrm{tr}}^{*}$ because this relaxation uses a strictly positive convex underestimator of $r(\boldsymbol{x})$ instead of dropping the term. As before, any optimizer of $(\mathbf{RD}_{x}^{\mathrm{under}})$ is feasible for $(\mathbf{RD}_{x})$, so evaluating the original objective gives a valid upper bound.

The numerical results in Section~\ref{sec:num_res} show that both SOCP relaxations solve quickly, even for large instances, and produce meaningful optimality certificates. Dropping $r(\boldsymbol{x})$ yields certified gaps of around 10--15\% in our test instances, while the positive underestimator reduces these certificates to roughly 7--10\%. At the same time, a consistent trend emerges across all instances: \emph{the relaxation obtained by dropping $r(\boldsymbol{x})$ often returns primal solutions that are much closer to globally optimal than their lower bounds certify}. In instances where off-the-shelf spatial branch-and-bound can eventually prove optimality, the realized primal gap is often much smaller than the certified gap. For example, a relaxation solution with a 10\% certified gap can be within 1\% of the proven optimum.

%% file: sections/05-convex-frontier-bound.tex
\section{Convex Frontier Bound}
\label{sec:convex_frontier_bound}

The empirical behavior from the computation results of the SOCPs from Section~\ref{sec:primal_dual_certificates} suggests that $r(\boldsymbol{x})$ shifts objective values more than it reshapes the objective landscape. Unlike objectives with naturally opposing terms, such as revenue and cost in profit maximization, improving $g(\boldsymbol{x})$ does not appear to require a large penalty in $r(\boldsymbol{x})$. We formalize this alignment and use it to construct stronger lower bounds, beginning with an abstract setting that contains $(\mathbf{RD}_{x})$ as a special case.
\subsection{The residual frontier}
\label{subsec:frontier_general}
Consider the optimization problem $F^* := \min_{x \in \Xset} F(\boldsymbol{x}) \coloneqq g(\boldsymbol{x})+r(\boldsymbol{x})$ where  $\Xset$ is a linear feasible region, $g(\boldsymbol{x})\geq 0$ is continuous and convex, and $r(\boldsymbol{x})\geq 0$ is a continuous nonconvex residual term. A valid relaxation is obtained by dropping $r(\boldsymbol{x})$
\[
    g_c := \min\{g(\boldsymbol{x}):x\in\Xset\},
    \qquad
    x_c \in \operatorname*{arg\,min}\{g(\boldsymbol{x}):x\in\Xset\}.
    \label{eq:gc_frontier}
\]
The gap left to certify from the convex relaxation is $F(x_c)-g_c=r(x_c)$, i.e., the residual term evaluated at $x_c$. Let $r_c:=r(x_c)$ and $F_c:=F(x_c)=g_c+r_c$. Define the value function
\begin{equation}
    m(\gamma)
    :=
    \min\{r(\boldsymbol{x}):x\in\Xset,\; g(\boldsymbol{x})\le g_c+\gamma\}.
    \label{eq:m_gamma_def}
\end{equation}
We call $m$ the \emph{residual frontier}. For each budget $\gamma$, it gives the smallest residual value attainable while allowing the convex component $g(\boldsymbol{x})$ to increase by at most $\gamma$ above its minimum. This frontier captures the tradeoff hidden by the relaxation that drops $r(\boldsymbol{x})$. That relaxation identifies a minimizer of $g(\boldsymbol{x})$, but does not indicate whether a modest increase in $g(\boldsymbol{x})$ could produce a much smaller residual value. The function $m$ makes this tradeoff explicit. Since $r(\boldsymbol{x})\ge 0$, it is sufficient to consider $\gamma\in[0,r_c]$. Indeed, if $g(\boldsymbol{x})-g_c>r_c$, then $F(\boldsymbol{x})=g(\boldsymbol{x})+r(\boldsymbol{x})>g_c+r_c=F(x_c)$, so such a point cannot improve on the incumbent. The central observation is that this frontier gives an exact representation of the optimality gap of the convex-relaxation incumbent.

\begin{theorem}[Frontier representation of the optimal value]
\label{thm:frontier_gap}
The optimal value can be written as
\(\displaystyle F^*=g_c+\min_{0\le \gamma\le r_c}\{\gamma+m(\gamma)\}\). Equivalently,
\(\displaystyle F(x_c)-F^*=\max_{0\le \gamma\le r_c}\{r_c-\gamma-m(\gamma)\}\).
\end{theorem}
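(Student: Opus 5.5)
We will prove the first identity by establishing the two inequalities separately; the second identity then follows by subtracting from $F(x_c)=g_c+r_c$. Write $\Phi(\gamma):=\gamma+m(\gamma)$. The first point to record is that $m$ is well defined on $[0,r_c]$: for every $\gamma\ge 0$ the feasible set $\{\boldsymbol{x}\in\Xset: g(\boldsymbol{x})\le g_c+\gamma\}$ contains $x_c$, so it is nonempty. We will treat ``min'' as ``inf'' throughout, and only at the end argue that the infima are attained where needed. Attainment could be handled either by assuming, as in $(\mathbf{RD}_x)$, that the sublevel sets are compact (the flows are bounded by demand), or by noting that the identity holds for infima verbatim.

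For the inequality $F^*\ge g_c+\inf_{0\le\gamma\le r_c}\Phi(\gamma)$, we take any $\boldsymbol{x}\in\Xset$ and set $\gamma_x:=g(\boldsymbol{x})-g_c\ge 0$, which is nonnegative by the definition of $g_c$.

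\emph{Case $\gamma_x\le r_c$.} Here $\boldsymbol{x}$ is feasible in the problem defining $m(\gamma_x)$, so $r(\boldsymbol{x})\ge m(\gamma_x)$. Hence
\[
F(\boldsymbol{x})=g_c+\gamma_x+r(\boldsymbol{x})\ge g_c+\Phi(\gamma_x).
\]

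\emph{Case $\gamma_x>r_c$.} Since $r\ge 0$, we get $F(\boldsymbol{x})>g_c+r_c$. Moreover $m(0)\le r(x_c)=r_c$, because $x_c$ is feasible at budget $0$, so $g_c+r_c\ge g_c+\Phi(0)$.

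In both cases $F(\boldsymbol{x})\ge g_c+\inf\Phi$, and taking the infimum over $\boldsymbol{x}$ gives the inequality.

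For the reverse inequality $F^*\le g_c+\Phi(\gamma)$ for every $\gamma\in[0,r_c]$, we fix $\gamma$ and take $\boldsymbol{x}$ feasible for $m(\gamma)$ with $r(\boldsymbol{x})\le m(\gamma)+\varepsilon$. Then
\[
F^*\le F(\boldsymbol{x})=g(\boldsymbol{x})+r(\boldsymbol{x})\le g_c+\gamma+m(\gamma)+\varepsilon.
\]
Letting $\varepsilon\to0$ and taking the infimum over $\gamma$ gives the claim.

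The main obstacle is the bookkeeping of attainment, i.e.\ justifying ``min'' rather than ``inf''. We expect this to be mild. On the side of $F^*$, if a minimizer $x^*$ exists, then $\gamma^*:=g(x^*)-g_c$ lies in $[0,r_c]$ (otherwise $F(x^*)>F(x_c)$, contradicting optimality), and by the first inequality $\Phi$ attains its infimum at $\gamma^*$. So the minimum over $\gamma$ is attained whenever $F^*$ is. Existence of a minimizer of $F$ follows from continuity of $F$ on the compact set $\{\boldsymbol{x}\in\Xset : g(\boldsymbol{x})\le g_c+r_c\}$, which we would assume or verify from boundedness of flows. Finally, the equivalent gap form is obtained by subtracting the identity from $F(x_c)=g_c+r_c$, using $-\min\{\cdot\}=\max\{-\cdot\}$.
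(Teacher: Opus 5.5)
Your proposal is correct and follows essentially the same route as the paper. The lower bound uses $\gamma(\boldsymbol{x})=g(\boldsymbol{x})-g_c$ together with $r(\boldsymbol{x})\ge m(\gamma(\boldsymbol{x}))$, the upper bound uses $\varepsilon$-optimal solutions of $m(\gamma)$, and the gap form follows by subtracting from $F_c=g_c+r_c$.

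The differences are minor. You fold the restriction to $[0,r_c]$ into a case split using $m(0)\le r_c$, whereas the paper first proves the identity over all $\gamma\ge 0$ and then discards $\gamma>r_c$. You also explicitly justify that the minimum is attained, which the paper takes for granted.
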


The intuition of Theorem~\ref{thm:frontier_gap} is as follows. Every feasible solution can be viewed as choosing a deterioration level
\(\gamma=g(\boldsymbol{x})-g_c\) relative to the minimum possible value of the convex component. Once this deterioration level is fixed, the best residual one could hope for is $m(\gamma)$. Hence the best objective achievable at that level is $g_c+\gamma+m(\gamma)$, and optimizing over all possible deterioration levels gives the representation above. In this view, $r_c-m(\gamma)$ is the improvement in the residual, while $\gamma$ is the price paid in the convex part. The net improvement over $x_c$ is therefore $\bigl(r_c-m(\gamma)\bigr)-\gamma=r_c-\gamma-m(\gamma)$, which gives the equivalent gap expression.

Theorem~\ref{thm:frontier_gap} provides a direct way to certify the quality of the convex-relaxation incumbent. Define the line $f(\gamma):=r_c-\gamma$. If the frontier lies above this line, then the reduction in residual is never large enough to compensate for the deterioration in the convex part, outlined in the following remark.

\begin{remark}[Optimality certificates]
\label{rem:frontier_certificate}
The incumbent $x_c$ is globally optimal if and only if $m(\gamma)\ge r_c-\gamma$ for all $\gamma\in[0,r_c]$. More generally, $x_c$ is $\epsilon$-optimal if and only if $m(\gamma)\ge r_c-\gamma-\epsilon$ for all $\gamma\in[0,r_c]$.
\end{remark}

From Remark~\ref{rem:frontier_certificate}, certifying a small optimality gap is equivalent to certifying that the residual frontier does not dip far below the line $r_c-\gamma$. If $m(\gamma)$ is above the line at some value of $\gamma$, then even the best residual attainable with that much convex deterioration cannot improve upon $x_c$. If $m(\gamma)$ falls below the line, then the vertical distance $r_c-\gamma-m(\gamma)$ is exactly the improvement that could be achieved at that tradeoff level. This is the sense in which the frontier measures the alignment between the two objective components: the relaxation solution is good precisely when any attempt to reduce $r(\boldsymbol{x})$ requires a comparable increase in $g(\boldsymbol{x})$.

Computing the exact frontier is generally as difficult as solving the original nonconvex problem, because each value $m(\gamma)$ minimizes the nonconvex residual $r(\boldsymbol{x})$ over a convex level set. However, its monotonicity is immediate and sufficient for our certificate. Since increasing $\gamma$ enlarges the feasible set in~\eqref{eq:m_gamma_def}, $m(\gamma)$ is nonincreasing. We use this property to build a certified piecewise-constant lower approximation. Let
\[
    0=\gamma_0<\gamma_1<\cdots<\gamma_B=r_c
\]
be a grid, and suppose that for each endpoint $\gamma_b$ we compute a valid lower bound $\ell_b\le m(\gamma_b)$. Since $m$ is nonincreasing, for every $\gamma\in[\gamma_{b-1},\gamma_b]$ we have $m(\gamma)\ge m(\gamma_b)\ge \ell_b$. Therefore, any solution whose convex deterioration lies in the interval $[\gamma_{b-1},\gamma_b]$ has objective value at least
$g_c+\gamma_{b-1}+\ell_b$. This gives the certified staircase bound
\begin{equation}
    LB_{\rm front}
    =
    g_c+
    \min_{b=1,\ldots,B}
    \{\gamma_{b-1}+\ell_b\}.
    \label{eq:frontier_staircase_bound}
\end{equation}

\input{graphics/m_gamma.tex}
The weakest stair, i.e., the interval attaining the minimum in~\eqref{eq:frontier_staircase_bound}, defines the certified bound. Figure~\ref{fig:frontier_schematic} illustrates this construction. Rather than approximating the full frontier, the certificate only needs endpoint lower bounds and monotonicity: each endpoint bound extends backward over its grid interval, and the largest remaining distance to the target line gives a valid optimality certificate.

\subsection{Computing endpoint bounds for RLN-HT}
\label{subsec:aggregate_sdp}
We now compute lower bounds $\ell_b \le m(\gamma_b)$ for RLN-HT using two ideas. First, each endpoint problem is restricted by $g(x)\le g_c+\gamma_b$, which can be much smaller than the full feasible region and therefore supports tighter variable bounds. Second, rather than using the term-wise underestimators from Section~\ref{ssec:convex_relaxations}, we build a semi-definite programming (SDP) relaxation that better captures the interaction between the line-level handling-time and onboard-flow vectors. For an endpoint $\gamma_b$, we work on the incumbent-relevant localized set
\[
    \Xset_b^{F_{c}}:=\{x\in\Xset: g(x)\le g_c+\gamma_b,\; g(x)+\underline r_b(x)\le F_{c}\},
    \label{eq:Xb_frontier_relevant}
\]
where $\underline r_b$ is any convex underestimator of $r$ valid on the localized domain. The second inequality is a safe cutoff as any point improving the incumbent satisfies $g(x)+r(x)\le F_c$, and hence $g(x)+\underline r_b(x)\le F_c$. Thus any bounds and cuts are constructed on $\Xset_b^{F_c}$ rather than on the full feasible region, which substantially tightens variable bounds and strengthen the resulting convex relaxation.

We now specialize this idea to the RLN-HT residual. As shown in Section~\ref{ssec:structure}, $r(x)$ is a sum over lines, with each line-$l$ term proportional to $\boldsymbol{h}_l^\top\boldsymbol{X}_l/y_l$. Let $z_l=(\boldsymbol{h}_l,\boldsymbol{X}_l)$, and let $e_l$ select the handling coordinates, so that $H_l=e_l^\top z_l$ and $y_l=Ta_{l}-e_l^\top z_l$. Then
\[
    r_l(x)=C_l\frac{z_l^\top M_l z_l}{y_l},
    \qquad
    M_l=\frac12\begin{pmatrix}0&I\\ I&0\end{pmatrix},
\]
where $z_l^\top M_l z_l=\boldsymbol{h}_l^\top\boldsymbol{X}_l$. Since $M_l$ is indefinite, this term is nonconvex. For each line, introduce variables $s_l$, $w_l$, and $Z_l$ representing $1/y_l$, $z_l/y_l$, and $z_lz_l^\top/y_l$, respectively. This can be written neatly as
\begin{equation}
    \begin{pmatrix}Z_l&w_l\\ w_l^\top&s_l\end{pmatrix}
    =\frac1{y_l}\begin{pmatrix}z_l\\1\end{pmatrix}\begin{pmatrix}z_l\\1\end{pmatrix}^{\!\top}.
    \label{eq:exact_perspective_lift}
\end{equation}
The matrix in~\eqref{eq:exact_perspective_lift} is PSD and rank one. Moreover, the denominator identity $y_l=Ta_{l}-e_l^\top z_l$ implies the linear relations $Ta_{l}s_l-e_l^\top w_l=1$ and $Ta_{l}w_l-Z_le_l=z_l$. We can now obtain a SDP relaxation by keeping the PSD constraint on~\eqref{eq:exact_perspective_lift}, but dropping rank one as it is the source of non-convexity and difficulty. The resulting SDP is defined as:
\begin{subequations}
\label{model:frontier_sdp_endpoint}

\begin{align}
    \ell_b=\min \quad
    &\sum_{l\in\mathcal L} C_l\langle M_l,Z_l\rangle \label{eq:frontier_sdp_obj}\\
    \text{s.t.}\quad
    &x\in \Xset_b^{F_{c}},\quad z_l=E_lx,
      \quad y_l=Ta_{l}-e_l^\top z_l, && l\in\mathcal L, \label{eq:frontier_sdp_link}\\
    &Ta_{l}s_l-e_l^\top w_l=1,
      \quad Ta_{l}w_l-Z_le_l=z_l, && l\in\mathcal L, \label{eq:frontier_sdp_identities}\\
    &\begin{pmatrix}Z_l&w_l\\ w_l^\top&s_l\end{pmatrix}\succeq0, && l\in\mathcal L. \label{eq:frontier_sdp_psd}
\end{align}
\end{subequations}
Every feasible routing in $\Xset_b^{F_{c}}$ has an exact lift satisfying these constraints with objective value equal to its residual cost. Consequently, dropping rank one gives a valid lower bound on the residual over the localized set. The SDP remains modest in size because the matrix blocks are line-level as there is one block of dimension $2|\mathcal N(l)|+1$ per line $l$, while the full multicommodity flow vector $x$ appears only in the sparse flow and localization constraints.

The base SDP can still have a significant relaxation gap, so each endpoint relaxation is strengthened using bounds computed on the localized set $\Xset_b^{F_c}$. We first perform optimization-based bound tightening (OBBT) for the denominators $y_l$, the coordinates of $z_l$, and selected projections $q^\top z_l$. These bounds support standard fractional envelopes for $w_{lj}=z_{lj}/y_l$ and McCormick constraints for the lifted identities $Z_{ljk}=z_{lj}w_{lk}$ and $Z_{ljk}=z_{lk}w_{lj}$. The resulting SDP with these additional constraints is described in more detail in Appendix~\ref{app:frontier}.

The remaining looseness comes from dropping rank one, especially in directions where handling and onboard flow move in opposite ways. To see this, write $h_l^\top X_l=\frac14\|h_l+X_l\|_2^2-\frac14\|h_l-X_l\|_2^2$. Both squared-norm terms are convex, but the second enters with a negative sign, creating the indefinite part of the residual. We therefore add cuts that limit artificial lifted variance in these anti-alignment directions as follows. Let $q$ be a direction in the coordinates of $z_l$, and suppose OBBT on the localized set gives $\ell_{lq}^{(b)}\le q^\top z_l\le u_{lq}^{(b)}$. At an exact lift, $q^\top Z_lq=(q^\top z_l)^2/y_l$, $q^\top w_l=(q^\top z_l)/y_l$, and $s_l=1/y_l$. The secant upper bound for a square over the interval $[\ell_{lq}^{(b)},u_{lq}^{(b)}]$ gives the valid cut
\begin{equation}
    q^\top Z_lq
    \le
    (\ell_{lq}^{(b)}+u_{lq}^{(b)})q^\top w_l
    -\ell_{lq}^{(b)}u_{lq}^{(b)}s_l.
    \label{eq:spectral_cut_manuscript}
\end{equation}
We add this cut for coordinate directions, hubwise anti-alignment directions such as $(e_i^h-e_i^X)/\sqrt2$, and contiguous hub segments. If substantial lifted slack remains, we generate additional dense directions from the leading eigenvectors of that slack, recompute localized bounds for the corresponding $q^\top z_l$, and add cuts of the form~\eqref{eq:spectral_cut_manuscript}. The endpoint SDP can be strengthened in many ways, so we focus additional effort where it is most likely to improve the staircase bound. To identify loose line-level lifts, we compare the residual represented by the SDP lift with the residual implied by its own aggregate variables. For line $l$, define
\[
    \Delta_l^{\mathrm{lift}}
    \coloneqq C_l\frac{z_l^\top M_lz_l}{y_l}-C_l\langle M_l,Z_l\rangle .
    \label{eq:eval_minus_sdp_manuscript}
\]
At an exact lift, this quantity is zero. Large values indicate that the SDP is using artificial lifted variance to reduce the residual on line $l$, so subsequent OBBT and dense anti-alignment cuts are focused on these lines. These line-level strengthening steps are embedded in an adaptive frontier procedure, summarized in Algorithm~\ref{alg:frontier_general}. The algorithm begins with a coarse grid, computes endpoint lower bounds, and identifies the intervals that determine the current staircase bound. It then refines only these active intervals, either by adding nearby grid points or by strengthening their endpoint relaxations. The RLN-HT implementation used in the experiments follows this same structure with the endpoint relaxations described above.

\begin{algorithm}[t]
    \caption{Adaptive frontier certificate}
    \label{alg:frontier_general}
    \begin{enumerate}
        \item Solve $\min_{x\in\Xset} g(x)$ to obtain $g_c$ and a feasible incumbent value $F_c$ and set $\gamma_B=F_c-g_c$.
        \item Choose an initial grid $0=\gamma_0<\gamma_1<\cdots<\gamma_B$ and compute valid endpoint bounds $\ell_b\le m(\gamma_b)$.
        \item Set $LB_{\rm front}=g_c+\min_b\{\gamma_{b-1}+\ell_b\}$ and identify the active interval(s).
        \item While the active interval is wide or its endpoint relaxation is loose, either add nearby grid points and compute their endpoint bounds, or strengthen the endpoint relaxations for active and near-active intervals. Update $LB_{\rm front}$ and the active interval(s).
        \item Return $LB_{\rm front}$.
    \end{enumerate}
    \end{algorithm}

%% file: graphics/m_gamma.tex
\begin{figure}[t]
    \centering
    \captionsetup[subfigure]{font=footnotesize}

    \begin{subfigure}{0.285\textwidth}
    \centering
    \begin{adjustbox}{max width=\linewidth}
\begin{tikzpicture}
    \begin{axis}[
        width=0.84\textwidth,
        height=0.84\textwidth,
        scale only axis,
        xmin=0, xmax=1,
        ymin=0, ymax=1,
        xlabel={$\gamma$},
        xlabel style={yshift=1.2em},
        ylabel={residual value},
        ylabel style={at={(axis description cs:-0.13,0.48)}, anchor=center},
        xtick={0,1},
        xticklabels={$0$,$r_c$},
        ytick={0,1},
        yticklabels={,$r_c$},
        axis lines=left,
        clip=false,
    ]
    \addplot[orange, dashed, very thick] coordinates {(0,1) (1,0)};

    \addplot[black, very thick] coordinates {
        (0.00,1.00)
        (0.08,0.86)
        (0.18,0.72)
        (0.30,0.46)
        (0.42,0.36)
        (0.55,0.22)
        (0.68,0.18)
        (0.82,0.07)
        (1.00,0.04)
    };

    \node[font=\normalsize, align=center] at (axis cs:0.55,0.95)
    {true residual frontier};
    \end{axis}
    \end{tikzpicture}
\end{adjustbox}
    \caption{Residual frontier}
    \end{subfigure}
    \hspace{0.009\textwidth}
    \begin{subfigure}{0.285\textwidth}
    \centering
    \begin{adjustbox}{max width=\linewidth}
\begin{tikzpicture}
    \begin{axis}[
        width=0.84\textwidth,
        height=0.84\textwidth,
        scale only axis,
        xmin=0, xmax=1,
        ymin=0, ymax=1,
        xlabel={\phantom{$\gamma$}},
        xlabel style={yshift=1.2em},
        xtick={0,0.15,0.30,0.45,0.60,0.75,1.0},
        xticklabels={$0$,$\gamma_1$,$\gamma_2$,$\gamma_3$,$\gamma_4$,$\gamma_5$,$r_c$},
        ytick={0,1},
        yticklabels={,$r_c$},
        axis lines=left,
        clip=false,
        x tick label style={font=\small, rotate=0, anchor=north},
    ]
    \addplot[orange, dashed, very thick] coordinates {(0,1) (1,0)};

    \addplot[black, very thick] coordinates {
        (0.00,1.00)
        (0.08,0.86)
        (0.18,0.72)
        (0.30,0.46)
        (0.42,0.36)
        (0.55,0.22)
        (0.68,0.18)
        (0.82,0.07)
        (1.00,0.04)
    };

    \addplot[
        only marks,
        mark=*,
        mark size=1.8pt,
        black
    ] coordinates {
        (0.15,0.76)
        (0.30,0.46)
        (0.45,0.33)
        (0.60,0.21)
        (0.75,0.13)
        (1.00,0.04)
    };

    \addplot[
        only marks,
        mark=*,
        mark size=2pt,
        blue
    ] coordinates {
        (0.15,0.71)
        (0.30,0.42)
        (0.45,0.29)
        (0.60,0.17)
        (0.75,0.09)
        (1.00,0.02)
    };

    \pgfplotsinvokeforeach{0.15,0.30,0.45,0.60,0.75,1.00}{
        \draw[gray!25] (axis cs:#1,0) -- (axis cs:#1,1);
    }
    \end{axis}
    \end{tikzpicture}
\end{adjustbox}
    \caption{Endpoint lower bounds}
    \end{subfigure}
    \hspace{0.018\textwidth}
    \begin{subfigure}{0.285\textwidth}
    \centering
    \begin{adjustbox}{max width=\linewidth}
\begin{tikzpicture}
    \begin{axis}[
        width=0.84\textwidth,
        height=0.84\textwidth,
        scale only axis,
        xmin=0, xmax=1,
        ymin=0, ymax=1,
        xlabel={\phantom{$\gamma$}},
        xlabel style={yshift=1.2em},
        xtick={0,0.15,0.30,0.45,0.60,0.75,1.0},
        xticklabels={$0$,$\gamma_1$,$\gamma_2$,$\gamma_3$,$\gamma_4$,$\gamma_5$,$r_c$},
        ytick={0,1},
        yticklabels={,$r_c$},
        axis lines=left,
        clip=false,
        x tick label style={font=\small, rotate=0, anchor=north},
    ]
    \path[fill=red!10, draw=none] (axis cs:0.15,0) rectangle (axis cs:0.30,1);

    \addplot[orange, dashed, very thick] coordinates {(0,1) (1,0)};

    \addplot[black, very thick] coordinates {
        (0.00,1.00)
        (0.08,0.86)
        (0.18,0.72)
        (0.30,0.46)
        (0.42,0.36)
        (0.55,0.22)
        (0.68,0.18)
        (0.82,0.07)
        (1.00,0.04)
    };

    \addplot[blue, very thick] coordinates {
        (0.00,0.71)
        (0.15,0.71)
        (0.15,0.42)
        (0.30,0.42)
        (0.30,0.29)
        (0.45,0.29)
        (0.45,0.17)
        (0.60,0.17)
        (0.60,0.09)
        (0.75,0.09)
        (0.75,0.02)
        (1.00,0.02)
    };

    \addplot[
        only marks,
        mark=*,
        mark size=2pt,
        blue
    ] coordinates {
        (0.15,0.71)
        (0.30,0.42)
        (0.45,0.29)
        (0.60,0.17)
        (0.75,0.09)
        (1.00,0.02)
    };

    \draw[red, thick, <->] (axis cs:0.15,0.42) -- (axis cs:0.15,0.85);
    \node[red, font=\normalsize, anchor=west] at (axis cs:0.20,0.64) {certified gap};

    \node[red, font=\normalsize, anchor=south west] at (axis cs:0.2,0.9)
    {weakest stair};
    \end{axis}
    \end{tikzpicture}
\end{adjustbox}
    \caption{Staircase certificate}
    \end{subfigure}

    \vspace{0.4em}

    {\footnotesize
    \setlength{\fboxsep}{1.5pt}
    \setlength{\fboxrule}{0.3pt}
    \fbox{%
    \renewcommand{\arraystretch}{0.85}%
    \begin{tabular}{@{}c@{\ }l@{\quad}c@{\ }l@{\quad}c@{\ }l@{\quad}c@{\ }l@{\quad}c@{\ }l@{}}
    \tikz[baseline=-0.6ex]{\draw[orange, dashed, very thick] (0,0) -- (0.55,0);} &
    $r_c-\gamma$ &
    \tikz[baseline=-0.6ex]{\draw[black, very thick] (0,0) -- (0.55,0);} &
    $m(\gamma)$ &
    \tikz[baseline=-0.6ex]{\draw[black, fill=black] (0.28,0) circle (1.6pt);} &
    $m(\gamma_b)$ points &
    \tikz[baseline=-0.6ex]{\draw[blue, fill=blue] (0.28,0) circle (1.8pt);} &
    $\ell_b$ points &
    \tikz[baseline=-0.6ex]{\draw[blue, very thick] (0,0) -- (0.25,0) -- (0.25,-0.10) -- (0.55,-0.10);} &
    staircase lower bound
    \end{tabular}%
    }}

    \caption{Residual-frontier certificate from endpoint lower bounds and monotonicity.}
    \label{fig:frontier_schematic}
    \end{figure}

%% file: sections/06-computational-results.tex
\section{Computational Results}
\label{sec:num_res}
In this section, we evaluate the formulations as routing oracles on synthetic and real-world instances, comparing primal solution quality, certified lower bounds, and computational speed. Section~\ref{ssec:instances} describes the instances; because comparable parcel-line datasets are not publicly available, we use public-transport line-planning datasets. Section~\ref{ssec:model_strength} compares formulation strength and the value of endogenous handling times, Section~\ref{ssec:frontier_bound_comp} evaluates the frontier bound, and Section~\ref{ssec:congestion} studies sensitivity to handling times. All experiments were implemented in Python using Gurobi Optimizer v12 on a Linux workstation with an AMD Ryzen 9 5950X processor and 128~GB of RAM.

\subsection{Instances}
\label{ssec:instances}
The synthetic and real-world PTNs used in the experiments are depicted in Appendix~\ref{app:instances}.

\paragraph{Synthetic PTNs.}
We generate synthetic PTNs in three configurations, each embedded in a $60 \times 60$ minute square. The $G_{n,m}$ instances are regular $n\times m$ grids with horizontal and vertical adjacencies; the $K_n$ instances are complete graphs with $n$ nodes placed on a grid layout; and the $P_n$ instances are random planar graphs with $n$ nodes placed uniformly in the square. The five sizes of each configuration are shown in Appendix~\ref{app:instances}.

\paragraph{Real-world PTNs.}
We use two real-world benchmarks from the line-planning literature. The \emph{Sioux Falls (SF)} instance, introduced by \citet{leblanc1975efficient}, is based on the road network of Sioux Falls, South Dakota, and has 24 nodes and 38 edges. The \emph{G\"oVB} instance is based on the Göttingen bus network and has 257 nodes and 548 edges. Both datasets were obtained from LinTim \citep{SchieweSchoebelHerralaetal2025} and are depicted in Appendix~\ref{app:instances}.

For each instance, we generate line networks by sampling $B$ lines from a line pool, where $B$ is the vehicle budget. The line pool consists of all simple cycles for the synthetic\footnote{For $K_{16}$, enumerating all simple cycles was too expensive. Instead, we kept 60\% of all arcs, and then exhaustively generated the full line pool.} and Sioux Falls instances, and the expert LinTim line pool for G\"oVB.\footnote{Section~\ref{sec:methods} models each line as a simple cycle in the CGN. Real-world line networks, including G\"oVB, are typically paths that reverse at a terminus. We model this by duplicating each intermediate hub by direction, allowing different dwell times on outbound and inbound passes. This only enlarges the CGN; the formulations and results are otherwise unchanged.} Synthetic demand is distributed uniformly across all $|\mathcal{N}|(|\mathcal{N}|-1)$ OD pairs, while real-world instances use the provided OD matrices. Unless stated otherwise, $\alpha=\beta=0.5$ minutes per parcel, and the period is one day. Table~\ref{table:PTN_table} summarizes the instance parameters.

\begin{table}[h]
\centering
\caption{PTN instance parameters. \textbf{G/K/P} denote grid, complete, and planar synthetic configurations.}
\label{table:PTN_table}
\footnotesize
\setlength{\tabcolsep}{5pt}
\renewcommand{\arraystretch}{0.86}
\begin{tabular}{@{}llccccc@{}}
\toprule
\textbf{Type} & \textbf{Instance} & \textbf{\# nodes} & \textbf{\# arcs} & \textbf{B} & \textbf{Demand} & \textbf{Q}\\
\midrule
Synthetic & $G_{2,2}/K_{4}/P_{4}$ & 4  & 4/6/4 & 4 & 1500 & 200\\
 & $G_{2,3}/K_{6}/P_{6}$ & 6  & 7/15/7 & 6  & 2000 & 200\\
 & $G_{3,3}/K_{9}/P_{9}$ &9  & 12/36/11 & 8  & 3000  & 200\\
 & $G_{3,4}/K_{12}/P_{12}$ & 12 & 17/66/18 & 10  & 4000 & 200 \\
 & $G_{4,4}/K_{16}/P_{16}$ & 16 & 24/120/23 & 12 & 4000 & 200\\
\addlinespace[1pt]
Real-world & SF    & 24  & 38  & 12 & 4114 & 100 \\
 & G\"oVB & 257 & 548 & 25 & 8112 & 100\\
\bottomrule
\end{tabular}
\end{table}

\subsection{Formulation strength}
\label{ssec:model_strength}
Across all instances, we evaluate the two endogenous dwell-time formulations $(\mathbf{RD}_{x,\Delta})$ and $(\mathbf{RD}_{x})$, as well as the convex relaxations $(\mathbf{RD}^{\text{transfer}}_{x})$ and $(\mathbf{RD}^{\text{under}}_{x})$. We solve the non-convex formulation using Gurobi's off-the-shelf spatial branch-and-bound solver and the SOCP relaxations using its interior-point/barrier SOCP solver.

We aim to understand whether endogenously accounting for handling operations is important. To that end, we compare the solutions to the abovementioned formulations with a baseline that ignores handling times. Specifically, we solve $(\mathbf{RD}_{x,\Delta})$ with $\alpha=\beta=0$ and without the dwell-time variables $\boldsymbol{\Delta}$, resulting in a linear program denoted $(\mathbf{R}_{x})$. Solving $(\mathbf{R}_{x})$ yields an \emph{optimistic} routing plan that does not account for handling-induced delays. The objective function value of this linear program provides a valid lower bound for the RLN-HT problem. In addition, if the routing solution satisfies the constraints in Proposition~\ref{prop:2}, evaluating the objective of $(\mathbf{RD}_{x})$ at that routing solution provides a valid upper bound for the RLN-HT problem. This upper bound represents the \emph{realized} average travel time with the obtained routing solution once dwell times are accounted for.

\subsubsection{Synthetic instances}
To demonstrate a single instance, let us consider a line network for $K_{4}$ shown in Figure~\ref{fig:k_4_lc}. Figure~\ref{fig:k_4_gaps} presents the evolution over time of the corresponding upper (solid) and lower (dashed) bounds produced by the spatial branch-and-bound solver for the different formulations. The accompanying table reports, for each formulation, the final upper and lower bounds (UB and LB). In addition, for the two formulations that are capable of finding and proving the optimal solution, we report the time to find the optimal solution (TtOS) and the time to prove optimality (TtPO). The distinction between TtOS and TtPO highlights the gap between quickly identifying a high-quality solution and certifying its optimality.

\begin{figure}[ht]
\centering
\captionsetup[subfigure]{font=footnotesize,skip=1pt}
\begin{subfigure}[b]{0.35\linewidth}
    \centering
    \raisebox{1em}{\includegraphics[width=0.8\linewidth]{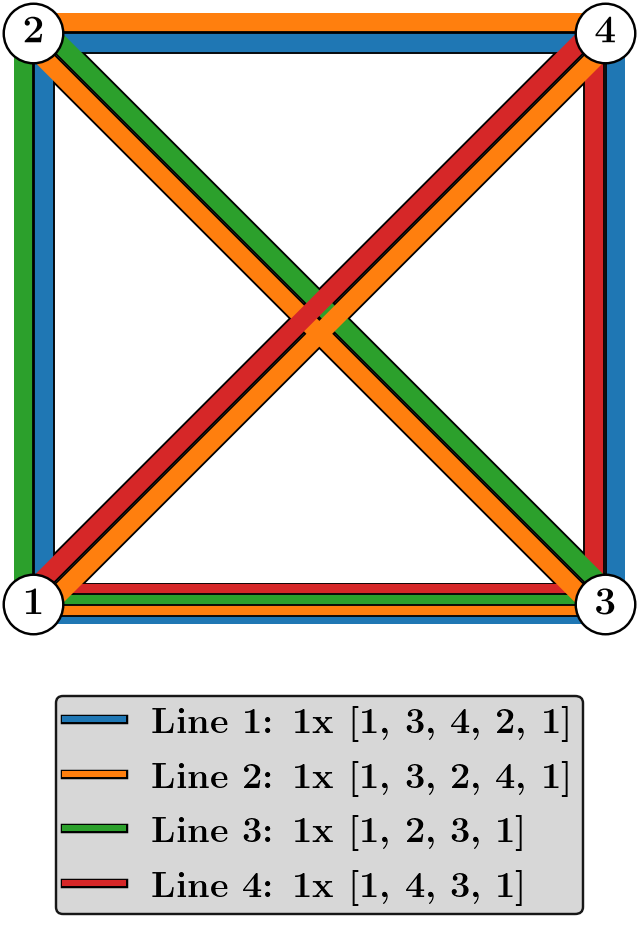}}
    \caption{line network}
    \label{fig:k_4_lc}
\end{subfigure}
\begin{subfigure}[b]{0.64\linewidth}
    \begin{adjustbox}{max width=\linewidth}
\begin{tikzpicture}
\begin{axis}[
    name=leftaxis,
    width=\linewidth,
    height=0.64\linewidth,
    xmin=-15, xmax=2600,
    ymin=170, ymax=282,
    ylabel={Average Travel Time (aTT)},
    grid=both,
    axis lines=left,
    ytick pos=left,%
    grid style={line width=.1pt, draw=gray!30},
    major grid style={line width=.2pt, draw=gray!45},
    tick align=outside,
    xtick distance=600,
    xtick pos=bottom,                 %
  xmajorgrids=true,
  tick style={major tick length=3pt, semithick, draw=black, inner sep=0pt},
  xlabel={Time (s)},
	  xlabel style={
	    at={(axis description cs:0.5,0.08)}, %
	    anchor=south,                     %
	    yshift=-5pt,                       %
	    font=\footnotesize
	  },
    legend style={draw=none}
    axis line style={line width=0.9pt},  %
    tick style={line width=0.9pt},
    every axis plot/.append style={line width=1.4pt},  %
]

\addplot[black] coordinates {(0,278.633) (2550,278.633)};
\addplot[black, lbBase] coordinates {(0,175) (2550,175)};
\addplot[socp] coordinates {(0,249.489) (2550,249.489)};
\addplot[socp, lbSOCP] coordinates {(0,220.906) (2550,220.906)};
\addplot[under] coordinates {(0,249.533) (2550,249.533)};
\addplot[under, lbUnder] coordinates {(0,238.696) (2550,238.696)};
\addplot[color=ArcOrig]
  table [x=time, y=ub, col sep=comma] {graphics/Figure_for_Complete_2x2/Arc_Orig_bounds.csv};
\addplot[color=ArcOrig, lbOrig]
  table [x=time, y=lb, col sep=comma] {graphics/Figure_for_Complete_2x2/Arc_Orig_bounds.csv};
\addplot[color=ArcNoDelta]
  table [x=time, y=ub, col sep=comma] {graphics/Figure_for_Complete_2x2/Arc_No_Delta_bounds.csv};
\addplot[color=ArcNoDelta, lbNoDelta]
  table [x=time, y=lb, col sep=comma] {graphics/Figure_for_Complete_2x2/Arc_No_Delta_bounds.csv};
\draw[gray, dashed] (axis cs:0,230) rectangle (axis cs:13,252);
\end{axis}

\begin{axis}[
  at={(leftaxis.north east)},   %
  anchor=north east,
  xshift=-0.2cm,                 %
  yshift=-3.0cm,
  width=3.8cm,                   %
  height=2.75cm,
  xmin=0, xmax=13,
  axis lines=left,
  ytick pos=left,%
  ymin=230, ymax=252,
  grid=both,
  grid style={line width=.1pt, draw=gray!30},
  tick label style={font=\scriptsize},
  label style={font=\scriptsize},
  line width=0.7pt,
  unbounded coords=discard,
  restrict x to domain*=0:13,
  restrict y to domain*=0:500,
]
\addplot[very thick, color=ArcNoDelta]
  table [x=time, y=ub, col sep=comma] {graphics/Figure_for_Complete_2x2/Arc_No_Delta_bounds.csv};
\addplot[thick, color=ArcNoDelta, dashed]
  table [x=time, y=lb, col sep=comma] {graphics/Figure_for_Complete_2x2/Arc_No_Delta_bounds.csv};
\end{axis}
\end{tikzpicture}
\end{adjustbox}
\par\vspace{0.35em}
{\centering
\scriptsize
\setlength{\tabcolsep}{2.8pt}
\renewcommand{\arraystretch}{0.88}
\begin{tabular}{lccccc}
 & $(\mathbf{R}_{x})$ & $(\mathbf{RD}_{x,\Delta})$  & $(\mathbf{RD}_{x})$ & $(\mathbf{RD}_{x}^{\text{transfer}})$ & $(\mathbf{RD}_{x}^{\text{under}})$ \\
\midrule
UB & \raisebox{1.5pt}{\tikz{\draw[very thick, black] (0,0)--(0.4,0);}} 278.6
   & \raisebox{1.5pt}{\tikz{\draw[very thick, ArcOrig] (0,0)--(0.4,0);}} 249.3
   & \raisebox{1.5pt}{\tikz{\draw[very thick, ArcNoDelta] (0,0)--(0.4,0);}} 249.3
   & \raisebox{1.5pt}{\tikz{\draw[very thick, socp] (0,0)--(0.4,0);}} 249.5
   & \raisebox{1.5pt}{\tikz{\draw[very thick, under] (0,0)--(0.4,0);}} 249.5 \\
LB & \raisebox{1.5pt}{\tikz{\draw[very thick, black,lbBase] (0,0)--(0.4,0);}}175
   & \raisebox{1.5pt}{\tikz{\draw[very thick, ArcOrig,lbOrig] (0,0)--(0.4,0);}} 249.3
   & \raisebox{1.5pt}{\tikz{\draw[very thick, ArcNoDelta,lbNoDelta] (0,0)--(0.4,0);}} 249.3
   & \raisebox{1.5pt}{\tikz{\draw[very thick, socp,lbSOCP] (0,0)--(0.4,0);}} 220.9
   & \raisebox{1.5pt}{\tikz{\draw[very thick, under,lbUnder] (0,0)--(0.4,0);}} 238.7 \\
TtOS & -- & 2.37 & 1.01 & -- & --\\
TtPO & -- & 2560 & 12.2 & -- & --\\
\bottomrule
\end{tabular}
\par}
\caption{MIP Gaps}
\label{fig:k_4_gaps}
\end{subfigure}
\caption{MIP progress of different formulations for a candidate line network of $K_{4}$.}
\label{fig:K_4_example_run}
\end{figure}

In Figure~\ref{fig:k_4_gaps}, the baseline yields an \emph{optimistic} expected average Travel Time (aTT) of 175 minutes; however, when the same routing is evaluated with handling operations, the realized aTT increases to 278.6 minutes. In contrast, $(\mathbf{RD}_{x,\Delta})$ and $(\mathbf{RD}_{x})$ that endogenize dwell times find and certify the optimal solution with aTT of 249.3 minutes, corresponding to a 10.4\% reduction relative to the realized baseline. The convex relaxations $(\mathbf{RD}_{x}^{\text{transfer}})$ and $(\mathbf{RD}_{x}^{\text{under}})$ return slightly suboptimal solutions with aTT of 249.5 minutes, with certified optimality gaps of 11.4\% and 4.3\%, respectively. For this instance, these gaps are driven by the lower bounds, since the upper bounds are already near-optimal. Comparing the two formulations solved by spatial branch-and-bound, the main difference lies in the time required to certify optimality. Formulation $(\mathbf{RD}_{x,\Delta})$ reaches the optimal solution in 2.37 seconds but requires roughly 40 additional minutes to prove optimality. By contrast, $(\mathbf{RD}_{x})$ reaches the optimum in 1 second and certifies optimality in 12.2 seconds (see the zoomed-in axis).

\begin{figure}[ht]
\centering
\captionsetup[subfigure]{font=footnotesize,skip=1pt}
\begin{subfigure}[b]{0.39\linewidth}
    \centering
    \raisebox{1em}{\includegraphics[width=0.88\linewidth]{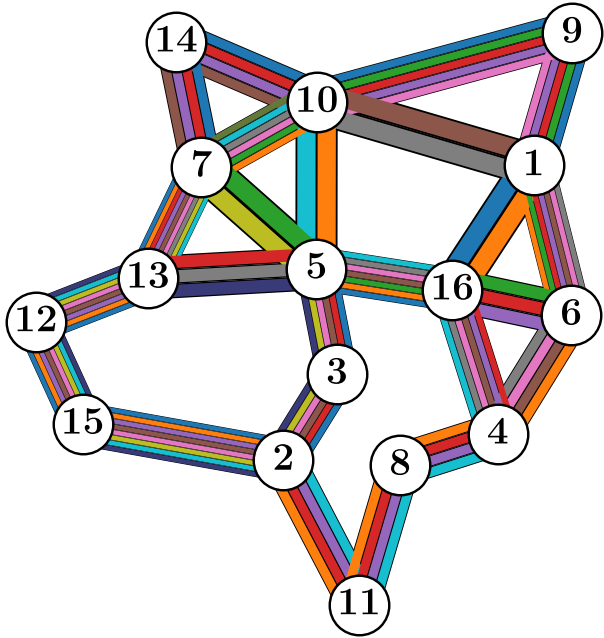}}
    \caption{line network}
    \label{fig:p_16_lc}
\end{subfigure}
\hfill
\begin{subfigure}[b]{0.6\linewidth}
    \begin{adjustbox}{max width=\linewidth}
\begin{tikzpicture}
\begin{axis}[
    width=\linewidth,
    height=0.55\linewidth,
    xmin=-50, xmax=3800,
    ymin=100, ymax=210,
    ylabel={Average Travel Time (aTT)},
    grid=both,
    grid style={line width=.1pt, draw=gray!30},
    major grid style={line width=.2pt, draw=gray!45},
    tick align=outside,
    xtick distance=600,
    xtick pos=bottom,
    axis lines=left,
    ytick pos=left,%
  xmajorgrids=true,
  tick style={major tick length=3pt, semithick, draw=black, inner sep=0pt},
  xtick align=inside,               %
  x tick label style={anchor=south},%
  xlabel={Time (s)},
	  xlabel style={
	    at={(axis description cs:0.5,0.15)}, %
	    anchor=north,                     %
	    yshift=9pt,                       %
	    font=\footnotesize
	  },
    legend style={draw=none}
    axis line style={line width=0.9pt},  %
    tick style={line width=0.9pt},
    every axis plot/.append style={line width=1.7pt},  %
]

\addplot[black] coordinates {(0,207) (3600,207)};
\addplot[black,lbBase] coordinates {(0,130) (3600,130)};
\addplot[socp] coordinates {(0,179.86) (3600,179.86)};
\addplot[socp,lbSOCP] coordinates {(0,159.59) (3600,159.59)};
\addplot[under] coordinates {(0,179.22) (3600,179.22)};
\addplot[under,lbUnder] coordinates {(0,164.85) (3600,164.85)};

\addplot[color=ArcOrig]
  table [x=time, y=ub, col sep=comma] {graphics/Figure_for_Planar_16/Arc_Orig_bounds.csv};
\addplot[color=ArcOrig, lbOrig]
  table [x=time, y=lb, col sep=comma] {graphics/Figure_for_Planar_16/Arc_Orig_bounds.csv};

\addplot[color=ArcNoDelta]
  table [x=time, y=ub, col sep=comma] {graphics/Figure_for_Planar_16/Arc_No_Delta_bounds.csv};
\addplot[color=ArcNoDelta, lbNoDelta]
  table [x=time, y=lb, col sep=comma] {graphics/Figure_for_Planar_16/Arc_No_Delta_bounds.csv};

\end{axis}

\end{tikzpicture}
\end{adjustbox}
\par\vspace{-0.55em}
{\centering
\scriptsize
\setlength{\tabcolsep}{1.35pt} %
\renewcommand{\arraystretch}{0.66} %
\begin{tabular}{lccccc}
\toprule
 & $(\mathbf{R}_{x})$ & $(\mathbf{RD}_{x,\Delta})$ & $(\mathbf{RD}_{x})$ & $(\mathbf{RD}_{x}^{\text{transfer}})$ & $(\mathbf{RD}_{x}^{\text{under}})$ \\
\midrule
UB & \raisebox{1.5pt}{\tikz{\draw[very thick, black] (0,0)--(0.4,0);}} 207
   & \raisebox{1.5pt}{\tikz{\draw[very thick, ArcOrig] (0,0)--(0.4,0);}} 207
   & \raisebox{1.5pt}{\tikz{\draw[very thick, ArcNoDelta] (0,0)--(0.4,0);}} 178
   & \raisebox{1.5pt}{\tikz{\draw[very thick, socp] (0,0)--(0.4,0);}} 179
   & \raisebox{1.5pt}{\tikz{\draw[very thick, under] (0,0)--(0.4,0);}} 179\\
LB & \raisebox{1.5pt}{\tikz{\draw[very thick, black, lbBase] (0,0)--(0.4,0);}} 130
   & \raisebox{1.5pt}{\tikz{\draw[very thick, ArcOrig,lbOrig] (0,0)--(0.4,0);}} 136
   & \raisebox{1.5pt}{\tikz{\draw[very thick, ArcNoDelta,lbNoDelta] (0,0)--(0.4,0);}} 144
   & \raisebox{1.5pt}{\tikz{\draw[very thick, socp,lbSOCP] (0,0)--(0.4,0);}} 160
   & \raisebox{1.5pt}{\tikz{\draw[very thick, under,lbUnder] (0,0)--(0.4,0);}} 165\\
Gap (\%) & 37.1 & 34.4 & 19.4 & 11.3 & 8.0\\
TtBS     & -- & 3572 & 31.3 &-- &--\\
\bottomrule
\end{tabular}
\par}
  \caption{MIP Gaps}
  \label{fig:p_16_gaps}
\end{subfigure}
\caption{MIP progress of different formulations for a candidate line network of $P_{16}$.}
\label{fig:planar_16_example}
\end{figure}
Figure~\ref{fig:planar_16_example} reports the same experiment for a line network on $P_{16}$. Since neither $(\mathbf{RD}_{x})$ nor $(\mathbf{RD}_{x,\Delta})$ certifies optimality within the one hour time limit, the accompanying table reports the best upper bound (UB), lower bound (LB), the resulting optimality gap (Gap), and the time in seconds to the best-known solution (TtBS).

For this instance and line network, the baseline produces a poor routing policy with an expected aTT of 130 minutes, but once handling operations are applied, the induced (realized) aTT increases to 207 minutes. In contrast, $(\mathbf{RD}_{x})$ identifies a stronger routing with aTT of 178 minutes within 31.3 seconds, and spends the remainder of the one-hour run tightening the dual bound, reaching a final gap of 19.4\%. Although the convex relaxations produce a slightly sub-par routing solution (UBs of 179 vs.\ 178), they produce much stronger lower bounds, reducing the final gaps to 11.3\% and 8.0\%. These lower bounds are obtained nearly instantly as the relaxations are solved as SOCPs in Gurobi.

To assess the consistency of these results, Table~\ref{tab:syn} reports statistics over 20 randomly generated line networks for each of the 15 synthetic PTNs under a one-hour time limit. For the baseline, we report the number of line networks that yield a feasible routing (Feas) and, when feasible, the percentage increase from the expected aTT to the induced aTT once handling times are applied (aG). For $(\mathbf{RD}_{x,\Delta})$ and $(\mathbf{RD}_{x})$, we report averages (over the 20 line networks) of: (i) the percentage improvement in aTT relative to the baseline (aI), (ii) the sub-optimality gap at the one-hour time limit (aG), and (iii) the time to best solution (TtBS, in seconds). For the convex relaxations, we report only aI and aG, and not the TtBS, as they are solved using Gurobi's barrier method in less than a second.

\begin{table}[htbp]
\centering
\caption{Performance of formulations over 20 randomly generated line networks for synthetic instances. -- indicates no feasible solution was found within the time limit.\label{tab:syn}}
\begingroup
\setlength{\tabcolsep}{3.0pt}%
\renewcommand{\arraystretch}{0.9}%
\small
\begin{tabular}{ccccccccccccc}
\toprule
PTN
& \multicolumn{2}{c}{$(\mathbf{R}_{x})$}
& \multicolumn{3}{c}{$(\mathbf{RD}_{x,\Delta})$}
& \multicolumn{3}{c}{$(\mathbf{RD}_{x})$}
& \multicolumn{2}{c}{$(\mathbf{RD}_{x}^{\text{transfer}})$}
& \multicolumn{2}{c}{$(\mathbf{RD}_{x}^{\text{under}})$} \\
\cmidrule(lr){2-3}\cmidrule(lr){4-6}\cmidrule(lr){7-9}\cmidrule(lr){10-11}\cmidrule(lr){12-13}
  & Feas & aG(\%) & aI(\%) & aG(\%) & TtBS & aI(\%) & aG(\%) & TtBS & aI(\%) & aG(\%) & aI(\%) & aG(\%) \\
\midrule
$K_{4}$     & 20 & 39.8 & 8.1   & 0.4  & 1.1   & 8.1  & 0.0 & 0.5 & 8.1 & 16.0 & 8.1 & 7.3 \\
$G_{2,2}$   & 20 & 45.8 & 8.7   & 0.0  & 0.6   & 8.7  & 0.0 & 0.1 & 8.5 & 20.8 & 8.6 & 9.7 \\
$P_{4}$     & 20 & 42.3 & 0.9   & 0.0  & 0.1   & 0.9  & 0.0 & 0.0 & 0.8 & 21.0 & 0.8 & 10.2 \\
$K_{6}$     & 15 & 35.2 & 11.1  & 10.8 & 10.3  & 11.1 & 4.3 & 1.0 & 9.9 & 11.8 & 10.3 & 5.3 \\
$G_{2,3}$   & 20 & 41.4 & 6.4   & 7.7  & 3.6   & 6.4  & 0.7 & 1.0 & 6.0 & 18.4 & 6.2 & 11.3 \\
$P_{6}$     & 20 & 44.9 & 6.9   & 1.8  & 2.1   & 6.9  & 0.0 & 0.8 & 6.1 & 21.1 & 6.2 & 11.7 \\
$K_{9}$     & 19 & 39.3 & 14.6  & 17.2 & 219.9 & 14.6 & 9.9 & 1.0 & 14.2 & 9.4 & 14.4 & 5.2 \\
$G_{3,3}$   & 15 & 43.5 & 13.2  & 16.5 & 55.8  & 13.2 & 7.5 & 1.1 & 12.7 & 18.0 & 13.0 & 12.2 \\
$P_{9}$     & 18 & 81.3 & 12.1  & 4.2  & 3.5   & 12.1 & 0.0 & 0.8 & 11.4 & 39.0 & 11.6 & 19.5 \\
$K_{12}$    & 11 & 42.6 & - & - & - & 17.1 & 18.3 & 1.2   & 16.2 & 8.9 & 16.6 & 4.7 \\
$G_{3,4}$   & 17 & 47.6 & - & - & - & 18.0 & 17.9 & 4.7   & 17.4 & 17.7 & 17.6 & 12.4 \\
$P_{12}$    & 11 & 53.5 & - & - & - & 27.0 & 18.5 & 9.4   & 26.3 & 17.9 & 26.8 & 12.8 \\
$K_{16}$    & 12 & 40.2 & - & - & - & 18.1 & 17.7 & 3.3   & 17.6 & 7.5 & 17.9 & 4.7 \\
$G_{4,4}$   & 17 & 48.6 & - & - & - & 21.3 & 18.2 & 16.1  & 20.9 & 13.9 & 21.1 & 10.5 \\
$P_{16}$    & 16 & 44.1 & - & - & - & 20.7 & 18.4 & 175.0 & 20.3 & 12.1 & 20.5 & 8.6 \\
\midrule
\textbf{Avg} & - & 46.0 & - & - & - & 13.6 & 8.8 & - & 13.1 & 16.9 & 13.3 & 9.7 \\
\bottomrule
\end{tabular}
\endgroup
\end{table}
Table~\ref{tab:syn} reveals several consistent trends. First, for a non-negligible number of line networks, the baseline routing is infeasible once handling times are accounted for, while the endogenous formulations still find feasible routing solutions. Figure~\ref{fig:baseline_infeasibility_examples} in Appendix~\ref{app:frontier} illustrates line concepts that induce feasible and infeasible baselines. Second, the pure-flow formulation $(\mathbf{RD}_{x})$ outperforms the bilinear formulation $(\mathbf{RD}_{x,\Delta})$ in terms of sub-optimality gaps, time to best solution, and scalability where for the 12- and 16-node instances, $(\mathbf{RD}_{x,\Delta})$ even fails to find a feasible solution within the one-hour time limit. We note that while the table reports the average improvement, the pure flow formulation consistently outperformed the bilinear formulation across all 300 line networks.

Third, the convex relaxations provide optimality gaps averaging at 16.9\% and 9.7\% across instances and appear largely independent of instance size. Already for the 12- and 16-node instances, the dual bounds from the convex relaxations are substantially tighter than the dual bound obtained by Gurobi’s spatial branch-and-bound after one hour. Fourth, Gurobi consistently finds slightly better primal solutions than the convex relaxations and typically does so quickly; however, it spends the remaining time limit just marginally improving the dual bound. This behavior is typical of spatial branch-and-bound for nonconvex problems, where heuristics often identify near-optimal incumbents early, and the remaining time is spent on certification by improving the dual bound. This motivated the frontier analysis of Section~\ref{sec:convex_frontier_bound}. Finally, across instances, endogenously accounting for handling times improves routing quality by an average of 13.6\%, with larger instances exhibiting larger improvements.

\subsubsection{Real world instances}

Figure~\ref{fig:sioux_falls_example} shows a candidate line network for the Sioux Falls PTN (Figure~\ref{fig:SF_LC_1}) and the bound progressions of all formulations (Figure~\ref{fig:sf_gaps}). The baseline aTT is 375 minutes, obtained by solving $(\mathbf{R}_{x})$. Within a one-hour time limit, $(\mathbf{RD}_{x,\Delta})$ does not find a feasible solution. In contrast, $(\mathbf{RD}_{x})$ finds a feasible solution in under 10 seconds with aTT of 303 minutes, but the remainder of the run is spent tightening the dual bound, ending with a 19.8\% gap after one hour. Extending the run to 24 hours does not improve the primal solution, and the dual bound improves only marginally (to an 18.2\% gap). The convex relaxations $(\mathbf{RD}_{x}^{\text{transfer}})$ and $(\mathbf{RD}_{x}^{\text{under}})$ produce slightly worse primal solutions, with aTT 305 and 304 minutes, but substantially stronger dual bounds, with final gaps of 9.0\% and 6.8\%, respectively.

\begin{figure}[ht]
\centering
\captionsetup[subfigure]{font=footnotesize,skip=1pt}
\begin{subfigure}[b]{0.35\linewidth}
    \centering
    \raisebox{1em}{\includegraphics[width=0.8\linewidth]{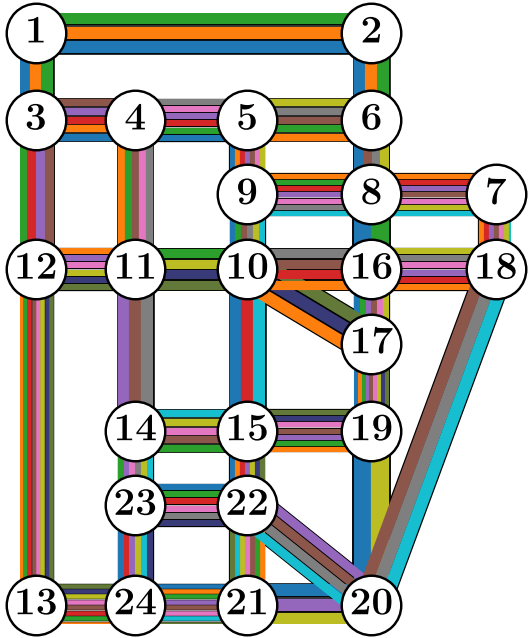}}
    \caption{line network}
    \label{fig:SF_LC_1}
\end{subfigure}
\hfill
\begin{subfigure}[b]{0.64\linewidth}
    \begin{adjustbox}{max width=\linewidth}
\begin{tikzpicture}
\begin{axis}[
    width=\linewidth,
    height=0.55\linewidth,
    xmin=-50, xmax=3620,
    ymin=180, ymax=380,
    ylabel={Average Travel Time (aTT)},
    grid=both,
    axis lines=left,
    ytick pos=left,%
    grid style={line width=.1pt, draw=gray!30},
    major grid style={line width=.2pt, draw=gray!45},
    tick align=outside,
    xtick distance=600,
    xtick pos=bottom,                 %
  xmajorgrids=true,
  tick style={major tick length=3pt, semithick, draw=black, inner sep=0pt},
  xtick align=inside,               %
  x tick label style={anchor=south},%
  xlabel={Time (s)},
	  xlabel style={
	    at={(axis description cs:0.5,0.13)}, %
	    anchor=north,                     %
	    yshift=8pt,                       %
	    font=\footnotesize
	  },
    legend style={draw=none}
    axis line style={line width=0.9pt},  %
    tick style={line width=0.9pt},
    every axis plot/.append style={line width=1.7pt},  %
]

\addplot[black] coordinates {(0,375) (3600,375)};
\addplot[black,lbBase] coordinates {(0,221) (3600,221)};
\addplot[socp] coordinates {(0,305.03) (3600,305.03)};
\addplot[socp, lbSOCP] coordinates {(0,277.56) (3600,277.56)};
\addplot[under] coordinates {(0,304.782) (3600,304.782)};
\addplot[under, lbUnder] coordinates {(0,284.175) (3600,284.175)};

\addplot[color=ArcNoDelta]
  table [x=time, y=ub, col sep=comma] {graphics/Figure_for_sioux_falls/Arc_No_Delta_bounds.csv};
\addplot[color=ArcNoDelta, lbNoDelta]
  table [x=time, y=lb, col sep=comma] {graphics/Figure_for_sioux_falls/Arc_No_Delta_bounds.csv};
\end{axis}

\end{tikzpicture}
\end{adjustbox}
\par\vspace{-0.55em}
{\centering
\scriptsize
\setlength{\tabcolsep}{1.35pt} %
\renewcommand{\arraystretch}{0.66} %
\begin{tabular}{lccccc}
  \toprule
   & $(\mathbf{R}_{x})$ & $(\mathbf{RD}_{x,\Delta})$ & $(\mathbf{RD}_{x})$ & $(\mathbf{RD}_{x}^{\text{transfer}})$ & $(\mathbf{RD}_{x}^{\text{under}})$  \\
  \midrule
  UB & \raisebox{2pt}{\tikz{\draw[very thick, black] (0,0)--(0.4,0);}} 375
     & -
     & \raisebox{2pt}{\tikz{\draw[very thick, ArcNoDelta] (0,0)--(0.4,0);}} 303
     & \raisebox{2pt}{\tikz{\draw[very thick, socp] (0,0)--(0.4,0);}} 305
     & \raisebox{2pt}{\tikz{\draw[very thick, under] (0,0)--(0.4,0);}} 304 \\
  LB & \raisebox{2pt}{\tikz{\draw[very thick, black, lbBase] (0,0)--(0.4,0);}}221
     & -
     & \raisebox{2pt}{\tikz{\draw[very thick, ArcNoDelta,lbNoDelta] (0,0)--(0.4,0);}} 243
     & \raisebox{2pt}{\tikz{\draw[very thick, socp,lbSOCP] (0,0)--(0.4,0);}} 277.6
     & \raisebox{2pt}{\tikz{\draw[very thick, under,lbUnder] (0,0)--(0.4,0);}} 284.2 \\
  Gap (\%) & 41.0 & -  & 19.8 & 9.0 & 6.8 \\
  TtBS & - & -  & 9.43 & 0.05 & 0.05 \\
	  \bottomrule
	    \end{tabular}
\par}
  \caption{MIP Gaps}
  \label{fig:sf_gaps}
\end{subfigure}
\caption{MIP progress of different formulations for a candidate line network of the Sioux Falls PTN.}
\label{fig:sioux_falls_example}
\end{figure}

Figure~\ref{gob} shows a candidate line network with 25 unique lines for the G\"oVB PTN. Although the lines are drawn as simple paths, each represents a bi-directional line that turns back after reaching its terminus. For this instance, the baseline yields an optimistic expected aTT of 429.5 minutes while the induced (realized) aTT is 1105.9 minutes, representing a 61.2\% gap. Neither $(\mathbf{RD}_{x,\Delta})$ nor $(\mathbf{RD}_{x})$ finds a feasible solution within a 24-hour time limit. In contrast, the convex relaxation produces a valid lower bound of 712.3 and an upper bound of 812.9, corresponding to a 12.4\% optimality gap. Relative to the induced baseline aTT, the relaxation upper bound improves routing quality by 26.5\%. Finally, formulation $(\mathbf{RD}_{x}^{\text{under}})$ strengthens the lower bound to 725.6, reducing the optimality gap to 10.7\%.

Table~\ref{tab:real_world} reports results over 10 line networks per real-world instance. For Sioux Falls, handling-aware routing improves aTT by 18.1\% on average, and $(\mathbf{RD}_{x}^{\text{under}})$ certifies a 6.9\% average gap.

For the larger G\"oVB instance (257 nodes, 25 lines), $(\mathbf{RD}_{x})$ does not find a feasible primal solution within the one-hour time limit for any of our 10 instances. Moreover, the baseline $(\mathbf{R}_{x})$ often yields an infeasible routing solution after accounting for handling times. Specifically, out of the 10 random line networks that we randomly generated, only the one depicted in Figure~\ref{gob} is such that the baseline admitted a feasible routing solution. For this particular instance, the routing solution obtained from the convex relaxations represents an improvement of 26.5\% over the solution induced by $(\mathbf{R}_{x})$.

\begin{figure}[htbp]
  \centering
  \begin{minipage}[t]{0.36\linewidth}
    \centering
    \vspace{0pt}
    \includegraphics[width=\linewidth]{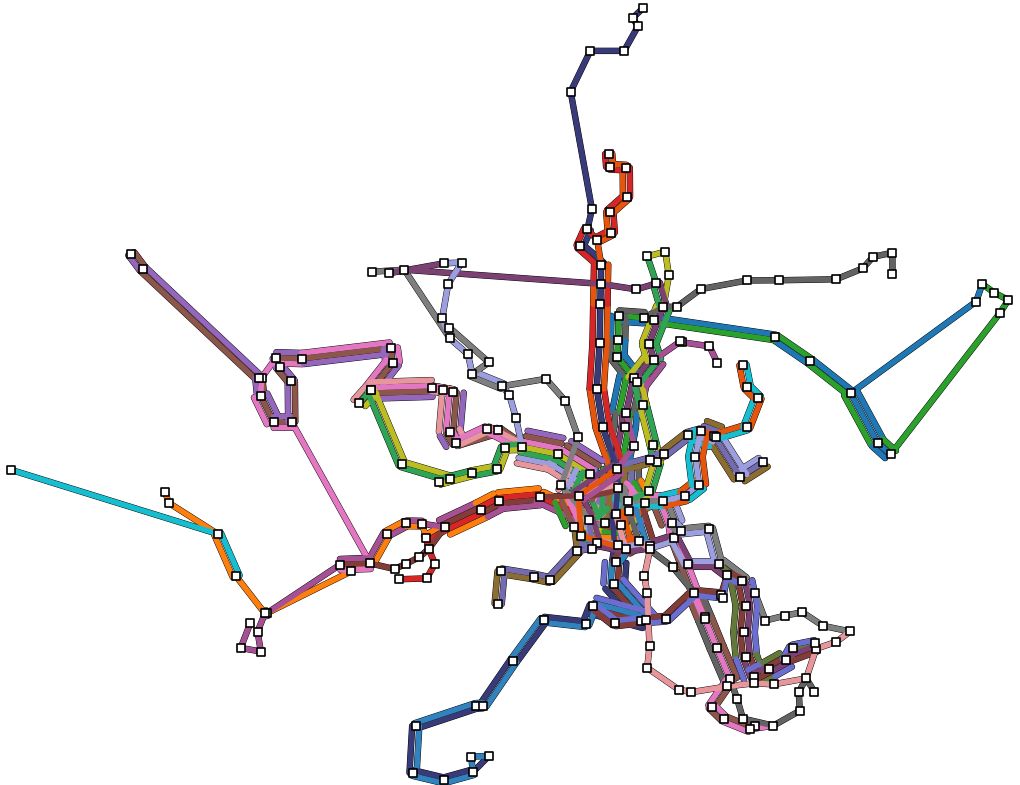}
    \vspace{-0.55em}
    \captionof{figure}{G\"oVB line network.}
    \label{gob}
  \end{minipage}
  \hspace{0.012\linewidth}
  \begin{minipage}[t]{0.615\linewidth}
    \centering
    \vspace{0pt}
    \captionof{table}{Real-world formulation performance.}
    \label{tab:real_world}
    \vspace{0.25em}
    \begingroup
    \setlength{\tabcolsep}{2.5pt}%
    \renewcommand{\arraystretch}{0.86}%
    \scriptsize
    \begin{tabular}{lccccccccc}
    \toprule
    PTN
    & \multicolumn{2}{c}{$(\mathbf{R}_{x})$}
    & \multicolumn{3}{c}{$(\mathbf{RD}_{x})$}
    & \multicolumn{2}{c}{$(\mathbf{RD}_{x}^{\text{transfer}})$}
    & \multicolumn{2}{c}{$(\mathbf{RD}_{x}^{\text{under}})$} \\
    \cmidrule(lr){2-3}\cmidrule(lr){4-6}\cmidrule(lr){7-8}\cmidrule(lr){9-10}
      & Feas & aG(\%) & aI(\%) & aG(\%) & TtBS & aI(\%) & aG(\%) & aI(\%) & aG(\%) \\
    \midrule
    $SF$   & 10 & 41.4 & 18.1 & 20.7 & 55.7  & 17.9 & 9.6 & 18.0 & 6.9 \\
    G\"oVB & 1 & 61.2 & - & - & - & 26.5 & 13.0 & 26.5 & 11.3 \\
    \bottomrule
    \end{tabular}
    \endgroup
  \end{minipage}
\end{figure}

\subsection{Frontier bound}
\label{ssec:frontier_bound_comp}
Across the synthetic and real-world instances above, the SOCP relaxations consistently produced primal solutions only slightly worse than those found by Gurobi, while providing substantially stronger lower bounds. On the small instances where Gurobi certifies optimality, these results show that the relaxation incumbents are much closer to optimal than their SOCP certificates suggest. This observation motivates the frontier bound developed in Section~\ref{sec:convex_frontier_bound}. Figure~\ref{fig:frontier_residual_staircase} illustrates the adaptive staircase construction on a $3\times 3$ grid instance. The initial staircase is shown in blue, and the refined staircase is shown in green. Table~\ref{tab:frontier_gaps} reports the resulting optimality gaps across the benchmark instances.

\begin{figure}[htbp]
  \centering
  \makebox[\textwidth][c]{%
  \begin{minipage}[t]{0.43\textwidth}
    \centering
    \vspace{0pt}
    \includegraphics[width=0.8\linewidth]{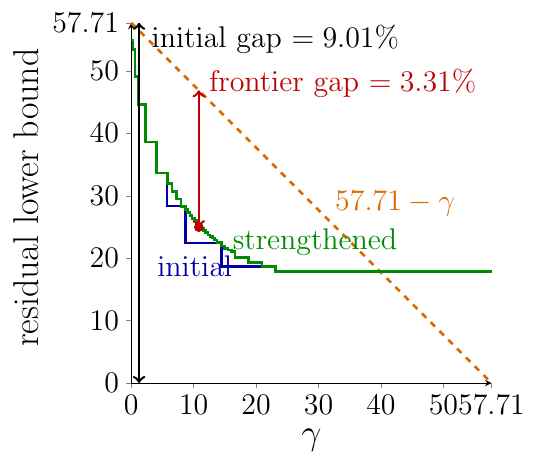}
    \vspace{-0.35em}
    \captionof{figure}{Frontier staircase bound.}
    \label{fig:frontier_residual_staircase}
  \end{minipage}%
  \hspace{0.045\textwidth}%
  \begin{minipage}[t]{0.43\textwidth}
    \centering
    \vspace{0pt}
    \captionof{table}{Optimality gaps (\%).}
    \label{tab:frontier_gaps}
    \vspace{0.25em}
    \begingroup
    \setlength{\tabcolsep}{3.0pt}%
    \renewcommand{\arraystretch}{0.9}%
    \scriptsize
    \begin{tabular}{lccc}
    \toprule
    Instance & $(\mathbf{RD}_{x}^{\text{transfer}})$ & $(\mathbf{RD}_{x}^{\text{under}})$ & Frontier \\
    \midrule
    $K_{4}$  & 11.41 & 4.38 & \textbf{0.06} \\
    $K_{6}$  & 11.80 & 5.78 & \textbf{0.14} \\
    $K_{9}$  & 9.01  & 5.84 & \textbf{3.31} \\
    $K_{12}$ & 8.88  & 4.92 & \textbf{3.56} \\
    $K_{16}$ & 6.72  & 4.55 & \textbf{3.91} \\
    SF       & 8.99  & 6.80 & \textbf{3.61} \\
    G\"oVB    & 13.0  & 11.3 & \textbf{5.11} \\
    \bottomrule
    \end{tabular}
    \endgroup
  \end{minipage}%
  }
\end{figure}

On the smallest instances, the frontier bound nearly closes the gap, certifying final gaps of 0.06\% and 0.14\%. As instances grow, the final gaps increase to 3--5\%, but remain consistently smaller than the gaps from both SOCP relaxations. Finally, passing the slightly suboptimal SOCP incumbent to a local NLP solver recovers the same primal solution found by Gurobi, including the solutions that Gurobi certifies as optimal on small instances. These results support the alignment intuition behind the frontier analysis that the residual term is strongly aligned with the convex objective and only modestly shifts the minimizer.

\subsection{Sensitivity analysis of handling times}
\label{ssec:congestion}
We next vary the per-parcel loading and unloading times, $\alpha$ and $\beta$, to assess how handling intensity affects routing quality, relaxation strength, and path choice. Table~\ref{tab:alpha_bounds_congestion} reports results for the G\"oVB instance in Figure~\ref{gob}, comparing the baseline $(\mathbf{R}_{x})$ with $(\mathbf{RD}_{x}^{\text{under}})$. For each setting, we report lower and upper bounds, the optimality gap, and the fraction of each period spent handling rather than driving.

\begin{table}[t]
\centering
\begingroup
\setlength{\tabcolsep}{4.5pt}%
\renewcommand{\arraystretch}{1.05}%
\small
\begin{tabular}{c cccc cccc}
\toprule
$\alpha \& \beta$ &
\multicolumn{4}{c}{$(\mathbf{R}_{x})$} &
\multicolumn{4}{c}{$(\mathbf{RD}^{\mathrm{under}}_{x})$} \\
\cmidrule(lr){2-5}\cmidrule(lr){6-9}
& LB & UB & Gap & Handling (\%) & LB & UB & Gap & Handling (\%) \\
\midrule
0.1    & 429.5     &481.4     &10.8     & 8.1          & 467.4    & 478.2    & 2.26    & 8.0           \\
0.2    & 429.5     &550.2     &21.9     & 16.0          & 512.5    & 535.8    & 4.35     & 16.0           \\
0.3    & 429.5     &647.5     &33.7     & 24.2          & 566.8    & 605.5    & 6.40    & 24.0          \\
0.4    & 429.5     &800.9     &46.4     & 32.2          & 635.3     & 694.7    & 8.55     & 32.0           \\
0.5    & 429.5     &1105.9     &61.2     & 40.2           & 725.6    & 813.2    & 10.8    & 40.0          \\
0.6    & 429.5     & --     & --     & --           & 852.7    & 980.7    & 13.1    & 48.1          \\
0.7    & 429.5     & --     & --    & --           & 1047.3    & 1238.9    & 15.5     & 56.4          \\
0.8  & -- & -- & --& -- & --& --& --& -- \\
\bottomrule
\end{tabular}
\caption{Performance for increasing handling times for the G\"oVB PTN shown in Figure~\ref{gob}.}
\label{tab:alpha_bounds_congestion}
\endgroup
\end{table}

Two patterns emerge. First, the value of modeling handling increases with $\alpha$ and $\beta$: the baseline gap widens and eventually becomes infeasible, while $(\mathbf{RD}_{x}^{\text{under}})$ continues to find feasible routings with moderate gaps. Second, the relaxation gap also increases with handling intensity because the nonconvex residual becomes more important. For example, the gap is 2.26\% when vehicles spend 8\% of the period handling, and 15.5\% when handling reaches 56.4\%. At $\alpha=\beta=0.8$, RLN-HT is infeasible regardless of the routing model.

The improvement is not driven by reducing total handling time. The handling share is nearly unchanged across the baseline and endogenous solutions, suggesting that the benefit comes from better spatial dispersion of demand. To isolate this effect, Figure~\ref{fig:last_fig} considers the Sioux Falls line network with infinite vehicle capacity. As $\alpha$ and $\beta$ increase, more OD pairs split demand across multiple paths, and the resulting aTT grows much more slowly than under the baseline routing. This reflects the role of dwell time as an endogenous, costly capacity expansion: rerouting some flow can be better than adding more dwell time on already-used lines.

\begin{figure}[htbp]
    \centering
    \begin{subfigure}[t]{0.49\linewidth}
        \centering
        \includegraphics[width=\linewidth]{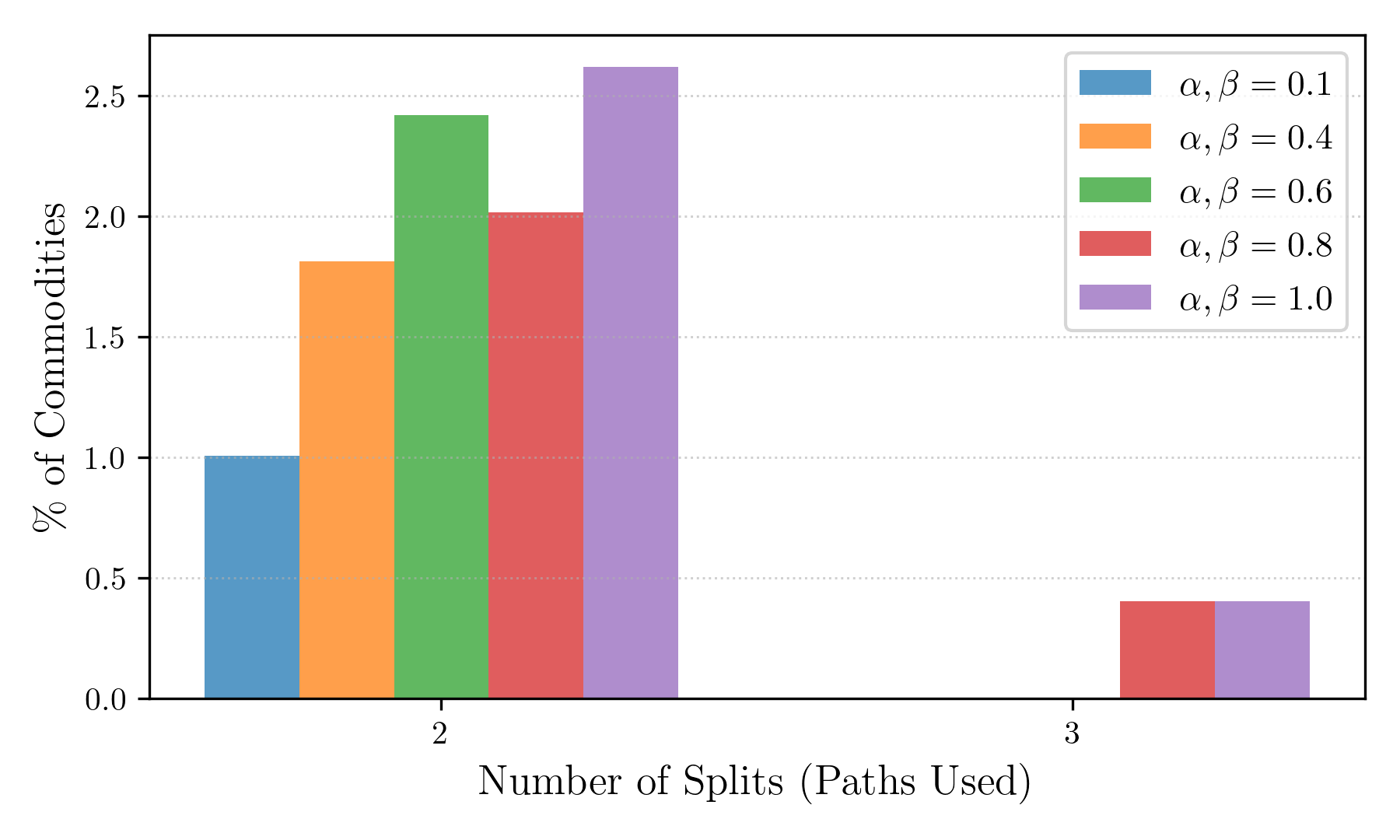}
        \caption{Path distribution}
        \label{fig:path_distribution}
    \end{subfigure}
    \begin{subfigure}[t]{0.49\linewidth}
        \centering
        \includegraphics[width=\linewidth]{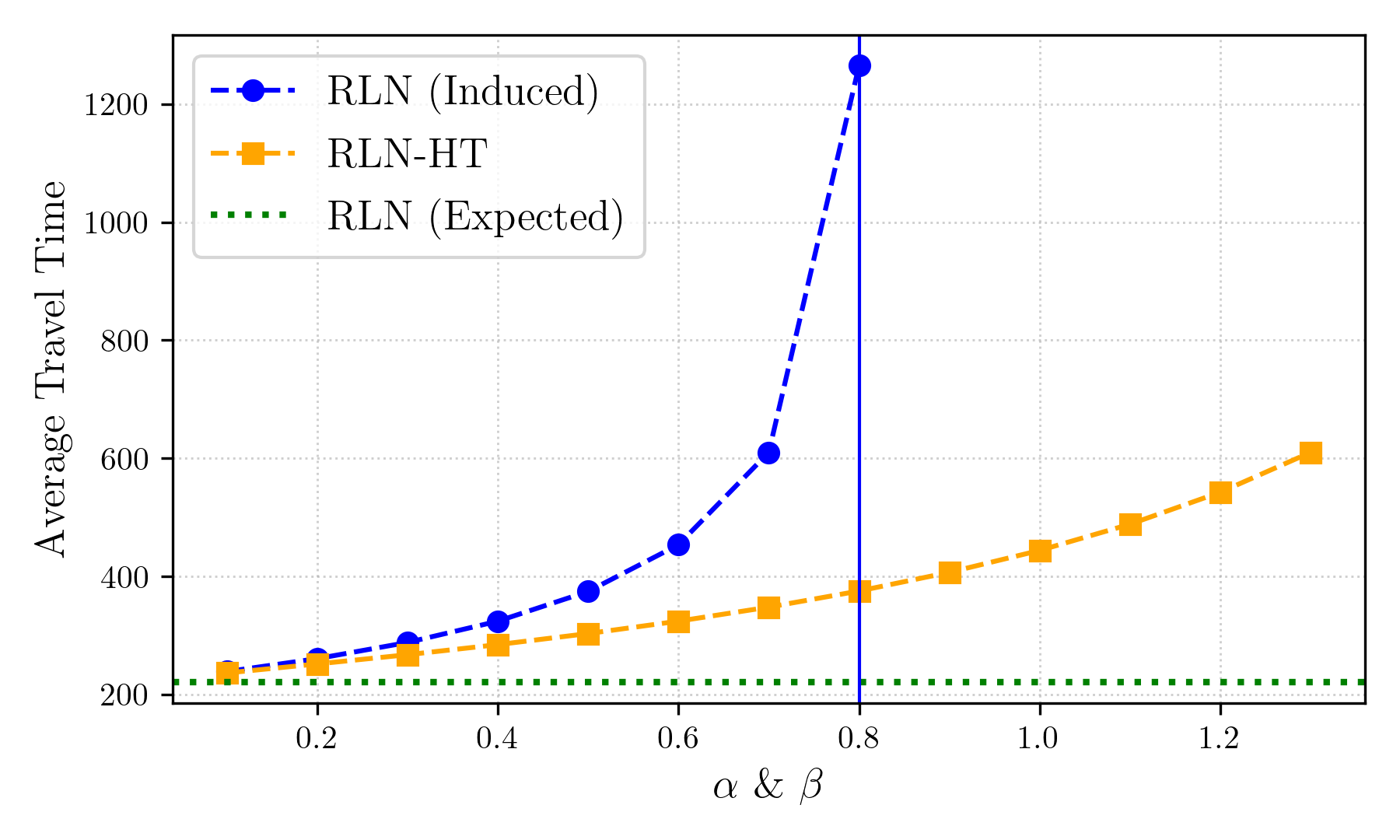}
        \caption{Average travel times}
        \label{fig:RLNHT_benefit}
    \end{subfigure}
    \caption{Distribution of paths and aTT for increasing per-parcel handling times $\alpha$ \& $\beta$ values for the line network in Figure~\ref{fig:sioux_falls_example} (Sioux Falls).}
    \label{fig:last_fig}
\end{figure}

%% file: sections/07-conclusion.tex
\section{Concluding Remarks}
\label{sec:conclusion}
Motivated by hyperconnected urban parcel networks, we studied the routing oracle that arises after a set of recurring inter-hub services has been fixed. We introduced RLN-HT as a fixed-line routing problem with endogenous handling times, formulated it over a change-and-go network, and used optimality conditions to reduce it to a pure multicommodity flow formulation. The resulting model is nonconvex and nonseparable, but its objective decomposes into a convex component and a structured nonconvex residual. Exploiting this structure, we developed two SOCP relaxations that preserve the original routing feasible region and therefore return feasible routing solutions together with valid lower bounds.

Computational experiments on 15 synthetic and 2 real-world PTNs show that RLN-HT is a difficult nonconvex routing problem, but also that it can serve effectively as a scalable oracle. The SOCP relaxations solve quickly, scale to instances with up to 257 nodes and 25 lines, and typically return near-optimal primal solutions with immediate certificates in the 5--10\% range. The residual-frontier analysis further exploits the empirical alignment between the convex objective and the residual term, reducing certificates to 5.11\% or below on the tested instances when additional endpoint-bound computations are performed. The experiments also show that ignoring handling times can produce poor or infeasible routings, while endogenously accounting for handling improves routing quality by 26.5\% on real-world instances, primarily by redistributing demand across multiple paths.

Three directions for future work are especially promising. First, the oracle developed here can be embedded within the full line-network design problem for urban parcel systems, where both line networks and parcel routes must be chosen. Second, it would be valuable to develop scalable oracles for online routing and timetabling, and to incorporate frequency-dependent operating costs such as fuel consumption. Such costs introduce a tradeoff between service quality and operating expenditure that may favor dwell times beyond handling requirements, requiring further analysis to obtain a pure-flow reformulation. Third, the residual-frontier framework suggests a broader approach for certifying problems with aligned convex and residual objective components. Understanding when this structure arises and how to strengthen the resulting frontier bounds may yield useful lower-bound methods beyond RLN-HT.

%% file: appendices.tex
\section{Proofs}\label{app:proofs}
\begin{proof}[Proof of Proposition \ref{prop:1}]
Let $(\mathbf{x}^*,\Delta^*)$ be an optimal solution to $(\mathbf{RD}_{x,\Delta})$. Keeping $\mathbf{x}^*$ fixed, choose $\Delta^*$ to minimize total dwell time among dwell-time vectors preserving feasibility and optimality. Suppose that for some line $l\in\mathcal{L}$ and node $i\in\mathcal{N}(l)$, constraint~\eqref{eq:RD_form_1_const_4} has slack, that is,
\[
\sum_{k\in\mathcal{D}}\bigl(\alpha t_i^{*kl}+\beta u_i^{*kl}\bigr)
<
\Delta_i^{*l}\, a_l \frac{T}{C_l+\sum_{j\in\mathcal{N}(l)}\Delta_j^{*l}} .
\]
In particular, the right-hand side is positive, so $\Delta_i^{*l}>0$. Consider a perturbation that decreases $\Delta_i^{*l}$ slightly by setting
\[
\hat{\Delta}_i^{l}=\Delta_i^{*l}-\epsilon,\qquad \epsilon\in(0,\Delta_i^{*l}),
\]
and keep all other variables unchanged, i.e., $(\hat{\mathbf{x}},\hat{\Delta})=(\mathbf{x}^*,\Delta^*)$ except at $(i,l)$.

This perturbation does not affect flow conservation. Moreover, any constraints whose right-hand side depends on the line cycle time through
$C_l+\sum_{j\in\mathcal{N}(l)}\Delta_j^{l}$
become no tighter, since this denominator decreases when $\Delta_i^{l}$ is reduced. For constraint~\eqref{eq:RD_form_1_const_4} at $(i,l)$, the left-hand side is unchanged, while the right-hand side varies continuously in $\Delta_i^{l}$. Since the constraint is slack at $(\mathbf{x}^*,\Delta^*)$, it remains satisfied for all sufficiently small $\epsilon>0$. Hence, $(\hat{\mathbf{x}},\hat{\Delta})$ is feasible.

Finally, the objective is nondecreasing in each dwell time, so this perturbation cannot increase its value. The perturbed solution therefore remains optimal but has strictly smaller total dwell time, contradicting our choice of $\Delta^*$. Thus, all handling constraints are tight in the chosen optimal solution.

\end{proof}

\begin{proof}[Proof of Proposition \ref{prop:2}]
For necessity, suppose $\boldsymbol{\Delta}$ is feasible. Write $H_l=\sum_{i\in\mathcal{N}(l)} h_i^l$ and $f_l=Ta_l/(C_l+\sum_{i\in\mathcal{N}(l)}\Delta_i^l)$. Summing the handling constraints gives
\[
H_l\le f_l\sum_i\Delta_i^l =Ta_l-C_lf_l<Ta_l.
\]
Hence $f_l\le (Ta_l-H_l)/C_l$, and the vehicle-capacity constraints imply \eqref{eq:new_cap_const_prop}. Conversely, suppose
\eqref{eq:new_const_prop}--\eqref{eq:new_cap_const_prop} hold. Consider constraint~\eqref{eq:RD_form_1_const_4} at equality and sum it over a line $l \in \mathcal{L}$:
\begin{align}
\sum_{i \in \mathcal{N}(l)} \sum_{k \in \mathcal{D}} \left(\alpha t_{i}^{kl} + \beta u_{i}^{kl}\right) = \sum_{i \in \mathcal{N}(l)}\Delta_{i}^{l} a_{l} \frac{T}{C_{l} + \sum_{j \in \mathcal{N}(l)} \Delta_{j}^{l}}
\label{eq:appendix_dwell_sum}
\end{align}
Factoring out $\sum_{q \in \mathcal{N}(l)} \Delta_{q}^{l}$ gives the total dwell time along the line:
\begin{align*}
\sum_{q \in \mathcal{N}(l)} \Delta_{q}^{l} = \frac{C_{l} H_l}{T a_{l} - H_l}.
\end{align*}
This expression is valid only when the denominator is strictly positive, which gives condition~\eqref{eq:new_const_prop}. Substituting the total dwell time back into the equality for hub $i \in \mathcal{N}(l)$ gives
\begin{align*}
    h_{i}^{l} = \Delta_{i}^{l} \frac{Ta_{l}}{C_{l} + \frac{C_{l} H_l}{T a_{l} - H_l}} = \Delta_{i}^{l} \frac{Ta_{l} - H_l}{C_{l}}.
\end{align*}
Rearranging yields~\eqref{eq:opt_delta}. Substituting the same expression for the line frequency into the vehicle-capacity constraints gives~\eqref{eq:new_cap_const_prop}. For any feasible dwell-time vector, $\Delta_i^l\ge h_i^l/f_l \ge C_lh_i^l/(Ta_l-H_l)=\Delta_i^{*l}$. Since the objective is nondecreasing in each dwell time, the displayed vector minimizes total travel time for the fixed flow.
\end{proof}

\begin{proof}[Proof of Proposition \ref{prop:x_lb}]
Let $x^\star=(d^\star,t^\star,u^\star)$ be optimal. We first show that one never boards and alights simultaneously for the same $(i,k,l)$. Suppose $t_{i}^{kl\,\star}>0$ and $u_{i}^{kl\,\star}>0$ for some $(i,k,l)$, and set $\varepsilon=\min\{t_{i}^{kl\,\star},u_{i}^{kl\,\star}\}>0$. Define $\tilde x$ by decreasing both variables at $(i,k,l)$, i.e., $\tilde t_{i}^{kl}=t_{i}^{kl\,\star}-\varepsilon$, $\tilde u_{i}^{kl}=u_{i}^{kl\,\star}-\varepsilon$, and leaving all other variables unchanged.

This preserves feasibility. The node-balance and line-flow equalities depend on $t_{i}^{kl}-u_{i}^{kl}$, which is unchanged. The dwell-time inequality depends on $\sum_{i,k}(\alpha t_i^{kl}+\beta u_i^{kl})$ and therefore becomes looser. The capacity inequality depends on the same sum of $t$-variables and its right-hand side weakly increases when $t_{i}^{kl}$ is reduced. Nonnegativity holds by the choice of $\varepsilon$.

Moreover, the objective is strictly increasing in each $t_{i}^{kl}$ and $u_{i}^{kl}$ on the feasible set, since these variables enter the handled-volume terms $H_l=\sum_{i,k}(\alpha t_i^{kl}+\beta u_i^{kl})$ with positive coefficients, and the corresponding cost terms are increasing in $H_l$ (the linear travel cost is independent of $t$ and $u$). Hence $F(\tilde x)<F(x^\star)$, contradicting optimality. Therefore, for all $(i,k,l)$, $t_{i}^{kl\,\star}>0$ implies $u_{i}^{kl\,\star}=0$, and $u_{i}^{kl\,\star}>0$ implies $t_{i}^{kl\,\star}=0$.

Fix any $(i,k,l)$. Flow conservation~\eqref{eq:RD_form_3_const_2} gives
$d_{i_- i}^{kl\,\star}+t_{i}^{kl\,\star}=d_{i i_+}^{kl\,\star}+u_{i}^{kl\,\star}$.
If $u_{i}^{kl\,\star}>0$, then $t_{i}^{kl\,\star}=0$ and thus
$d_{i_- i}^{kl\,\star}=d_{i i_+}^{kl\,\star}+u_{i}^{kl\,\star}\ge u_{i}^{kl\,\star}$.
If $u_{i}^{kl\,\star}=0$, the inequality $d_{i_- i}^{kl\,\star}\ge u_{i}^{kl\,\star}$ is trivial by nonnegativity. Hence $d_{i_- i}^{kl\,\star}\ge u_{i}^{kl\,\star}$ for all $(i,k,l)$, and summing over $k$ yields $X_{i}^{l\,\star}\ge U_{i}^{l\,\star}$ for all $i\in\mathcal{N}(l)$ and $l\in\mathcal{L}$.
\end{proof}

\begin{proof}[Proof of Theorem \ref{thm:frontier_gap}]
For any $x \in \Xset$, define $\gamma(x) \coloneqq g(x) - g_{c} \geq 0$. Then $x$ is feasible for the definition of $m(\gamma(x))$, and therefore
\[
r(x) \geq m(\gamma(x)).
\]
Thus
\[
F^{*} \geq g_{c} + \min_{\gamma \geq 0} \{\gamma + m(\gamma)\}.
\]
Conversely, for any $\gamma \geq 0$ and any $\eta>0$, choose an $\eta$-optimal solution $x_{\gamma}^{\eta}$ for $m(\gamma)$, so that
$r(x_{\gamma}^{\eta}) \le m(\gamma)+\eta$ and $g(x_{\gamma}^{\eta})\le g_c+\gamma$. Then
\[
F(x_{\gamma}^{\eta}) = g(x_{\gamma}^{\eta}) + r(x_{\gamma}^{\eta}) \leq g_{c} + \gamma + m(\gamma)+\eta.
\]
Letting $\eta\downarrow0$ and minimizing over $\gamma$ gives the reverse inequality. Since $x_{c}$ is feasible, $F^{*} \leq F(x_c)=g_{c} + r_{c}$. If $\gamma > r_{c}$, then $\gamma + m(\gamma) \geq \gamma > r_{c}$, so such values of $\gamma$ cannot improve on the incumbent. Hence the minimization can be restricted to $[0,r_{c}]$. Subtracting from $F_{c} = g_{c} + r_{c}$ gives the second result.
\end{proof}

\clearpage

\section{Infeasibility and Frontier Bound Details}\label{app:frontier}

\paragraph{Baseline infeasibility}
The handling-ignorant baseline can become infeasible once handling times are imposed because handling affects the effective capacity of each line. Figure~\ref{fig:baseline_feasible} shows a $P_{16}$ line network for which the baseline routing remains feasible after handling times are accounted for. In this case, the handling workload induced by the baseline routing remains sufficiently dispersed, so all temporal and effective-capacity constraints are satisfied. By contrast, Figure~\ref{fig:physical_cap_failure} shows a $P_{16}$ line network for which the baseline routing becomes infeasible. The baseline routing satisfies the nominal physical capacity constraints, but handling causes line~8 to spend $1333$ of its $1440$ available time units handling parcels, reducing the effective capacity of arc $(1,6)$ to $146.3$, below the $536$ units routed on that arc. Lastly, for comparison, a line with one vehicle, $T=60$, and a total handling workload of $80$ time units is temporally infeasible, regardless of vehicle capacity.

\begin{figure}[H]
    \centering
    \begin{subfigure}[t]{0.48\linewidth}
        \centering
        \includegraphics[width=0.7\linewidth]{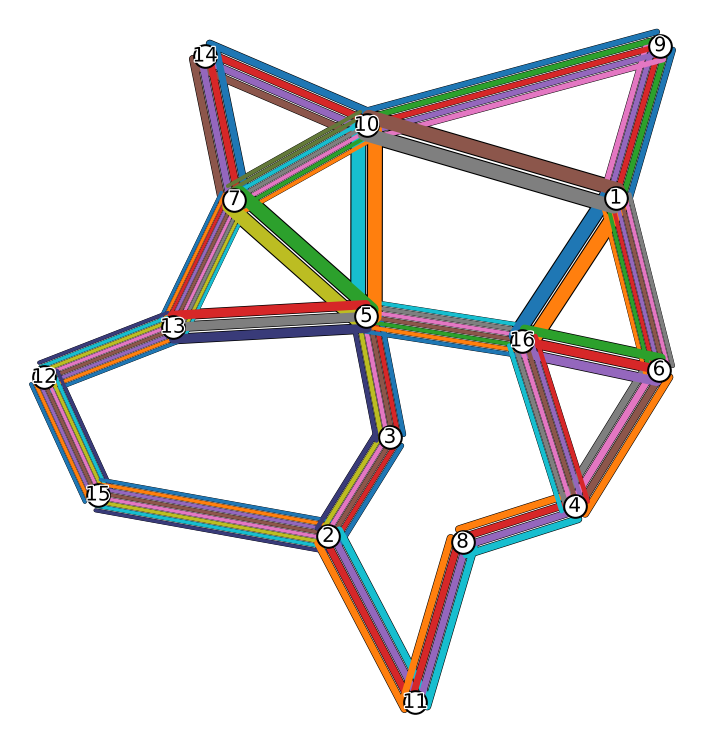}
        \caption{Feasible baseline}
        \label{fig:baseline_feasible}
    \end{subfigure}
    \hfill
    \begin{subfigure}[t]{0.48\linewidth}
        \centering
        \includegraphics[width=0.7\linewidth]{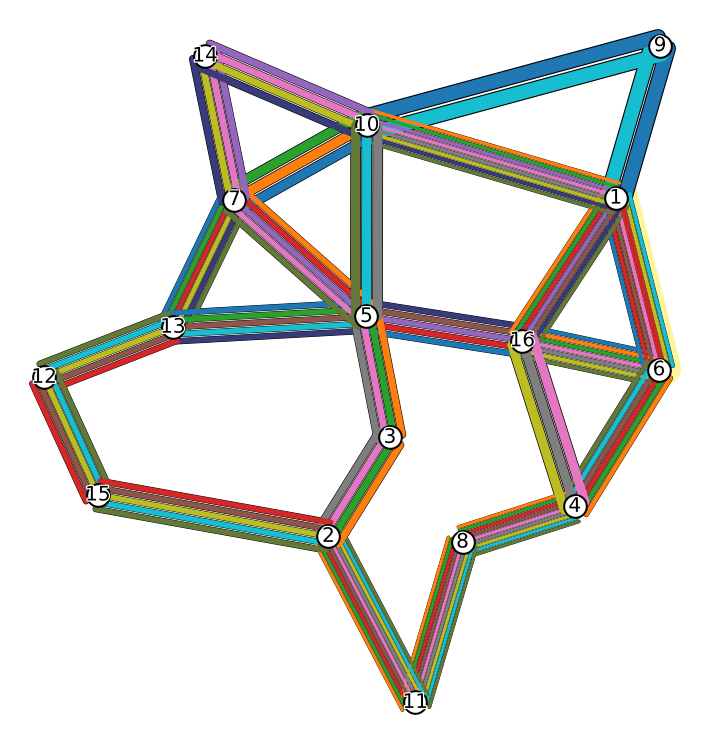}
        \caption{Infeasible baseline due to arc capacity overload.}
        \label{fig:physical_cap_failure}
    \end{subfigure}
    \caption{Two different line concepts admitting feasible and infeasible baseline routing solutions once handling is accounted for.}
    \label{fig:baseline_infeasibility_examples}
\end{figure}

\subsection*{The strengthened SDP relaxation}
\label{app:complete_endpoint}
As outlined in Section~5, for an endpoint $\gamma_b$, we begin with the following SDP relaxation of the localized endpoint problem:
\begin{subequations}
\label{model:appendix_base_sdp}
\begin{align}
    \min\quad
    &\sum_{l\in\mathcal L} C_l\langle M_l,Z_l\rangle
      \label{eq:appendix_base_obj}\\
    \text{s.t.}\quad
    &x\in\Xset_b^{F_c},\quad z_l=E_lx,
      \quad y_l=Ta_l-e_l^\top z_l,
      &&l\in\mathcal L,
      \label{eq:appendix_base_link}\\
    &Ta_ls_l-e_l^\top w_l=1,
      \quad Ta_lw_l-Z_le_l=z_l,
      &&l\in\mathcal L,
      \label{eq:appendix_base_identities}\\
    &\begin{pmatrix}Z_l&w_l\\w_l^\top&s_l\end{pmatrix}\succeq0,
      &&l\in\mathcal L.
      \label{eq:appendix_base_psd}
\end{align}
\end{subequations}
The flow vector $x$ describes the routing, while $z_l,y_l,s_l,w_l$ and the symmetric matrix $Z_l$ are auxiliary variables associated with each line. Here, $z_l=(\boldsymbol h_l,\boldsymbol X_l)=E_lx$ collects the handling workloads and incoming onboard flows at the hubs of line $l$, and $e_l$ selects the handling coordinates so that $e_l^\top z_l$ is their sum. As in Section~5, we can express the bilinear numerator $\boldsymbol h_l^\top\boldsymbol X_l$ in terms of $z_l$ using the following matrix $M_l$:
\[
M_l=\frac12\begin{pmatrix}0&I\\I&0\end{pmatrix},
\qquad
z_l^\top M_lz_l=\boldsymbol h_l^\top\boldsymbol X_l,
\]
where $I$ is the identity matrix of dimension $|\mathcal N(l)|$. At an exact lift, $s_l=1/y_l$, $w_l=z_l/y_l$, and $Z_l=z_lz_l^\top/y_l$, so the lifted objective equals $r(x)$. The base SDP retains the positive semidefinite constraint and the linear identities implied by these relations, but drops the rank-one requirement. Consequently, the lifted variables need not satisfy these product identities, allowing the objective to underestimate the residual cost.

To limit deviations from these identities, we use McCormick envelopes. For instance, consider $w_{lj}=z_{lj}/y_l$, or equivalently $z_{lj}=y_lw_{lj}$. While this identity need not hold in the relaxed SDP, we can add the following constraints, where an underline and overline denote a variable's lower and upper bounds, respectively:

\begin{equation}
  \label{eq:appendix_fractional_mc}
  \begin{aligned}
  z_{lj}&\ge\underline y_l w_{lj}+\underline w_{lj}y_l
                   -\underline y_l\underline w_{lj},\\
  z_{lj}& \ge\overline y_l w_{lj}+\overline w_{lj}y_l
                   -\overline y_l\overline w_{lj},\\
  z_{lj}& \le\overline y_l w_{lj}+\underline w_{lj}y_l
                   -\overline y_l\underline w_{lj},\\
  z_{lj}& \le\underline y_l w_{lj}+\overline w_{lj}y_l
                   -\underline y_l\overline w_{lj}.
  \end{aligned}
  \end{equation}

We impose these inequalities for all $l\in\mathcal L$ and every coordinate $j$ of $z_l$. We similarly impose McCormick envelopes for $Z_{ljk}=z_{lj}w_{lk}$ for every ordered pair $(j,k)$. Since $Z_l$ is symmetric, this also imposes the envelopes for $Z_{ljk}=z_{lk}w_{lj}$. Naturally, these McCormick inequalities become stronger as the variable bounds tighten. We therefore perform optimization-based bound tightening (OBBT) on $y_l$ and the coordinates of $z_l$ by minimizing and maximizing them over convex relaxations that include the endpoint budget and incumbent cutoff. A strong incumbent helps restrict the region over which these bounds are computed. We impose the resulting bounds and retain the bounds on boarding and unloading aggregates from preliminary tightening. Since $z_l$ is nonnegative and the denominator bounds are strictly positive, we also obtain bounds on the fractional variables:
\begin{equation}
\label{eq:appendix_fractional_bounds}
\begin{aligned}
&0<\underline y_l\le y_l\le\overline y_l,\qquad
  \underline z_{lj}\le z_{lj}\le\overline z_{lj},\\
&\frac1{\overline y_l}\le s_l\le\frac1{\underline y_l},\qquad
  \underline w_{lj}:=\frac{\underline z_{lj}}{\overline y_l}
  \le w_{lj}\le\frac{\overline z_{lj}}{\underline y_l}=:\overline w_{lj}.
\end{aligned}
\end{equation}

In addition, Section~4 gives a lower bound for $r_l(x)$ based on the unloading flow $U_i^l=\sum_k u_i^{kl}$. We retain the restriction $U_j^l\le X_{ij}^l$ on each incoming line arc $(i,j)$, which preserves an optimum as discussed in Section~4, and add
\begin{equation}
\label{eq:appendix_residual_bounds}
\langle M_l,Z_l\rangle\ge0,\qquad
\langle M_l,Z_l\rangle\ge
\frac{\beta}{y_l}\sum_{i\in\mathcal N(l)}(U_i^l)^2,
\qquad l\in\mathcal L.
\end{equation}
This ensures that the lifted residual is bounded below by the convex underestimator used in the SOCP relaxation. After adding the cuts described in Section~5, we get the following SDP that we solve in practice.
\begin{equation}
\label{eq:complete_endpoint}
\begin{aligned}
\ell_b=\min_{x,z,y,s,w,Z}\quad
 &\sum_{l\in\mathcal L}C_l\langle M_l,Z_l\rangle\\
\mathrm{s.t.}\quad
 &\text{base constraints \eqref{eq:appendix_base_link}--\eqref{eq:appendix_base_psd}},\\
 &\text{variable bounds described above, including \eqref{eq:appendix_fractional_bounds}},\\
 &\text{envelopes \eqref{eq:appendix_fractional_mc} and the analogous envelopes for }Z_{ljk},\\
 &U_j^l\le X_{ij}^l\quad((i,j)\in\mathcal A(l),\ l\in\mathcal L),\\
 &\text{residual inequalities \eqref{eq:appendix_residual_bounds}},\\
 &\ell_{lq}^{(b)}\le q^\top z_l\le u_{lq}^{(b)},\\
 &\min\left\{\frac{\ell_{lq}^{(b)}}{\underline y_l},
             \frac{\ell_{lq}^{(b)}}{\overline y_l}\right\}
   \le q^\top w_l\le
   \max\left\{\frac{u_{lq}^{(b)}}{\underline y_l},
             \frac{u_{lq}^{(b)}}{\overline y_l}\right\},\\
 &\text{McCormick envelopes for }q^\top z_l=y_l(q^\top w_l),\\
 &q^\top Z_lq\le
   (\ell_{lq}^{(b)}+u_{lq}^{(b)})q^\top w_l
   -\ell_{lq}^{(b)}u_{lq}^{(b)}s_l,\\
 &f_{\mathrm{lin}}(x)+
   \frac{T}{2(\alpha+\beta)}\sum_l C_l(Ta_ls_l-1)\le g_c+\gamma_b,\\
 &f_{\mathrm{lin}}(x)+
   \frac{T}{2(\alpha+\beta)}\sum_l C_l(Ta_ls_l-1)
   +\sum_l C_l\langle M_l,Z_l\rangle\le F_c.
\end{aligned}
\end{equation}
The constraints apply to every relevant line, coordinate, and selected direction. At an exact lift, the last two inequalities are the endpoint budget $g(x)\le g_c+\gamma_b$ and incumbent cutoff $F(x)\le F_c$.

For all experiments, we begin with \(B=12\) intervals and focus refinement on the weakest stair, its neighbors, and other intervals close to the minimum. In each adaptive round, we either split the selected intervals into four or strengthen their endpoint SDPs by generating up to three additional directions for each of up to three selected lines. We apply OBBT to the new projections before adding their corresponding envelopes and cuts. The procedure stops after at most six adaptive rounds, or earlier if a strengthening pass produces negligible improvement in the bound. We solve the SDPs using MOSEK and the OBBT subproblems using Gurobi, with parallel OBBT where applicable.

\clearpage

\section{Additional Instances Information}\label{app:instances}
\begin{figure}[H]
\centering
  \begin{tikzpicture}[x=1cm,y=1cm,baseline=(current bounding box.center)]
\begin{scope}[shift={(1,0)}]
    \CSVGraphPanel{graphics/PTNs_latex/Grid_2x2/nodes.csv}{graphics/PTNs_latex/Grid_2x2/edges.csv}{$G_{2,2}$}
  \end{scope}
  \begin{scope}[shift={(3.2,0)}]
    \CSVGraphPanel{graphics/PTNs_latex/Grid_2x3/nodes.csv}{graphics/PTNs_latex/Grid_2x3/edges.csv}{$G_{2,3}$}
  \end{scope}
  \begin{scope}[shift={(5.4,0)}]
    \CSVGraphPanel{graphics/PTNs_latex/Grid_3x3/nodes.csv}{graphics/PTNs_latex/Grid_3x3/edges.csv}{$G_{3,3}$}
  \end{scope}
  \begin{scope}[shift={(7.6,0)}]
    \CSVGraphPanel{graphics/PTNs_latex/Grid_3x4/nodes.csv}{graphics/PTNs_latex/Grid_3x4/edges.csv}{$G_{3,4}$}
  \end{scope}
  \begin{scope}[shift={(9.8,0)}]
    \CSVGraphPanel{graphics/PTNs_latex/Grid_4x4/nodes.csv}{graphics/PTNs_latex/Grid_4x4/edges.csv}{$G_{4,4}$}
  \end{scope}
  \begin{scope}[shift={(1,-2.6)}]
    \CSVGraphPanel{graphics/PTNs_latex/Complete_2x2/nodes.csv}{graphics/PTNs_latex/Complete_2x2/edges.csv}{$K_{4}$}
  \end{scope}
  \begin{scope}[shift={(3.2,-2.6)}]
    \CSVGraphPanel{graphics/PTNs_latex/Complete_2x3/nodes.csv}{graphics/PTNs_latex/Complete_2x3/edges.csv}{$K_{6}$}
  \end{scope}
  \begin{scope}[shift={(5.4,-2.6)}]
    \CSVGraphPanel{graphics/PTNs_latex/Complete_3x3/nodes.csv}{graphics/PTNs_latex/Complete_3x3/edges.csv}{$K_{9}$}
  \end{scope}
  \begin{scope}[shift={(7.6,-2.6)}]
    \CSVGraphPanel{graphics/PTNs_latex/Complete_3x4/nodes.csv}{graphics/PTNs_latex/Complete_3x4/edges.csv}{$K_{12}$}
  \end{scope}
  \begin{scope}[shift={(9.8,-2.6)}]
    \CSVGraphPanel{graphics/PTNs_latex/Complete_4x4/nodes.csv}{graphics/PTNs_latex/Complete_4x4/edges_full.csv}{$K_{16}$}
  \end{scope}
  \begin{scope}[shift={(1,-5.2)}]
    \CSVGraphPanel{graphics/PTNs_latex/Planar_4/nodes.csv}{graphics/PTNs_latex/Planar_4/edges.csv}{$P_{4}$}
  \end{scope}
  \begin{scope}[shift={(3.2,-5.2)}]
    \CSVGraphPanel{graphics/PTNs_latex/Planar_6/nodes.csv}{graphics/PTNs_latex/Planar_6/edges.csv}{$P_{6}$}
  \end{scope}
  \begin{scope}[shift={(5.4,-5.2)}]
    \CSVGraphPanel{graphics/PTNs_latex/Planar_9/nodes.csv}{graphics/PTNs_latex/Planar_9/edges.csv}{$P_{9}$}
  \end{scope}
  \begin{scope}[shift={(7.6,-5.2)}]
    \CSVGraphPanel{graphics/PTNs_latex/Planar_12/nodes.csv}{graphics/PTNs_latex/Planar_12/edges.csv}{$P_{12}$}
  \end{scope}
  \begin{scope}[shift={(9.8,-5.2)}]
    \CSVGraphPanel{graphics/PTNs_latex/Planar_16/nodes.csv}{graphics/PTNs_latex/Planar_16/edges.csv}{$P_{16}$}
   \end{scope}
  \end{tikzpicture}%

\caption{PTNs of the 15 synthetic instances. \label{fig:grids}}
\end{figure}

\begin{figure}[H]
\centering
\begin{subfigure}[t]{0.48\textwidth}
  \centering
  \ptnimg{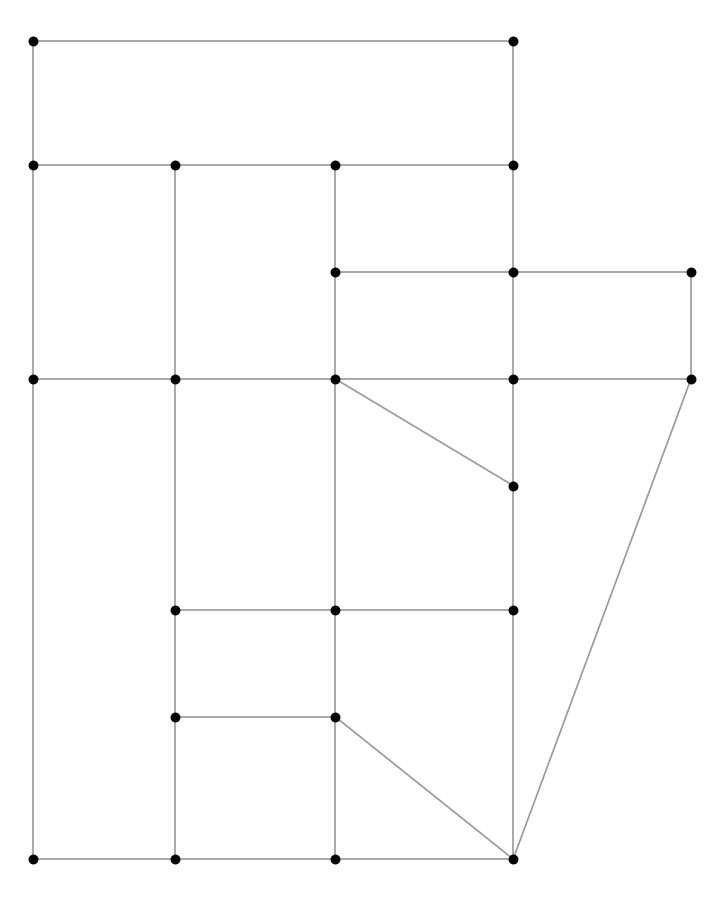}
  \caption{Sioux Falls (SF)}
\end{subfigure}\hfill
\begin{subfigure}[t]{0.48\textwidth}
  \centering
  \ptnimg{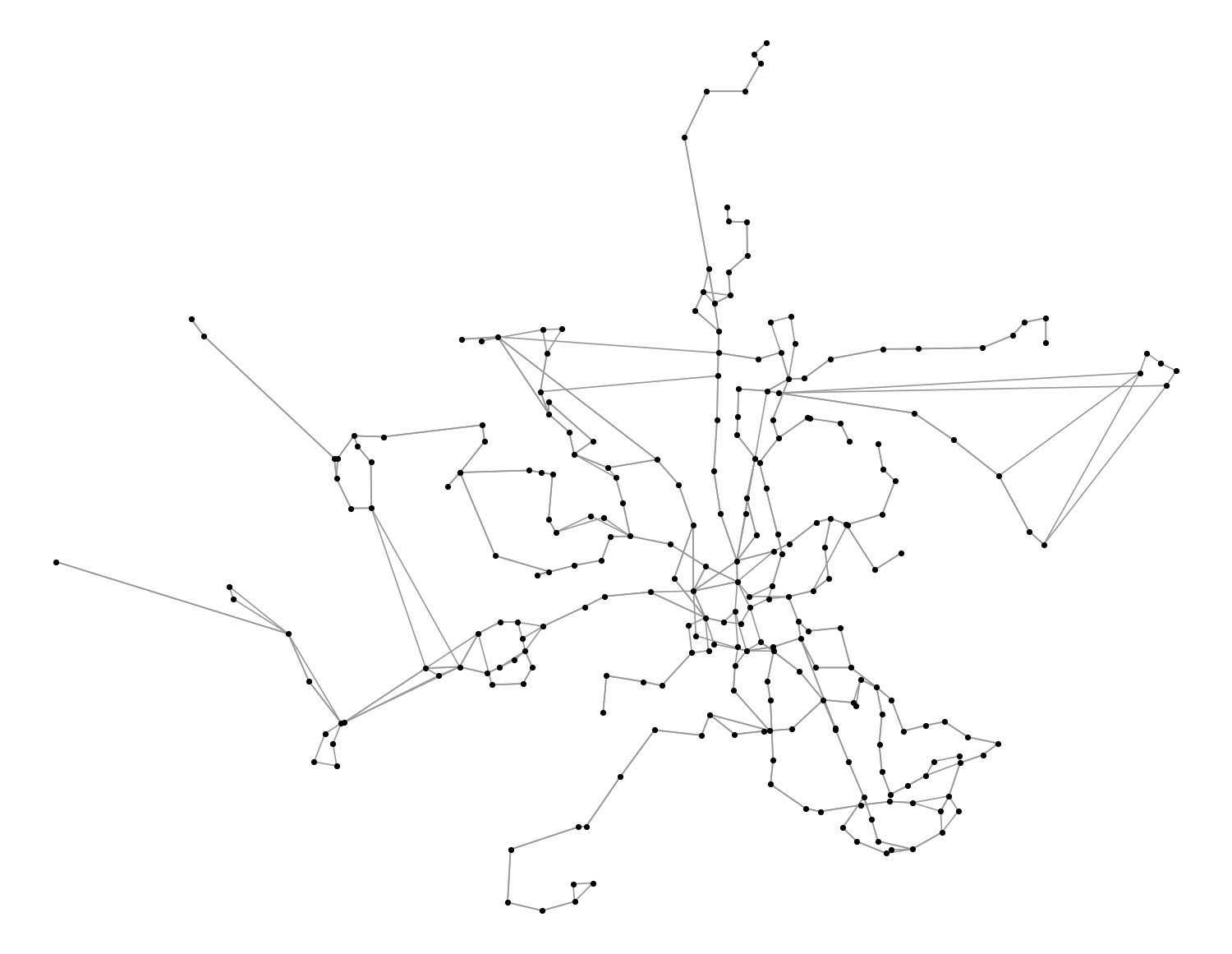}
  \caption{Göttingen (G\"oVB)}
\end{subfigure}
\caption{Real-world PTN instances used in the experiments.}
\label{fig:real_world_PTNs}
\end{figure}